\documentclass[11pt]{article}
\pdfoutput=1
\usepackage{amsbsy,amsmath,amsthm,amssymb,graphicx,verbatim}
\usepackage{setspace, float}
\usepackage[longnamesfirst,sort]{natbib}

\usepackage{multirow}
\usepackage{subcaption}
\usepackage{hhline}

\usepackage{geometry}
\geometry{hmargin=1.025in,vmargin={1.25in,0.75in},nohead,footskip=0.5in}

\setlength{\baselineskip}{0.3in} \setlength{\parskip}{.05in}

\newtheorem{theorem}{Theorem}
\newtheorem{corollary}{Corollary}
\newtheorem{lemma}{Lemma}
\newtheorem{proposition}{Proposition}
\newtheorem{assumption}{Assumption}
\theoremstyle{remark}
\newtheorem{remark}{Remark}

\newfont{\msbm}{msbm10 at 11pt}

\setlength{\oddsidemargin}{0.25truein}
\setlength{\evensidemargin}{0.25truein}
\setlength{\textwidth}{6.0truein}
\setlength{\topmargin}{0.25truein}
\setlength{\textheight}{8.5truein}
\setlength{\headsep}{0.0truein}
\setlength{\headheight}{0.0truein}
\setlength{\topskip}{10.0pt}

\newcommand{\Var}[0]{\text{Var}}

\begin{document}
\onehalfspacing

\title{Weighted batch means estimators in Markov chain Monte Carlo}

\author{Ying Liu \\ Department of Statistics \\ University of California, Riverside \\ {\tt yliu055@email.ucr.edu}\and James M. Flegal \\ Department of Statistics \\ University of California, Riverside \\ {\tt jflegal@ucr.edu}   }  

\date{\today}
\maketitle
\begin{abstract}
This paper proposes a family of weighted batch means variance estimators, which are computationally efficient and can be conveniently applied in practice.  The focus is on Markov chain Monte Carlo simulations and estimation of the asymptotic covariance matrix in the Markov chain central limit theorem, where conditions ensuring strong consistency are provided.  Finite sample performance is evaluated through auto-regressive, Bayesian spatial-temporal, and Bayesian logistic regression examples, where the new estimators show significant computational gains with a minor sacrifice in variance compared with existing methods.
\end{abstract}

\section{Introduction}

Markov chain Monte Carlo (MCMC) methods are widely used to approximate expectations with respect to a target distribution, see e.g. \cite{liu:2001} and \cite{robe:case:2004}.  In short, an MCMC simulation generates a dependent sample from the target distribution and then uses ergodic averages to estimate a vector of expectations.  Variability of the ergodic averages is of interest because it reflects the quality of estimation and can be used to construct confidence intervals or confidence regions \citep[see e.g.][]{fleg:hara:jone:2008, fleg:jone:2011, geye:1992, vats:fleg:jone:2015output, jone:hobe:2001}.  Estimating variability is akin to estimation of the asymptotic covariance matrix in a multivariate Markov chain central limit theorem (CLT).

Let $F$ be a probability distribution with support $\mathsf{X} \in \mathbb{R}^{d}$ and $g: \mathsf{X}\rightarrow \mathbb{R}^{p}$ be a $F$-integrable function. We are interested in estimating the $p$-dimensional vector
\[
\theta = \int_{\mathsf{X}} g(x) dF.
\]
Let $X=\{X_{t}, t\geq 1\}$ be a Harris ergodic Markov chain with invariant distribution $F$.  Then if $Y_{t}=g(X_{t})$ for $t \ge 1$, $\bar{Y}_{n} = \dfrac{1}{n}\sum_{t=1}^{n} Y_{t}\rightarrow\theta$ w.p.\ 1 as $n\rightarrow\infty$.  The sampling distribution for $\bar{Y}_{n} - \theta$ is available via a Markov chain CLT if there exists a positive definite symmetric matrix $\Sigma$ such that 
\begin{equation} \label{eq:clt}
\sqrt{n}(\bar{Y}_{n}-\theta)\xrightarrow{d}N_{p}(0, \Sigma) \text{ as } n\rightarrow\infty,
\end{equation}
where
\[
\Sigma=\text{Var}_{F}(Y_{1})+\sum_{k=1}^{\infty}[\text{Cov}_{F}(Y_{1}, Y_{1+k})+\text{Cov}_{F}(Y_{1} , Y_{1+k})^T].
\]
Provided an estimator of $\Sigma$ is available, say $\hat{\Sigma}_n$, one can access variability of the estimator $\bar{Y}_{n}$ by constructing a $p$-dimensional ellipsoid.  Further, \cite{vats:fleg:jone:2015output} propose terminating the simulation when the ellipsoid volume is sufficiently small, which is asymptotically equivalent to stopping when a multivariate effective sample size is large enough.  One of their necessary conditions is that $\hat{\Sigma}_n$ is a strongly consistent estimator of $\Sigma$.

Outside of recent work of \cite{chan2017automatic}, \cite{dai:jone:2017}, \cite{vats:fleg:jone:2015output}, and \cite{vats:fleg:jone:2018}, estimating the covariance matrix is rarely done in MCMC.  Instead most practitioners focus on univariate techniques to estimate only the diagonal components.  An incomplete list of univariate estimators includes batch means (BM) and overlapping BM \citep{jone:hara:caff:neat:2006, fleg:jone:2010, meke:schm:1984}, spectral variance (SV) methods including flat top estimators \citep{ande:1994, poli:roma:1996, poli:roma:1995}, initial sequence estimators \citep{geye:1992}, recursive estimators of time-average variances \citep{wu2009recursive, yau2016new}, and regenerative simulation \citep{mykl:tier:yu:1995, hobe:jone:pres:rose:2002, seil:1982}.  Many of these univariate techniques can be extended to the multivariate setting, but practical challenges increase as the dimension increases.  

Within the MCMC literature, the most common approach is univariate BM since it is fast and simple to calculate.  Speedy calculations are especially helpful in conjunction with sequential stopping rules where multiple variances or a covariance matrix would be calculated each time a stopping criteria is checked \citep[see e.g.][]{fleg:hara:jone:2008, gong:fleg:2016}.  Unfortunately, \cite{fleg:jone:2010} and \cite{vats:fleg:jone:2015output} illustrate BM methods tend to underestimate confidence region volumes unless the number of Markov chain iterations is extremely large.  Practitioners familiar with the time-series literature may argue for more complex SV estimators using Tukey-Hanning or flat top lag windows.  Flat top windows are especially appealing since they tend to reduce bias leading to more accurate confidence region volumes.  Despite the popularity in fields where sample sizes are moderate, multivariate SV methods are challenging to use in MCMC since they require substantial computational effort for large sample sizes (see Section~\ref{sec:comptime}).

This paper introduces weighted BM variances estimators that are especially convenient in MCMC but are applicable in other fields such as time-series and nonparametric analysis.  The proposed estimators incorporate the same flexible lag windows of SV estimators while reducing computation time.  For example, we later show a weighted BM estimator is approximately 60 times faster for a $30 \times 30$ covariance matrix with $5e5$ iterations.  Moreover, the speed up increases as dimension or iteration increases.  

The cost one pays for computational efficiency is an increase in relative efficiency.  Specifically, we show the variance is 1.875 higher for a flat top lag window using weighted BM versus a traditional SV estimator.  Our result is similar to \cite{fleg:jone:2010} who show the variance of the BM estimator is 1.5 times higher than that of the overlapping BM estimator.

In addition to calculating relative efficiency, we prove strong consistency of weighted BM estimators.  Strong consistency is important since it is required for asymptotic validity of sequential stopping rules, see e.g.\ \cite{fleg:gong:2015}, \cite{glyn:whit:1992}, \cite{jone:hara:caff:neat:2006}, and \cite{vats:fleg:jone:2015output}.  In short, asymptotic validity implies the simulation terminates with probability one and ensures the final confidence regions have the right coverage probability.

The performance of weighted BM estimators is illustrated in univariate and multivariate auto-regressive models. These finite sample simulations show weighted BM estimators converge to the true known value and that flat top lag windows enjoy significant bias reduction.  As dimension or chain length increases, calculation of weighted BM estimators save significant time compared with SV estimators.  Our simulations also illustrate an increase in the variance relative to SV estimators, which depends lag window choice.  

We also consider a Bayesian spatial-temporal model applied to temperature data collected from ten nearby weather station in the year 2010. In this example, we estimate the covariance matrix associated with a vector of 185 parameters and again illustrate the improved computational efficiency of weighted BM estimators.  Our final example considers a Bayesian logistic regression model that illustrates weighted BM estimators with a flat top window provide more accurate coverage probabilities of multivariate confidence regions.

The rest of the paper is organized as follows.  Section~\ref{sec:covariance} summarizes current multivariate estimators of $\Sigma$.  Section~\ref{sec:weightedBM} proposes weighted BM estimators, establishes conditions that ensure strong consistency, and calculates the variance when using a Bartlett flat top lag window.  Section~\ref{sec:weightedBM} also investigates how chain length $n$ and dimension $p$ impact computation times for weighted BM, SV, and recursive estimators.  Section~\ref{sec:examples} demonstrates the finite sample properties of weighted BM estimators via four examples.  We conclude with a discussion in Section~\ref{sec:discussion}.  All proofs are relegated to the Appendix.

\section{Covariance matrix estimation} \label{sec:covariance}

Estimating $\Sigma$ is rarely done in MCMC output analysis.  Instead, most researchers ignore the cross-correlation and only estimate the diagonal entries of $\Sigma$.  Computationally efficient BM methods are usually preferred, but such methods can lead to lower than expected coverage probabilities.  In this section, we provide formal definitions for existing estimators of $\Sigma$ and provide some motivation for our proposed weighted BM estimators.  When $p=1$ these estimators reduce to the usual univariate estimators. 

First consider BM estimators where $a=a_n$ is the number of batches, $b=b_n$ is the batch size, and $n=ab$.  (Note $a$ and $b$ can depend on $n$, but we suppress this dependency to simplify notation.)  For $l=0,1,...,a-1$, denote the mean vector for batch $l$ as $\bar{Y}_{l}(b)=b^{-1}\sum_{t=1}^{b}Y_{lb+t}$.  Then the sample variance of batch means scaled up properly is used to estimate $\Sigma$, i.e.\
\begin{equation} \label{eq:bm}
\hat{\Sigma}_{bm}=\dfrac{b}{a-1}\sum_{l=0}^{a-1}(\bar{Y}_{l}(b)-\bar{Y}_{n})(\bar{Y}_{l}(b)-\bar{Y}_{n})^T.
\end{equation}
Alternatively, overlapping BM use $n-b+1$ overlapping batches of length $b$ denoted $\dot{Y}_{l}(b)=b^{-1}\sum_{t=1}^{b}Y_{l+t}$ for $l = 0, \dots, n-b$.  Then the overlapping BM estimator is given by
\begin{equation} \label{eq:olbm}
\hat{\Sigma}_{obm}=\dfrac{nb}{(n-b)(n-b+1)}\sum_{l=0}^{n-b}(\dot{Y}_{l}(b)-\bar{Y}_{n})(\dot{Y}_{l}(b)-\bar{Y}_{n})^T.
\end{equation}
Computing overlapping BM is significantly slower than BM given the increased quantity of batches.  

SV methods can also be used to estimate $\Sigma$.  First consider estimating the lag $k$ autocovariance denoted by $\Gamma(k) = \text{E}_{F} \left( Y_{t} - \theta \right) \left( Y_{t+k} - \theta \right)^{T}$ with 
\[
\hat{\Gamma}(k)=\dfrac{1}{n}\sum_{t=1}^{n-k} \left( Y_{t}-\bar{Y}_{n} \right) \left( Y_{t+k}-\bar{Y}_{n} \right)^{T}.
\]
Then the SV estimator of $\Sigma$ truncates and downweights the summed lag $k$ autocovariances.  That is, 
\[
\hat{\Sigma}_{sv}=\hat{\Gamma}(0)+\sum_{k=1}^{b}w_n(k)[\hat{\Gamma}(k)+\hat{\Gamma}(k)^T],
\]
where $b$ is the truncation point and $w_{n}(\cdot)$ is the lag window.  

We assume the lag window $w_{n}(\cdot)$ is an even function defined on $\mathbb{Z}$ such that (i) $|w_{n}(k)| \leq 1$ for all $n$ and $k$, (ii) $w_{n}(0) = 1$ for all $n$, and (iii) $w_{n}(k) = 0$ for all $|k|\geq b$.  Most commonly used lag windows satisfy this assumption, which is necessary for our proof of strong consistency.  Our discussion and simulations focus on the Bartlett, Tukey-Hanning, and Bartlett flat top lag windows defined as 
\begin{align}
w_{n}(k) & = \left( 1-|k|/b \right) I \left( |k| \le b \right) , \label{eq:bartlett} \\
w_{n}(k) & = \left( (1+\text{cos}(\pi |k|/b))/2 \right)  I \left( |k| \le b \right) \text{, and} \label{eq:th} \\
w_{n}(k) & = I \left( |k| \le b/2 \right) + \left( 2(1-|k|/b) \right)  I \left( b/2 < |k| \le b \right) , \label{eq:ft}
\end{align}
respectively (see Figure \ref{fig:windows}).  An interested reader is directed to \cite{ande:1994} for more on lag windows.
\begin{figure}[h]
    \centering
    \includegraphics[width=0.6\textwidth]{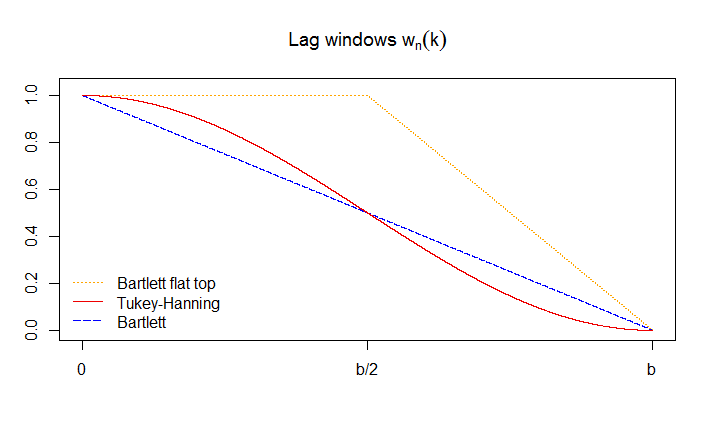}
    \caption{Plot of Bartlett, Tukey-Hanning, and Bartlett flat top lag windows.}
    \label{fig:windows}
\end{figure}

It is well known the overlapping BM estimator at \eqref{eq:olbm} is asymptotically equal to the SV estimator with a Bartlett lag window apart from some end effects \citep[see e.g.][]{welc:1987, meke:schm:1984}.  Notice in Figure \ref{fig:windows} that the Tukey-Hanning lag window slightly reduces downweighting of small lag terms compared to the Bartlett lag window in an effort to reduce bias.  \cite{poli:roma:1995,poli:roma:1996} expanded on this idea when introducing flat top lag windows that modify existing windows by letting $w_{n}(k)=1$ for $k$ near $0$.  Their work demonstrates SV estimators with flat top lag windows enjoy significant bias reduction while maintaining comparable variance.  \cite{poli:roma:1999} later illustrate the superiority of flat top lag windows in nonparametric estimation of multivariate density function.

We only consider the flat top window function constructed from the Bartlett window with $w_{n}(k)=1$ for $|k| \le b/2$ as at \eqref{eq:ft}.  For this setting, \cite{poli:roma:1995,poli:roma:1996} show the resulting SV estimator is equivalent to the difference of two Bartlett SV estimators.  Specifically, if $w_{n}(k)$ is the flat top window then
\begin{equation}\label{eq:ft=bt}
\hat{\Sigma}_{sv}=\hat{\Gamma}(0)+\sum_{k=1}^{b} w_{n}(k) [\hat{\Gamma}(k)+\hat{\Gamma}(k)^{T}]= 2\hat{\Sigma}^{(1)}-\hat{\Sigma}^{(2)},
\end{equation}
where $\hat{\Sigma}^{(1)}$ and $\hat{\Sigma}^{(2)}$ denote Bartlett SV estimators with bandwidths $b$ and $b/2$, respectively.

In the next section, we construct weighted BM estimators that inherit desired properties from lag window functions but are computationally efficient due to a nonoverlapping structure.

\section{Weighted BM estimators} \label{sec:weightedBM}

Consider first an alternative representation of the SV estimator that is akin the overlapping BM estimator.  Similar estimators have been previously studied by \cite{dame:1987, dame:1991} and \cite{fleg:jone:2010}.  To this end, define $\Delta_{1}w_{n}(k)=w_{n}(k-1)-w_{n}(k)$ and $\Delta_{2}w_{n}(k)=w_{n}(k-1)-2w_{n}(k)+w_{n}(k+1)$.  Then recall $\dot{Y}_{l}(k)=k^{-1}\sum_{t=1}^{k}Y_{l+t}$ for $l=0,..., n-k$ and consider the estimator
\[
\dot{\Sigma}=\dfrac{1}{n}\sum_{k=1}^{b}\sum_{l=0}^{n-k}k^{2}\Delta_{2}w_{n}(k)(\dot{Y}_{l}(k)-\bar{Y})(\dot{Y}_{l}(k)-\bar{Y})^{T}.
\]
If $d=\hat{\Sigma}_{sv}$-$\dot{\Sigma}$, \cite{liu:fleg:2018} show that $d\rightarrow 0$ with probability 1 as $n \to \infty$, hence the estimators are asymptotically equivalent.  Starting with $\dot{\Sigma}$, it is possible to reduce number of batches and computing time by only including non-overlapping batches.  First define the more general batch mean vector as $\bar{Y}_{l}(k)=k^{-1}\sum_{t=1}^{k}Y_{lk+t}$ for $l=0,1,...,a_{k}-1$ and $k=1,2,...,b$ where $a_{k}=\lfloor (n/k)\rfloor$.  Then the weighted BM estimator is
\begin{equation} \label{eq:weighted}
\hat{\Sigma}_{w}=\sum_{k=1}^{b}\dfrac{1}{a_{k}-1}\sum_{l=0}^{a_{k}-1}k^{2}\Delta_{2}w_{n}(k)(\bar{Y}_{l}(k)-\bar{Y})(\bar{Y}_{l}(k)-\bar{Y})^{T}.
\end{equation}

The estimator $\hat{\Sigma}_{w}$ is not necessarily computationally efficient.  However, if the lag window is such that $\Delta_{2}w_{n}(k)=0$ for certain $k$ values then the first summation can be simplified.  For the Bartlett lag window at \eqref{eq:bartlett} $\Delta_{2}w_{n}(k)= 0$ for $k=1,2,...,(b-1)$ and $\Delta_{2}w_{n}(b)=1/b$.  Hence, $\hat{\Sigma}_{w}$ at \eqref{eq:weighted} reduces to the BM estimator at \eqref{eq:bm}.

We suggest using the Bartlett flat top lag window at \eqref{eq:ft} in an effort to reduce bias.  In this case, it is easy to show $\Delta_{2}w_{n}(b/2)=-2/b,\ \Delta_{2}w_{n}(b)=2/b$, and $\Delta_{2}w_{n}(k)=0$ for all other $k$ values.  Hence, the first summation in \eqref{eq:weighted} contains two terms which is extremely computationally friendly.  For this lag window, Sections~\ref{sec:comptime} and~\ref{sec:examples} illustrate computational and bias advantages, respectively.  Since the expression of $\Delta_{2}w_{n}(\cdot)$ is similar to that of a second derivative of $w_n(\cdot)$, other piecewise linear functions would also be computationally efficient.  

\subsection{Strong consistency}
This section establishes necessary conditions for strong consistency of $\hat{\Sigma}_{w}$ for estimating $\Sigma$.  Denote the Euclidean norm by $\lVert\cdot\rVert$ and let $ \{B(t), t\geq 0\}$ be a $p$-dimensional multivariate Brownian motion.  Then the primary assumption is that of a strong invariance principle.  
\begin{assumption} \label{ass:sip}
There exists a $p\times p$ lower triangular matrix $L$, a nonnegative increasing function $\psi$ on the positive integers, a finite random variable $D$, and a sufficiently rich probability space $\Omega$ such that for almost all $\omega\in \Omega$ and for all $n>n_{0}$,
\begin{equation}\label{eq:sip}
\left\lVert\sum_{t=1}^{n} Y_{t} - n \theta - LB(n)\right\rVert<D(\omega)\psi(n)\ \ \ \ \ \text{w.p.}\ 1.
\end{equation}
\end{assumption}
Our results hold as long as Assumption~\ref{ass:sip} holds.  This includes independent processes, Markov chains, Martingale sequences, renewal processes and strong mixing processes.  An interested reader is directed to \cite{vats:fleg:jone:2015output} and the references therein.

For commonly used Markov chains in MCMC settings, \cite{vats:fleg:jone:2018} show Assumption~\ref{ass:sip} holds using results from \cite{kuel:phil:1980}.  Specifically we require polynomial ergodicity, which is weaker than geometric or uniform ergodicity \citep[see e.g.][]{meyn:twee:2009}.
\begin{corollary} \label{cor:poly} \citep[Corollary 4][]{vats:fleg:jone:2018} 
Suppose $E_F \left| Y_1 \right|^{2 + \delta} < \infty$ for some $\delta > 0$. Let $X$ be an $F$-invariant polynomially ergodic Markov chain of order $m > (1 + \epsilon_1)(1+2/\delta)$ for some $\epsilon_1 > 0$.  Then for any initial distribution, \eqref{eq:sip} holds with $\psi(n) = n^{1/2 - \lambda}$ for some $\lambda >0$.  
\end{corollary}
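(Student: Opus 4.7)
The plan is to derive the strong invariance principle by reducing to a classical SIP for strong mixing sequences (Kuelbs--Philipp, 1980), after translating the polynomial ergodicity hypothesis into polynomial decay of a mixing coefficient. Concretely, I would first argue that for a Harris ergodic, $F$-invariant Markov chain that is polynomially ergodic of order $m$, the stationary process $\{Y_t = g(X_t)\}$ is $\beta$-mixing (equivalently, absolutely regular) with coefficients $\beta(n) = O(n^{-m})$; this follows from the standard representation of $\beta(n)$ as an $L^1$ distance between the $n$-step kernel and the invariant distribution, which is precisely what polynomial ergodicity bounds. Since $\beta$-mixing implies $\alpha$-mixing with $\alpha(n) \le \beta(n)$, the $\alpha$-mixing coefficients also decay at polynomial rate $m$.

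The second step is to invoke the Kuelbs--Philipp SIP. Their theorem states that for a strictly stationary, strong mixing, zero-mean $\mathbb{R}^p$-valued sequence with $E\lVert Y_1\rVert^{2+\delta} < \infty$ and mixing rate $\alpha(n) = O(n^{-r})$ satisfying $r > (1+\epsilon_1)(1 + 2/\delta)$, one can construct, on a sufficiently rich probability space, a Brownian motion $B(t)$ with some covariance $\Sigma = LL^T$ such that
\[
\Bigl\lVert \sum_{t=1}^n (Y_t - \theta) - L B(n) \Bigr\rVert = o(n^{1/2 - \lambda}) \quad \text{w.p.\ 1}
\]
for some $\lambda > 0$ depending on $\delta, \epsilon_1, r$. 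Plugging $r = m$ and using the hypothesis $m > (1+\epsilon_1)(1 + 2/\delta)$ meets their condition exactly, yielding the desired rate $\psi(n) = n^{1/2 - \lambda}$.

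The third step is to pass from the stationary start to an arbitrary initial distribution. Using the Harris ergodicity of $X$, I would couple the chain started from an arbitrary $\nu$ with one started from $F$ so that the two chains meet in an almost surely finite (in fact, polynomially integrable) random time $T$. The partial sums then differ by $O(T) = O(1)$ almost surely, which is absorbed into the $D(\omega)\psi(n)$ term since $n^{1/2 - \lambda} \to \infty$. Choosing the enlarged probability space to carry both the coupling and the Brownian motion from step two gives the SIP in the form stated.

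The main obstacle I anticipate is bookkeeping around the moment/mixing trade-off: the Kuelbs--Philipp rate $\lambda$ depends on the exact balance between $\delta$ and the mixing exponent, and one must verify that the slight slack $\epsilon_1 > 0$ in the hypothesis is precisely what lets $\lambda$ be positive rather than zero. A secondary technical point is justifying that polynomial ergodicity of order $m$ really yields $\beta(n) = O(n^{-m})$ under only the integrability $E_F\lVert Y_1\rVert^{2+\delta} < \infty$ rather than a bounded drift function; this is where I would lean most heavily on the standard decomposition via the invariant measure and the cited results in \cite{vats:fleg:jone:2018}.
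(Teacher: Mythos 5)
This corollary is imported verbatim from \cite{vats:fleg:jone:2018} (their Corollary 4); the paper itself offers no proof beyond the citation, noting only that the result rests on \cite{kuel:phil:1980}. Your outline---polynomial ergodicity gives $\beta(n)=O(n^{-m})$ via the total-variation representation of the mixing coefficient (with the integrability of the bounding function $M$ built into the definition of polynomial ergodicity), hence $\alpha$-mixing at the same rate, then the Kuelbs--Philipp strong invariance principle under $m>(1+\epsilon_1)(1+2/\delta)$, and finally the passage to an arbitrary initial distribution by coupling---is exactly the route taken in that reference, so the proposal is correct and follows essentially the same approach.
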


\begin{remark}
\cite{kuel:phil:1980} show $\lambda$ only depends on $p$, $\epsilon_1$, and $\delta$, but quantifying this relationship is an open problem.  \cite{dame:1991} notes that $\lambda$ is closer to 0 for slow mixing (heavily correlated) processes and closer to 1/2 for fast mixing processes.
\end{remark}

\begin{remark}
Under stronger assumptions of geometric ergodicity, a one step minorization condition, and $p=1$, \cite{jone:hara:caff:neat:2006} and \cite{bedn:latu:2007} provide an exact relationship between $\lambda$ and the convergence rate of the chain \citep[see Lemma 3 of][]{fleg:jone:2010}.  Establishing a similar result for $p>1$ is a direction of ongoing research.
\end{remark}

The weighted BM estimator can only be consistent if the batch size increases with $n$ leading to the following additional assumption.
\begin{assumption} \label{ass:batch}
The batch size $b$ is an integer sequence such that $b\rightarrow\infty$ and $n/b\rightarrow\infty$ as $n\rightarrow\infty$, where $b$ and $n/b$ are both monotonically nondecreasing.
\end{assumption}

In Theorem~\ref{thm:consistency} we show strong consistency of $\hat{\Sigma}_{w}$. The proof is given in Appendix~\ref{proof:theo1}.
\begin{theorem} \label{thm:consistency}
Suppose the conditions of Corollary~\ref{cor:poly} hold, Assumption~\ref{ass:batch} holds, and there exists a constant $c\geq\ 1$ such that $\sum_{n}(b/n)^{c}<\infty$. If
\begin{equation} \label{eq:cond1}
\sum_{k=1}^{b}k\Delta_{2}w_{n}(k)=1,
\end{equation}
\begin{equation} \label{eq:cond2}
b n^{1 - 2\lambda} \log n\left(\sum_{k=1}^{b}|\Delta_{2}w_{n}(k)|\right)^{2}\rightarrow 0 \text{ as } n\rightarrow\infty, \text{ and }
\end{equation} 
\begin{equation} \label{eq:cond3}
n^{1 - 2\lambda} \sum_{k=1}^{b}|\Delta_{2}w_{n}(k)|\rightarrow 0 \text{ as } n\rightarrow\infty,
\end{equation}
then with probability 1, $\hat{\Sigma}_{w}\rightarrow \Sigma$ as $n\rightarrow\infty$.
\end{theorem}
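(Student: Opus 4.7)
The plan is to invoke the strong invariance principle to couple $\hat{\Sigma}_w$ to a Brownian-motion analogue, bound the coupling error using \eqref{eq:cond2}--\eqref{eq:cond3}, and verify that the analogue converges to $\Sigma$ through \eqref{eq:cond1} and a strong law for Gaussian quadratic forms.  First I would apply Assumption~\ref{ass:sip} to each partial sum to write $\sum_{t=1}^{m}Y_t-m\theta = LB(m)+\epsilon_m$ with $\|\epsilon_m\|\leq D\psi(m)\leq Dn^{1/2-\lambda}$ for every $m\leq n$.  Setting $T_l(k)=B((l+1)k)-B(lk)$ and $U_{l,k}=T_l(k)-(k/n)B(n)$, this yields
\[
k(\bar{Y}_l(k)-\bar{Y}_n)=LU_{l,k}+V_{l,k},\qquad \|V_{l,k}\|\leq 3D\psi(n).
\]
Substituting into \eqref{eq:weighted} and expanding the outer product decomposes $\hat{\Sigma}_w$ into a dominant term $\tilde{\Sigma}_w$ built from $LU_{l,k}U_{l,k}^T L^T$, a cross remainder $R_n^{UV}$, and a pure SIP-error remainder $R_n^{VV}$.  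The deterministic bound $\|R_n^{VV}\|\leq 9D^2 n^{1-2\lambda}\sum_k|\Delta_2 w_n(k)|$ vanishes by \eqref{eq:cond3}.  For $R_n^{UV}$ I would use the Brownian maximal inequality $\max_{0\leq l<a_k}\|T_l(k)\|=O(\sqrt{k\log n})$ a.s.\ and Cauchy--Schwarz to obtain a contribution of order $\sqrt{bn^{1-2\lambda}\log n}\,\sum_k|\Delta_2 w_n(k)|$, which is precisely what \eqref{eq:cond2} eliminates.

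For $\tilde{\Sigma}_w$ I would expand
\[
U_{l,k}U_{l,k}^T=T_l(k)T_l(k)^T-(k/n)\bigl[T_l(k)B(n)^T+B(n)T_l(k)^T\bigr]+(k/n)^2 B(n)B(n)^T,
\]
exploit $\sum_{l=0}^{a_k-1}T_l(k)=B(a_k k)$ with $a_k k\geq n-b$ to show that the mixed and pure-$B(n)$ pieces are absorbed using $\|B(n)\|^2=O(n\log\log n)$ a.s., $\sum_k k\Delta_2 w_n(k)=1$, and the elementary bound $\sum_k k^2|\Delta_2 w_n(k)|=O(b)$.  The leading piece reduces to $L\bigl\{\sum_{k=1}^{b}\Delta_2 w_n(k)\,\hat{M}_k\bigr\}L^T$, where $\hat{M}_k:=(a_k-1)^{-1}\sum_l T_l(k)T_l(k)^T$ is an average of i.i.d.\ Wishart-type matrices with mean $kI_p$.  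A Borel--Cantelli argument powered by the summability $\sum_n(b/n)^c<\infty$ (a Damerdji-style condition, also used in \cite{vats:fleg:jone:2018}) will then yield the uniform rate $\sup_{1\leq k\leq b}k^{-1}\|\hat{M}_k-kI_p\|\to 0$ almost surely, and combined with \eqref{eq:cond1} this produces $\tilde{\Sigma}_w\to LL^T=\Sigma$ w.p.\ 1.

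The main obstacle is the uniform control of $\hat{M}_k$ over all batch sizes $k\in\{1,\dots,b\}$ simultaneously: the number of batches $a_k=\lfloor n/k\rfloor$ shrinks as $k$ grows, while $|\Delta_2 w_n(k)|$ can be concentrated at large $k$ (as is the case for the Bartlett flat-top window, where it charges only $k=b/2$ and $k=b$).  Matching the Gaussian fluctuations against the weight structure in a way compatible with both $\sum_n(b/n)^c<\infty$ and the tail hypotheses \eqref{eq:cond2}--\eqref{eq:cond3} is the bookkeeping the appendix argument must carry out carefully; once that is in place the remainder of the proof is a routine expansion of the outer product.
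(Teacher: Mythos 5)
Your plan is correct and follows essentially the same route as the paper's own proof: the paper likewise splits the argument into showing the Brownian analogue $\tilde{\Sigma}_{w}\to I_p$ via \eqref{eq:cond1} and a Damerdji-style strong law driven by $\sum_n(b/n)^c<\infty$ (Lemma~\ref{lemma:tilde}), and bounding $\hat{\Sigma}_{w}-L\tilde{\Sigma}_{w}L^{T}$ by expanding the outer product into SIP-error, cross, and Brownian pieces controlled exactly as you describe by \eqref{eq:cond3}, \eqref{eq:cond2}, the maximal inequality, and the law of the iterated logarithm (Lemma~\ref{lemma:hat}). Your grouping of the paper's four terms $A_k, LD_k, LE_{n,k}, F_{n,k}$ into $U_{l,k}$ and $V_{l,k}$ is only a cosmetic difference, and the uniform-in-$k$ bookkeeping you flag is precisely what the paper delegates to the cited results of Damerdji and Vats et al.
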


\begin{remark}
\cite{fleg:jone:2010} and \cite{vats:fleg:jone:2018} include conditions at \eqref{eq:cond2} and \eqref{eq:cond3} to obtain strong consistency of univariate and multivariate SV estimators, respectively.  Lemma 1 of \cite{vats:fleg:jone:2018} is especially useful in checking these.
\end{remark}

We now consider if some common lag windows satisfy \eqref{eq:cond1}, \eqref{eq:cond2}, and \eqref{eq:cond3}.

\textit{Simple Truncation}: $w_n(k) = I(|k| < b)$. Since $\Delta_2 w_n(b)= 1$, condition \eqref{eq:cond3} is not satisfied.

\textit{Tukey-Hanning}: $w_n(k) \left( (1+\text{cos}(\pi |k|/b))/2 \right)  I( |k| \le b)$.  Appendix \ref{app:TH} provides a calculation to ensure \eqref{eq:cond1} holds.  \cite{vats:fleg:jone:2018} show for the more general Blackman-Tukey window \eqref{eq:cond2} and \eqref{eq:cond3} hold if $b^{-1} n^{1 - 2\lambda} \log n \to 0$ as $n\rightarrow\infty$ using their Lemma 1.

\textit{Parzen}: $w_n(k) = \left[1 - |k|^q/{b^q} \right] I(|k| \le b)$ for $q \in \mathbb{Z}^+$. A method of differences calculation shows \eqref{eq:cond1} holds.  \cite{vats:fleg:jone:2018} again show \eqref{eq:cond2} and \eqref{eq:cond3} hold if $b^{-1} n^{1 - 2\lambda} \log n \to 0$ as $n\rightarrow\infty$.  When $q = 1$ this is the \textit{Bartlett} window at \eqref{eq:bartlett} and $\hat{\Sigma}_{w}$ equals $\hat{\Sigma}_{bm}$ defined at \eqref{eq:bm}.  Hence Theorem~\ref{thm:consistency} provides an alternative proof of strong consistency under the same conditions as \cite{vats:fleg:jone:2015output}.
\begin{theorem} \citep[Theorem 2][]{vats:fleg:jone:2015output}
\label{thm:mbm}
Suppose the conditions of Corollary~\ref{cor:poly} hold, Assumption~\ref{ass:batch} holds, and there exists a constant $c\geq\ 1$ such that $\sum_{n}(b/n)^{c}<\infty$. If $b^{-1} n^{1 - 2\lambda} \log n \to 0$ as $n \to \infty$, then with probability 1, $\hat{\Sigma}_{bm}\rightarrow \Sigma$ as $n\rightarrow\infty$.
\end{theorem}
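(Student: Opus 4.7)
The plan is to deduce Theorem~\ref{thm:mbm} as an immediate corollary of Theorem~\ref{thm:consistency} by specializing to the Bartlett lag window at \eqref{eq:bartlett}. The excerpt has already observed that for this window $\Delta_{2}w_{n}(k)=0$ for $k=1,\dots,b-1$ and $\Delta_{2}w_{n}(b)=1/b$, and consequently the weighted batch means estimator $\hat{\Sigma}_{w}$ collapses to the ordinary batch means estimator $\hat{\Sigma}_{bm}$ in \eqref{eq:bm}. Therefore it is enough to invoke Theorem~\ref{thm:consistency} with this choice of lag window and check that its three hypotheses \eqref{eq:cond1}, \eqref{eq:cond2}, and \eqref{eq:cond3} are met under the assumption $b^{-1}n^{1-2\lambda}\log n\to 0$.

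The verifications are short and I would carry them out in order. First, since $\Delta_{2}w_{n}(k)$ is supported only at $k=b$,
\[
\sum_{k=1}^{b}k\,\Delta_{2}w_{n}(k)=b\cdot\frac{1}{b}=1,
\]
which is \eqref{eq:cond1}. Similarly, $\sum_{k=1}^{b}|\Delta_{2}w_{n}(k)|=1/b$, so
\[
b\,n^{1-2\lambda}\log n\left(\sum_{k=1}^{b}|\Delta_{2}w_{n}(k)|\right)^{2}
=b^{-1}n^{1-2\lambda}\log n,
\]
which tends to $0$ by assumption, giving \eqref{eq:cond2}. Finally,
\[
n^{1-2\lambda}\sum_{k=1}^{b}|\Delta_{2}w_{n}(k)|=b^{-1}n^{1-2\lambda},
\]
which is dominated by $b^{-1}n^{1-2\lambda}\log n$ for $n\geq 3$ and hence also tends to $0$, giving \eqref{eq:cond3}.

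Since the remaining hypotheses of Theorem~\ref{thm:consistency}, namely the conditions of Corollary~\ref{cor:poly}, Assumption~\ref{ass:batch}, and the summability $\sum_{n}(b/n)^{c}<\infty$, are literally the hypotheses imposed in Theorem~\ref{thm:mbm}, the conclusion $\hat{\Sigma}_{w}\to\Sigma$ with probability $1$ follows, and the identification $\hat{\Sigma}_{w}=\hat{\Sigma}_{bm}$ yields the theorem. I do not expect any real obstacle: the entire argument is a direct specialization, and the only nontrivial piece of work, strong consistency of $\hat{\Sigma}_{w}$, has been offloaded to Theorem~\ref{thm:consistency}, whose proof lives in Appendix~\ref{proof:theo1}.
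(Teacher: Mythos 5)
Your proposal is correct and is exactly the route the paper itself takes: it observes that for the Bartlett window (the Parzen window with $q=1$) $\Delta_{2}w_{n}(k)$ vanishes except at $k=b$ where it equals $1/b$, so $\hat{\Sigma}_{w}=\hat{\Sigma}_{bm}$, and then Theorem~\ref{thm:consistency} delivers the result since \eqref{eq:cond1}, \eqref{eq:cond2}, and \eqref{eq:cond3} reduce to the stated condition $b^{-1}n^{1-2\lambda}\log n\to 0$. Your explicit verification of the three conditions matches the paper's (largely implicit) calculation, so there is nothing to add.
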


\textit{Scale-parameter modified Bartlett}: $w_n(k) = \left[1 -\eta |k|/{b} \right] I(|k| < b)$ where $\eta$ is a positive constant not equal to 1.  \cite{vats:fleg:jone:2018} show $\sum_{k=1}^{b} |\Delta_2 w_n(k)|$ does not converge to 0, hence \eqref{eq:cond3} is not satisfied.

\textit{Bartlett flat top}: $w_n(k) = I \left( |k| \le b/2 \right) + \left( 2(1-|k|/b) \right)  I \left( b/2 < |k| \le b \right)$.  Condition \eqref{eq:cond1} is satisfied since
\[
\sum_{k=1}^{b}\Delta_{2}w_{n}(k)=-\dfrac{2}{b}\cdot\dfrac{b}{2}+\dfrac{2}{b}\cdot b=1.
\]
However, \eqref{eq:cond3} does not hold since $\Delta_{2}w_{n}(b/2)=-2/b$ and $\Delta_{2}w_{n}(b)=2/b$.  We can still ensure strong consistency since the estimator can be expressed as the difference between two BM estimators similar to \eqref{eq:ft=bt}.  Specifically,
\begin{equation}\label{eq:WBMft=BM}
\hat{\Sigma}_{w}= 2\hat{\Sigma}_{bm}-\hat{\Sigma}_{bm}^{(2)},
\end{equation}
where $\hat{\Sigma}_{bm}$ is defined at \eqref{eq:bm} (batch size $b$) and $\hat{\Sigma}_{bm}^{(2)}$ defines a BM estimator with batch size $b/2$.  With \eqref{eq:WBMft=BM}, strong consistency follows from Theorem~\ref{thm:mbm}.
\begin{corollary}
Suppose the conditions of Theorem~\ref{thm:mbm} hold and $w_{n}(k)$ is the flat top lag window at \eqref{eq:ft}, then with probability 1, $\hat{\Sigma}_{w}\rightarrow \Sigma$ as $n\rightarrow\infty$.
\end{corollary}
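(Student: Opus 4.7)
The plan is to reduce the corollary to Theorem~\ref{thm:mbm} via the explicit decomposition \eqref{eq:WBMft=BM}, which rewrites $\hat{\Sigma}_w$ as a linear combination of two ordinary BM estimators. First I would verify \eqref{eq:WBMft=BM} in detail. For the Bartlett flat top window at \eqref{eq:ft}, the second difference $\Delta_2 w_n(k)$ vanishes for all $k \in \{1,\dots,b\}$ except at $k=b/2$ and $k=b$, where the text already records $\Delta_2 w_n(b/2) = -2/b$ and $\Delta_2 w_n(b) = 2/b$ (assuming $b$ is even). Substituting these two nonzero values into \eqref{eq:weighted} collapses the outer sum to exactly two terms: the $k=b$ term carries prefactor $b^{2}\cdot(2/b)=2b$ and, after comparison with \eqref{eq:bm}, produces $2\hat{\Sigma}_{bm}$; the $k=b/2$ term carries prefactor $(b/2)^{2}\cdot(-2/b)=-b/2$ and produces $-\hat{\Sigma}_{bm}^{(2)}$, where $\hat{\Sigma}_{bm}^{(2)}$ denotes the BM estimator formed from $a_{b/2}=2a$ nonoverlapping batches of size $b/2$. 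Matching the normalizing constants $1/(a_{k}-1)$ in \eqref{eq:weighted} against $b/(a-1)$ and $(b/2)/(2a-1)$ in the BM form establishes $\hat{\Sigma}_{w}= 2\hat{\Sigma}_{bm}-\hat{\Sigma}_{bm}^{(2)}$.

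Next, I would apply Theorem~\ref{thm:mbm} twice. The batch size $b$ piece satisfies verbatim the hypotheses of Theorem~\ref{thm:mbm}, so $\hat{\Sigma}_{bm}\to\Sigma$ with probability $1$. For the batch size $b/2$ piece, I would check that each hypothesis of Theorem~\ref{thm:mbm} is preserved under the replacement $b\mapsto b/2$: Assumption~\ref{ass:batch} holds since $b/2\to\infty$ and $n/(b/2)=2n/b\to\infty$ remain monotone; the summability condition is preserved via $\sum_{n}(b/(2n))^{c}=2^{-c}\sum_{n}(b/n)^{c}<\infty$; and the rate condition follows from $(b/2)^{-1}n^{1-2\lambda}\log n = 2\,b^{-1}n^{1-2\lambda}\log n \to 0$. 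Hence $\hat{\Sigma}_{bm}^{(2)}\to\Sigma$ with probability $1$, and taking the linear combination from the decomposition yields $\hat{\Sigma}_{w}\to 2\Sigma-\Sigma=\Sigma$ almost surely.

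The only real obstacle is the bookkeeping required to line up the normalizing constants and batch indices in the decomposition step; once \eqref{eq:WBMft=BM} is verified, everything else is a direct appeal to Theorem~\ref{thm:mbm}. A minor technical point is that $b/2$ must be interpreted as an integer, which is standard in the flat top literature; if $b$ is odd one replaces $b/2$ by $\lfloor b/2\rfloor$ with correspondingly adjusted values of $\Delta_{2}w_{n}$, and the same argument carries through. No new probabilistic input beyond Theorem~\ref{thm:mbm} is required.
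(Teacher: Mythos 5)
Your proposal is correct and follows the paper's own route exactly: establish the decomposition $\hat{\Sigma}_{w}=2\hat{\Sigma}_{bm}-\hat{\Sigma}_{bm}^{(2)}$ at \eqref{eq:WBMft=BM} and then invoke Theorem~\ref{thm:mbm} for each BM piece. Your additional bookkeeping—verifying the prefactors $b^{2}(2/b)$ and $(b/2)^{2}(-2/b)$ and checking that the hypotheses of Theorem~\ref{thm:mbm} survive the substitution $b\mapsto b/2$—is a useful elaboration of details the paper leaves implicit, but it is the same argument.
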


A common choice is setting $b = \lfloor n^{\nu} \rfloor$ for $ 0 < \nu < 1 $.  In this case, $\nu > 1 - 2\lambda$ ensures $b^{-1} n^{1 - 2\lambda} \log n \to 0$ as $n \to \infty$.  Finite sample performance naturally depends on the choice of $\nu$.  \cite{fleg:jone:2010} and \cite{liu:fleg:2018} minimize the asymptotic mean-squared error and conclude the optimal truncation point is proportional to $\lfloor n^{1/3} \rfloor$. 


\subsection{Increase in variance}
Since weighted BM variance estimators are based only on the nonoverlapping batches, a variance inflation is expected relative to SV estimators.  Here we focus on estimating the diagonal entries of $\Sigma$ but the off-diagonal entries behave in a similar manner. 

Suppose $i \in \{ 1, \dots, p \}$ then denote estimators of the $i$th diagonal element of $\Sigma$ based on BM and weighted BM with a Bartlett flat top lag window as $\hat{\sigma}_{bm}^2$ and $\hat{\sigma}_{w}^2$, respectively.  Further, denote SV estimators with Bartlett and Bartlett flat top lag windows as $\hat{\sigma}_{b}^2$ and $\hat{\sigma}_{f}^2$, respectively.  Results in \cite{fleg:jone:2010} imply as $n\rightarrow\infty$
\[
\text{Var}[\hat{\sigma}^{2}_{bm}]/\text{Var}[\hat{\sigma}^{2}_{b}]=1.5
\]
since the overlapping BM estimator is asymptotically equivalent to $\hat{\sigma}_{b}^2$.  The variance of $\hat{\sigma}_{b}^2$ has also been studied by \cite{lahi:1999} and \cite{poli:whit:2004}.  

The following result establishes the variance ratio between weighted BM and SV estimators with a Bartlett flat top lag window at \eqref{eq:ft}. The proof is given in  Appendix~\ref{proof:var}.
\begin{theorem} \label{thm:var}
Suppose the conditions of Corollary~\ref{cor:poly} hold, Assumption \ref{ass:batch} holds, $ED^4<\infty$ in \eqref{eq:sip}, and $E_F Y_1^4<\infty$. If as $n\rightarrow\infty$, $n^{1 - 2\lambda} b^{-1} \log n \rightarrow 0$, then
\[
\text{Var}[\hat{\sigma}^{2}_{w}]/\text{Var}[\hat{\sigma}^{2}_{f}]=1.875.
\]
\end{theorem}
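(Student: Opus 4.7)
The plan is to exploit the decompositions $\hat{\sigma}_{w}^{2} = 2\hat{\sigma}_{bm}^{2}(b) - \hat{\sigma}_{bm}^{2}(b/2)$ from \eqref{eq:WBMft=BM} and $\hat{\sigma}_{f}^{2} = 2\hat{\sigma}_{b}^{2}(b) - \hat{\sigma}_{b}^{2}(b/2)$ from \eqref{eq:ft=bt}, where the argument indicates the batch size or bandwidth. Both variances then expand as
\[
\Var[\hat{\sigma}_{w}^{2}] = 4\Var[\hat{\sigma}_{bm}^{2}(b)] + \Var[\hat{\sigma}_{bm}^{2}(b/2)] - 4\Cov[\hat{\sigma}_{bm}^{2}(b), \hat{\sigma}_{bm}^{2}(b/2)],
\]
with an analogous formula for $\hat{\sigma}_{f}^{2}$, so the task reduces to pinning down six asymptotic quantities. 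Throughout, I would use the strong invariance principle in Assumption~\ref{ass:sip} (available by Corollary~\ref{cor:poly}) to replace the partial sums $\sum_{t=1}^{n}(Y_{t}-\theta)$ by $\sigma B(n)$ up to a controllable error; the moment hypotheses $ED^{4}<\infty$, $E_{F}Y_{1}^{4}<\infty$ and the rate $n^{1-2\lambda}b^{-1}\log n \to 0$ are precisely what is needed to Cauchy--Schwarz the cross-terms between the Gaussian leading part and the SIP remainder down to $o(b/n)$.

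For the batch-means part, the invariance replacement makes the standardized batch sums $b^{-1/2}\sigma^{-1}\sum_{t=(j-1)b+1}^{jb}(Y_{t}-\theta)$ asymptotically iid standard normal, call them $Z_{j}$; similarly for batch size $b/2$ one gets iid standard normals $U_{i}$. The nesting ``two consecutive small batches sum to one big batch'' forces $Z_{j} = (U_{2j-1}+U_{2j})/\sqrt{2}$. Using $\Var[U^{2}]=2$ together with the direct calculation $\Cov[(U_{1}+U_{2})^{2}, U_{i}^{2}] = 2$ for $i\in\{1,2\}$, I obtain $\Var[\hat{\sigma}_{bm}^{2}(b)] \sim 2\sigma^{4}b/n$, $\Var[\hat{\sigma}_{bm}^{2}(b/2)] \sim \sigma^{4}b/n$, and $\Cov[\hat{\sigma}_{bm}^{2}(b),\hat{\sigma}_{bm}^{2}(b/2)] \sim \sigma^{4}b/n$. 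Hence $\Var[\hat{\sigma}_{w}^{2}] \sim (8+1-4)\sigma^{4}b/n = 5\sigma^{4}b/n$.

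For the spectral-variance part, the individual variances $\Var[\hat{\sigma}_{b}^{2}(b)] \sim (4/3)\sigma^{4}b/n$ and $\Var[\hat{\sigma}_{b}^{2}(b/2)] \sim (2/3)\sigma^{4}b/n$ follow the same route as the scalar computation in \cite{fleg:jone:2010}. The novel piece is the cross-covariance at mismatched bandwidths. After Brownian approximation, the Bartlett SV estimator is asymptotically equivalent to the overlapping-batch-means quadratic form, so using the Gaussian identity $\Cov[X^{2},Y^{2}] = 2\Cov[X,Y]^{2}$,
\[
\Cov[\hat{\sigma}_{b}^{2}(b),\hat{\sigma}_{b}^{2}(b/2)] \sim \frac{2\sigma^{4}}{(b/n)(b/(2n))} \int\!\!\int \ell(s,t)^{2}\, ds\, dt,
\]
where $\ell(s,t)$ is the length of the intersection of $[s,s+b/n]$ with $[t,t+b/(2n)]$, a trapezoidal piecewise-linear function of the offset $u=t-s$. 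Splitting $\int \ell(u)^{2}\,du$ into the three natural pieces over $u \in (-b/(2n), 0)$, $(0, b/(2n))$, and $(b/(2n), b/n)$ yields $5(b/n)^{3}/24$, hence $\Cov[\hat{\sigma}_{b}^{2}(b),\hat{\sigma}_{b}^{2}(b/2)] \sim (5/6)\sigma^{4}b/n$ and $\Var[\hat{\sigma}_{f}^{2}] \sim (16/3 + 2/3 - 10/3)\sigma^{4}b/n = (8/3)\sigma^{4}b/n$. The ratio is then $5/(8/3) = 15/8 = 1.875$.

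The main obstacle I anticipate is the book-keeping of the SIP remainder inside the variances. Both flat-top estimators are \emph{differences} of two quadratic forms built from essentially the same partial sums, so substantial cancellation brings the leading variance to order $b/n$ rather than $O(1)$; it is therefore essential that the SIP error really contribute to the variance at an order strictly smaller than $b/n$, not merely $o(1)$. The fourth-moment hypotheses and the rate condition are exactly calibrated to achieve this, along lines that parallel, and can largely be imported from, \cite{fleg:jone:2010}.
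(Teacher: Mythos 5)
Your proposal is correct: it reproduces the paper's constants $\Var[\hat{\sigma}^{2}_{w}]\sim 5\sigma^{4}b/n$ and $\Var[\hat{\sigma}^{2}_{f}]\sim \tfrac{8}{3}\sigma^{4}b/n$, and the overall architecture — reduce to a Gaussian/Brownian computation, expand the variance of each flat-top estimator as $4\Var+\Var-4\Cov$ of its two component estimators, and control the strong-invariance remainder in $L^{2}$ by importing the machinery of Flegal and Jones (2010) — is the same as the paper's (the paper's $A_{1}+A_{2}+A_{3}$ expansion in Lemmas~\ref{lemma:f} and~\ref{lemma:w} is exactly your decomposition, and its Lemma~\ref{lemma:Esq0} plays the role of your remainder step). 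Where you genuinely depart is in how the leading constants are extracted. The paper grinds through explicit discrete sums of fourth moments of bivariate normals, repeatedly conditioning $Z_{1}\mid Z_{2}$ and summing closed-form polynomials in $s$ over all lag offsets; this occupies several pages. You instead exploit the exact nesting identity $Z_{j}=(U_{2j-1}+U_{2j})/\sqrt{2}$ for the batch-means cross-covariance, and pass to a continuum overlap-kernel integral $\int \ell(u)^{2}\,du=5(b/n)^{3}/24$ together with the Gaussian identity $\Cov[X^{2},Y^{2}]=2\Cov[X,Y]^{2}$ for the spectral-variance cross-covariance. Both give $\Cov\sim\sigma^{4}b/n$ and $\Cov\sim\tfrac{5}{6}\sigma^{4}b/n$ respectively, matching what the paper's discrete sums yield, but your route is shorter, less error-prone, and makes the source of each constant transparent; the cost is that the continuum approximation and the neglect of the $\bar{Y}$-centering and the $(a-1)$-versus-$a$ normalization would need a one-line justification that they only perturb the variance at order $o(b/n)$ (which the paper's exact discrete computation confirms). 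Your closing caveat — that the SIP remainder must be shown to contribute $o(b/n)$ to the variance, not merely $o(1)$ — is well taken; the paper's own bound on its correction term $\eta$ is stated only as $o(1)$, so on this point you are, if anything, more careful than the published argument.
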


\begin{remark} 
For the Tukey-Hanning lag window, $\Delta_{2}w_{n}(k) \ne 0$ for all $k$ and there is no obvious simplification in the definition of $\hat{\Sigma}_{w}$ at \eqref{eq:weighted}.  Hence a variance ratio expression is challenging to obtain.  (This difficulty persists for other lag windows where a simplification in the definition of $\hat{\Sigma}_{w}$ is unavailable.)  Alternatively, this ratio can be approximated via simulation as we illustrate in Section~\ref{sec:examples}.
\end{remark}

\begin{remark}
Results in Appendix~\ref{proof:var} combined with Theorem 4 of \cite{fleg:jone:2010} yield
\[
\text{Var}[\hat{\sigma}^{2}_{f}]/\text{Var}[\hat{\sigma}^{2}_{b}]=2.
\]
\cite{poli:roma:1996, poli:roma:1995} mention such a variance increase for flat-top estimators, but go on to argue it is offset by lower bias. 
\end{remark}

\subsection{Computational time} \label{sec:comptime}

This section investigates how chain length $n$ and dimension $p$ affect computation time for weighted BM, SV, and recursive estimators.  Calculations were completed on a 2016 MacBook (1.2 GHz Intel Core m5) and coded exclusively in \texttt{R} to ensure fairness.  \cite{chan2017automatic} provide the \texttt{R}-package \texttt{rTACM} (version 3.1), which we utilize to compute recursive estimators.

As an example, consider the $p$-dimensional vector autoregressive process of order 1 (VAR(1)) 
\[
X_t=\Phi X_{t-1}+\epsilon_t,
\]
for $t=1,2,\dots$ where $X_t\in \mathbb{R}^p$, $\epsilon_t$ are i.i.d.\ $N_p(0,I_p)$ and $\Phi$ is a $p\times p$ matrix.  When the largest eigenvalue of $\Phi$ in absolute value is less than 1 the Markov chain is geometrically ergodic \citep{tjostheim1990non}.  Further, the invariant distribution is $N_{p}(0, V)$ where $vec(V)=(I_{p^2}-\Phi\otimes\Phi)^{-1}vec(I_p)$ and $\otimes$ denotes the Kronecker product. Consider approximating $\theta=E{X_1} =0$ by $\bar{Y}_{n}=\bar{X}_n$. In the CLT at \eqref{eq:clt}, we have
\begin{align*}
\Sigma&=\text{Var}[X_1]+\sum_{k=1}^{\infty}[\text{Cov}(X_1,X_{1+k})+\text{Cov}(X_1,X_{1+k})^T]\\
&=(I_p-\Phi)^{-1}V+V(I_p-\Phi)^{-1}-V.
\end{align*}

A geometrically ergodic Markov chain is generated with $\Phi$ chosen as follows. Consider a $p\times p$ matrix $A$ with each entry generated from standard normal distribution,  and $B=AA^{T}$ which is a symmetric matrix with the largest eigenvalue $m$. Then $\Phi=B/(m+1)$ ensures geometrically ergodicity. The corresponding $\Sigma$ is estimated by weighted BM and SV estimators with window functions at \eqref{eq:bartlett}, \eqref{eq:th} and \eqref{eq:ft} using a truncation point of $\lfloor n^{1/3} \rfloor$.  Recursive estimators use default settings from the \texttt{rTACM} package while only storing the final variance estimate.  

For each combination of $p \in \{ 10, 20, 30 \}$ and $n \in \{ 1e5, 1e6, 5e6 \}$, Table~\ref{table:var1} presents average computing time over 10 replications.  There are significant computational gains for weighted BM with flat top and Bartlett lag windows.  However, there is minimal computation gain for the Tukey-Hanning lag window since the double sum in \eqref{eq:weighted} must be evaluated fully.  Table~\ref{table:var1} also shows recursive estimates are significantly slower.  When $p=30$ and $n=5e5$, recursive, flat top SV, and flat top weighted BM estimators take approximately 30 minutes, 60 seconds, and one second, respectively.  Increasing $n$ to $1e6$ iterations, the same estimators require approximately one hour, three minutes, and two seconds, respectively.  Computational gains continue to increase with further increases to $p$ or $n$ (see e.g.\ Section~\ref{sec:dynamic}).

\begin{table}[tbh]
\centering
\begin{tabular}{c|ccc|ccc|ccc}
\multicolumn{10}{c}{\textbf{Weighted BM}}\\
Window & \multicolumn{3}{c|}{Flat top} & \multicolumn{3}{c|}{BM} & \multicolumn{3}{c}{Tukey-Hanning} 
\\
$n$&$5e4$&$1e5$&$5e5$&$5e4$&$1e5$&$5e5$&$5e4$&$1e5$&$5e5$
\\ \hline
p=10& $<$.1 s  & .1 s   & .4 s & $<$.1 s  & $<$.1 s   & .2 s  & 1.9 s  &  4.8 s  &  27 s 
\\ 
p=20& .1 s & .2 s & .6 s& $<$.1 s & $<$.1 s & .4 s & 2.5 s  & 6 s & 36 s
 \\ 
 p=30& .1 s & .2 s & .9 s & $<$.1 s & .1 s & .6 s & 3 s & 6.8 s &  41 s
\end{tabular}
\begin{tabular}{c|ccc|ccc|ccc}
\multicolumn{10}{c}{\textbf{Spectral Variance}}\\
Window  & \multicolumn{3}{c|}{Flat top} & \multicolumn{3}{c|}{Bartlett} & \multicolumn{3}{c}{Tukey-Hanning} 
\\
$n$&$5e4$&$1e5$&$5e5$&$5e4$&$1e5$&$5e5$&$5e4$&$1e5$&$5e5$
\\ \hline
p=10& .8 s  & 2 s   & 13 s & .8 s  & 2.1 s  & 13 s  & .8 s & 1.9 s  & 12 s 
\\ 
p=20& 1.9 s & 5.5 s & 32 s& 1.9 s & 4.9 s & 33 s & 1.8 s & 4.5 s & 31 s
 \\ 
 p=30& 3.4 s & 7.9 s & 59 s & 3.5 s & 8.1 s & 59 s & 3.3 s & 7.7 s & 57 s
\end{tabular}
\begin{tabular}{c|ccc}
 \multicolumn{4}{c}{\textbf{Recursive}} \\
$n$&$5e4$&$1e5$&$5e5$
\\ \hline
p=10 & 42 s& 1.6 min  & 6.1 min
\\ 
p=20&1.7 min & 3.5 min & 14.9 min
 \\ 
 p=30&3.2 min & 6.3 min & 28.5 min 
\end{tabular}
\caption{Computational time in seconds (s) or minutes (min) for weighted BM, SV, and recursive estimators.  Monte Carlo standard errors are approximately 2\% of reported times.} \label{table:var1}
\end{table}

\section{Simulation studies} \label{sec:examples}
This section considers four examples to evaluate the finite sample properties of weighted BM estimators.  Our first two examples consider geometrically ergodic Markov chains generated from univariate and multivariate vector auto-regressive models.  The aim here is to compare weighted BM and SV estimators in terms of accuracy since the true value of $\Sigma$ is known.  Our final two examples compare performances of the two estimators on real datasets where the true values are unknown.

\subsection{AR(1) model}
Consider the autoregressive process of order 1 (AR(1)) where
\[
X_{t}=\phi X_{t-1}+\epsilon_{t} \quad \text{ for } t=1,2, \dots
\]
and $\epsilon_{t}$ are i.i.d.\ N(0,1).  For $|\phi|<1$ the Markov chain is geometrically ergodic.  Consider estimating $\theta=E[X_1]=0$ by $\bar{Y}_{n}=\bar{X}_n$, then in the CLT at \eqref{eq:clt} we have 
\[
\Sigma=\text{Var}[X_1]+2\sum_{k=1}^{\infty}\text{Cov}(X_1,X_{1+k})=1/(1-\phi)^2.
\]
A range of $\phi$ from $0.6$ to $0.9$ are evaluated and the true value of $\Sigma$ is used to compare weighted BM and SV estimators.

\begin{figure}[tbh]
    \centering
    \includegraphics[width=0.95\textwidth]{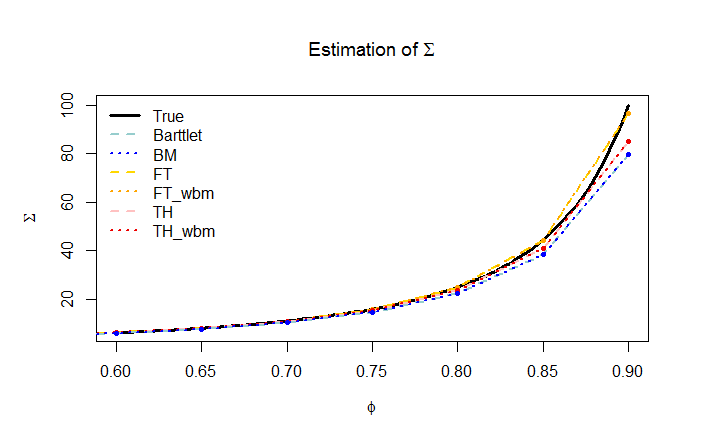}
    \caption{Estimation of $\Sigma$ for AR(1) model with $\phi$ between 0.6 and 0.9. Results are based on 500 independent replications with $n = 1e5$.}
    \label{fig:ar1est}
\end{figure}

For each $\phi$, generate an AR(1) Markov chain of length 1e5 and compute weighted BM and SV estimators using the three lag window functions at \eqref{eq:bartlett}, \eqref{eq:th} and \eqref{eq:ft}.  Truncation point $b$ equals to $\lfloor n^{1/3} \rfloor = 46$ for all six estimators. The procedure is repeated independently for 500 times and the average of 500 replications are shown in Figure~\ref{fig:ar1est}.  When the autocovariance is low, all the estimators perform well.  As $\phi$ increases, the flat top lag window outperforms Bartlett and Tukey-Hanning windows for both SV and weighted BM estimators.

\begin{figure}[tbh]
        \centering
        \begin{subfigure}[t]{0.5\textwidth}
                \includegraphics[width=\textwidth]{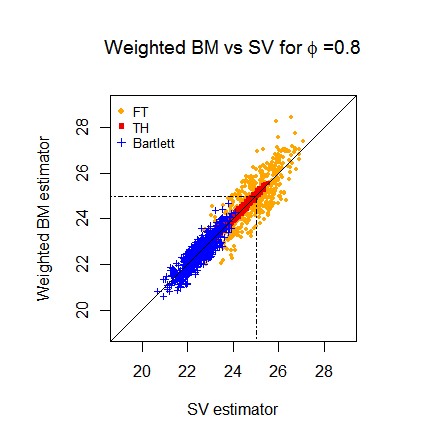}
        \end{subfigure}%
        \begin{subfigure}[t]{0.5\textwidth}
                \includegraphics[width=\textwidth]{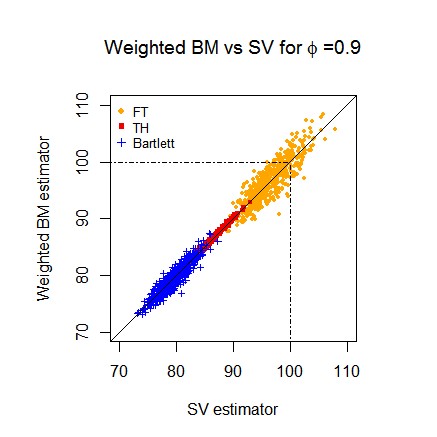}
        \end{subfigure}
        \caption{Weighted BM and SV estimators for $\phi = \{ 0.8, 0.9 \}$ where the true value of $\Sigma$ is denoted by a dashed line.}
        \label{fig:ar1est89}
\end{figure}

Weighted BM and SV estimates with the same lag window are very close as in Figure \ref{fig:ar1est}.  To explore this relationship in more detail, Figure~\ref{fig:ar1est89} plots SV against weighted BM estimates for the same Markov chain realizations when $\phi = \{ 0.8, 0.9 \}$.  Again we see the flat top lag window reduces bias but the variability increases slightly agreeing with our theoretical results.  Another interesting observation is the Tukey-Hanning window estimates are closer to the identity line implying its variance ratio is closer to one.  Figure~\ref{fig:ar1var} verifies this observation showing the Tukey-Hanning ratio is empirically very close to 1.  Figure~\ref{fig:ar1var} also shows the Bartlett and flat top windows variance ratios are close to the theoretical values of $1.5$ and $1.875$, respectively.

\begin{figure}[tbh]
    \centering
    \includegraphics[width=0.8\textwidth]{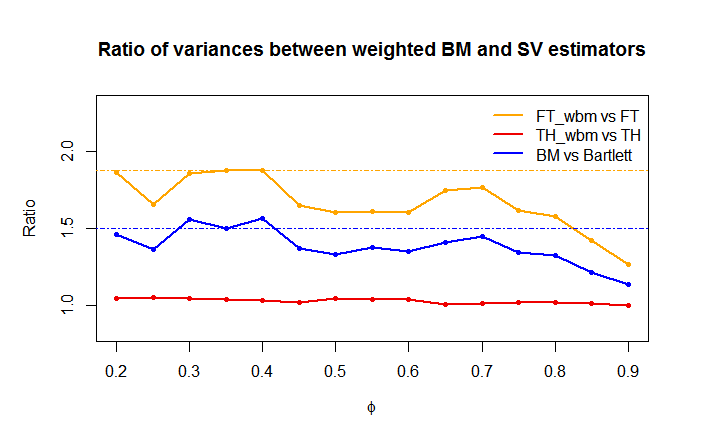}
    \caption{Variance ratios over 500 replications with $n = 1e5$.}
    \label{fig:ar1var}
\end{figure}

\subsection{Vector auto-regressive model}

Consider again the $p$-dimensional VAR(1) from Section~\ref{sec:comptime} where we are now interested in the accuracy of weighted BM and SV estimators.  We now obtain 50 independent replications for each combination of $p \in \{ 10, 20, 30 \}$ and $n \in \{ 1e5, 1e6, 5e6 \}$ for a given $\Phi_0=B/(m+0.1)$ with $B$ and $m$ as in Section~\ref{sec:comptime}.  Finally, let $\Phi=k\cdot\Phi_0$, where $k \in \{0.01, 0.2, 0.4, 0.6, 0.8 \}$. It is easy to see larger $k$ implies stronger autocovariance and cross-correlation in the chain. These $\Phi$ are used to generate geometrically ergodic Markov chains from which $\Sigma$ is estimated.  For an estimator $\hat{\Sigma}$, define $E=\hat{\Sigma}-\Sigma$ and consider mean squared error (MSE) over the entries of $E$ as a measurement of accuracy, i.e.\ 
\[
\text{MSE}=\dfrac{1}{p^2}\sum_i\sum_j e_{ij}^2.
\]
Figure~\ref{fig:var1mse} shows the averaged MSE ratio between weighted BM and SV estimators over 500 replications. Weighted BM estimators have inflated MSE but the ratios are below 2 across $k$.  Ratios for the Bartlett and Tukey-Hanning lag windows have a significant drop for $k=0.8$ resulting from large MSEs.  Flat top window estimators show less of this trend since they are more accurate when $k=0.8$.  Combining computation and accuracy information, weighted BM estimators with flat top lag windows exhibit superior performance.
\begin{figure}[tbh]
    \centering
    \includegraphics[width=0.8\textwidth]{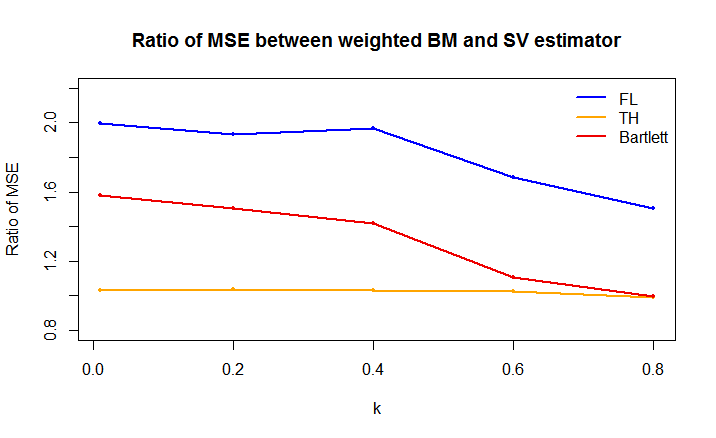}
    \caption{MSE ratio between weighted BM and SV methods.}
    \label{fig:var1mse}
\end{figure}

\subsection{Bayesian dynamic space-time model} \label{sec:dynamic}
Consider monthly temperature data collected at 10 nearby station in northeastern United States in 2000, a subset of {\tt NETemp} data described in {\tt R} package {\tt spBayes} \citep{finl:bana:carl:2007}. A Bayesian dynamic model proposed by \citep{gelf:bane:game:2005} is fitted to the data and the model treats time as discrete and space as continuous variable.

Suppose $y_t$ denotes the temperature observed at location $s$ and time $t$ for $s=1,2,..., N_s$ and $t=1,2,..., N_t$. Let $x_t(s)$ be a $k\times 1$ vector of predictors and $\beta_t$ be a $k\times 1$ coefficient vector, which is a purely time component, and $u_{t}(s)$ denotes a space-time component. The model is 
\[ y_t(s) = {\pmb x}_t (s)^{T} {\pmb \beta}_t + u_t (s) + \epsilon_t (s),\ \  \epsilon_t \sim N(0, \tau_t^2),  \]
\[ {\pmb \beta}_t = {\pmb \beta}_{t-1} + {\pmb \eta}_t; \ \ {\pmb \eta}_t \sim N_p (0, \Sigma_{\eta}), \]
\[ u_t(s) = u_{t-1}(s) + w_t(s); \ \ w_t(s) \sim GP (0, C_t(\cdot, \sigma_{t}^{2}, \phi_{t})) \]
where $GP (0, C_t(\cdot, \sigma_{t}^{2},\phi_t))$ is a spatial Gaussian process where $C_t(s_1, s_2; \sigma_t^2,\phi_t) = \sigma_t^2 \rho (s_1, s_2; \phi_t)$, $\rho (\cdot; \phi)$ is an exponential correlation function with $\phi$ controlling the correlation decay, and $\sigma^2_t$ represents the spatial variance components. The Gaussian spatial process allows closer locations to have higher correlations.  Time effects for both $\pmb \beta_t$ and $u_t(s)$ are characterized by transition equations, delivering a reasonable dependence structure.  Priors follow defaults in the {\tt spDynlM} function of the {\tt spBayes} package.  We are interested in estimating posterior expectations for 185 parameters denoted $\theta=(\pmb \beta_t,\ u_t(s),\ \sigma^2_t,\ \Sigma_\eta,\ \tau^2_t,\ \phi_t)$.  

\begin{table}[htb]
\begin{center}
\begin{tabular}{ccc|ccc|ccc}
\multicolumn{9}{c}{\textbf{Weighted BM}}\\
\multicolumn{3}{c|}{Flat top} & \multicolumn{3}{c|}{BM} & \multicolumn{3}{c}{Tukey-Hanning} 
\\
$5e4$&$1e5$&$2e5$&$5e4$&$1e5$&$2e5$&$5e4$&$1e5$&$2e5$
\\ \hline
.5 s  & .9 s   & 1.9 s & .3 s  & .6 s   & 1.3 s  & 11 s  &  25 s  &  56 s 
\\ 
\multicolumn{9}{c}{\textbf{Spectral Variance}}\\
\multicolumn{3}{c|}{Flat top} & \multicolumn{3}{c|}{Bartlett} & \multicolumn{3}{c}{Tukey-Hanning} 
\\
$5e4$&$1e5$&$2e5$&$5e4$&$1e5$&$2e5$&$5e4$&$1e5$&$2e5$
\\ \hline
52 s & 2.2 min & 5.6 min & 52 s & 2.2 min & 5.6 min & 51 s & 2.2 min & 5.6 min
\end{tabular}
\end{center}
\caption{Computational time in seconds (s) or minutes (min) for weighted BM and SV estimators in the Bayesian dynamic space-time model.}
\label{table:weather}
\end{table}


Again we consider Markov chains of length $5e4$, $1e5$ and $2e5$ and compute computational time ratios in Table~\ref{table:weather}.  For this high-dimensional Bayesian analysis, weighted BM estimators are much cheaper to compute for Bartlett and flat top lag windows.  Figure~\ref{fig:weather} plots estimates of the diagonal elements of $\Sigma$ obtained with weighted BM and SV methods on the log scale.  Since the points are close to the identity line it is clear both methods produce similar estimates.

\begin{figure}[htb]
        \centering
        \begin{subfigure}[b]{0.3\textwidth}
                \includegraphics[width=\textwidth]{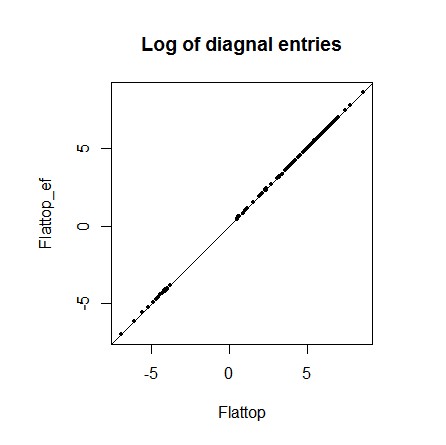}
                \caption{Flattop}
                \label{fig:gull}
        \end{subfigure}
        \begin{subfigure}[b]{0.3\textwidth}
                \includegraphics[width=\textwidth]{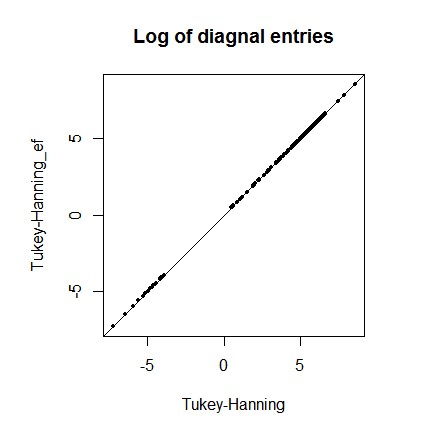}
                \caption{Tukey-Hanning}
                \label{fig:tiger}
        \end{subfigure}
        \begin{subfigure}[b]{0.3\textwidth}
                \includegraphics[width=\textwidth]{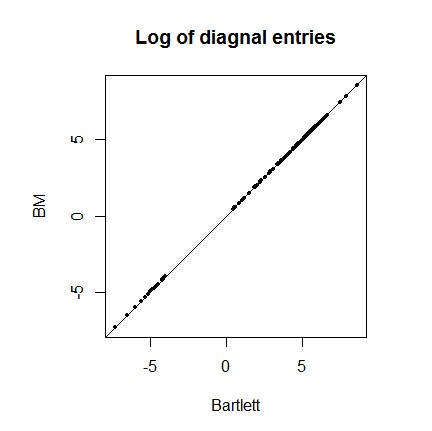}
                \caption{Bartlett}
                \label{fig:mouse}
        \end{subfigure}
        \caption{Estimates of the diagonal elements of $\Sigma$ obtained with weighted BM and SV methods on the log scale.}\label{fig:weather}
\end{figure}

\subsection{Bayesian logistic regression model}
Environmental data of 1000 site observations in New Zealand are considered to study the determinants of presence or absence of the short-finned eel (Anguilla australis) in {\tt R} {\tt dismo} package \citep[see e.g.][]{elit:leat:2008,hijm:2010}. Five continuous variables, SegSumT, DSDist, USNative, DSMaxSlope, and DSSlope, along with a categorical variable Method with five levels (Electric, Spo, Trap, Net, Mixture) are chosen as in \cite{leat:elit:2008} to predict Auguilla australis presence via a Bayesian logistic regression model.

For the $i$th observation, suppose $Y_i=1$ denotes presence and $Y_i=0$ denotes absence of Anguilla australis. Let $x_i$  be a $p\times 1$ covariate vector and ${\pmb \beta}=(\beta_0,\beta_1,...,\beta_9)$ be the $p\times 1$ coefficient vector where $p=10$. Consider the following Bayesian logistic regression model
\[
Y_i\sim Bernoulli(p_i) \text{ and } p_i\sim\dfrac{exp(x_i^T{\pmb \beta})}{1+exp(x_i^T{\pmb \beta})}.
\]
Priors for ${\pmb \beta}$ are chosen to be ${\pmb \beta}\sim N({\pmb 0},\ 100\mathbf{I}_k)$ as in \cite{boon:merr:krac:2014} and the {\tt MCMClogit} function in the {\tt MCMCpack} package is used to sample the Markov chain.

Using methods from \cite{vats:fleg:jone:2015output}, we construct a $90\%$ confidence region of the 10 dimensional parameter ${\pmb \beta}$ based on the BM estimator, the SV estimator with a flat top lag window, and the weighted BM estimator with a flat top lag window. Coverage probabilities of these confidence regions from 1000 repeated simulations are used to evaluate the performance of each method.  Since the true value of ${\pmb \beta}$ is unknown, the average of 500 chains each of length 1e6 is used as the ``truth''.  Table~\ref{table:dismo} shows the coverage probabilities for chains of length $1e4$, $5e4$, $1e5$, and $5e5$ and two different batch sizes.  When $b=\lfloor n^{1/3} \rfloor$, both estimators based on a flat top window are far superior to the BM estimator.  When $b= \lfloor n^{1/2} \rfloor$, the improvement remains but is less substantial.

\begin{table}[h]
\centering
\begin{tabular}{l|cccc|cccc}
 & \multicolumn{4}{c|}{$b=\lfloor n^{1/3} \rfloor$} & \multicolumn{4}{c}{$b=\lfloor n^{1/2} \rfloor$} 
\\
 &$1e4$&$5e4$&$1e5$&$5e5$&$1e4$&$5e4$&$1e5$&$5e5$
\\ \hline
Weighted BM &0.342   &0.657  & 0.741  &  0.853& 0.714  & 0.825  &0.849 & 0.868   
\\ 
BM &0.168 & 0.400 & 0.506& 0.705 & 0.643 & 0.806&0.835  & 0.868 
 \\ 
SV &0.368 & 0.655 & 0.739 &0.858 & 0.764 & 0.844&0.866 & 0.879
\end{tabular}
\caption{Observed coverage probabilities of confidence regions for the Bayesian logistic regression model.  Nominal level is 0.90 and Monte Carlo standard errors range from 0.01 to 0.016.
} \label{table:dismo}
\end{table}

\section{Discussion} \label{sec:discussion}
This paper considers a family of weighted BM estimators and obtains conditions for strong consistency. These estimators are fast to compute and comparable to existing SV estimators in terms of accuracy. Within this family, we advocate the flat top weighted BM estimator which is a computationally efficient robust estimator. Other estimators considered either require heavy computation or yield less desirable finite sample properties. By targeting both accuracy and computing effort, the advocated estimator is convenient to apply in modern high-dimensionally MCMC simulations.  The proposed estimator can be further incorporated in a sequential stopping rule, where strong consistency and computational efficiency are necessary.  Computational efficiency is also helpful when calculating valid asymptotic standard errors for the generalized importance sampling estimator \citep{roy:tan:fleg:2018, roy2018selection}.

Computational complexity of the flat top weighted BM estimator is $O(n)$ since it can be expressed as a difference between BM estimators \citep[see e.g.][]{alex:fish:seila:1997}.  Memory complexity will also be $O(n)$ if $b$ is allowed to to increase by ones since the entire chain must be stored.  \cite{gong:fleg:2016} propose a low-cost alternative sampling plan that satisfies conditions necessary for strong consistency.  Specifically, they set $b = \inf \left\{ 2^k : 2^k \ge n^{\nu}, k \in \mathbb{Z}^+ \right\}$ so that $b$ increases by doubling the batch size.  It is then possible to store only the batch means and merge every two batches when the batch size increases twofold.  Such a sampling plan reduces memory complexity to $O(a)$.  Moreover, the estimator can be updated in $O(1)$ computational steps using a recursive variance calculation.

The weighted BM estimator with a Tukey-Hanning window does not reduce computational time significantly. As mentioned earlier, the proposed estimators are more beneficial for lag window functions with $\Delta_{2}w_{n}(k)=0$ for certain $k$. Nevertheless, as dimension and chain length increases, weighted BM still save some computing time.  More efficient coding or a linear approximation could provide additional efficiency if one prefers the Tukey-Hanning window. 

All estimators in this article use the same truncation point for a fair comparison, which are not necessarily the best.  In fact, finite sample coverage probabilities should improve by choosing better truncation points as suggested by \cite{liu:fleg:2018}.  These optimal truncation points are those that minimize the asymptotic mean-squared error. If one wants to omit further exploration, we suggest using the same truncation point for weighted BM estimators as the corresponding SV estimators.  \cite{fleg:jone:2010} provide an illustrative example regarding optimal batch sizes for BM and overlapping BM. For flat top windows, \cite{poli:2003} suggests an empirical rule for the optimal truncation point selection.  However, optimal batch size is a direction of future research for weighted BM estimators.



\section*{Acknowledgments}

We are grateful to the anonymous referees, anonymous associate editor, Galin Jones, and Dootika Vats for their constructive comments and helpful discussions, which resulted in many improvements to this paper.

\begin{appendix}
\section{Preliminaries for Theorem~\ref{thm:consistency}}

We first introduce some notations and propositions. Recall $B=\{B(t),t\geq 0\}$ is a $p$-dimensional standard Brownian motion. Denote $\bar{B}=n^{-1}B(n)$, $\bar{B}_{l}(k)=k^{-1}[B(lk+k)-B(lk)]$ and $\dot{B}_{l}(k)=k^{-1}[B(l+k)-B(l)]$. The Brownian motion counterpart of $\hat{\Sigma}_{w}$ is 
\[
\tilde{\Sigma}_{w}=\sum_{k=1}^{b}\dfrac{1}{a_{k}-1}\sum_{l=0}^{a_{k}-1}k^{2}\Delta_{2}w_{n}(k)(\bar{B}_{l}(k)-\bar{B})(\bar{B}_{l}(k)-\bar{B})^{T}
\]
where the individual entries are denoted as $\tilde{\Sigma}_{w,ij}$.  

Recall $L$ is the lower triangular matrix satisfying $\Sigma=LL^{T}$ and denote $\Sigma_{ij}$ as the individual entries of $\Sigma$.  Finally define $C(t)=LB(t)$, $C^{(i)}(t)$ as the $i$th component of $C(t)$, $\bar{C}^{(i)}_{l}(k)=k^{-1}(C^{(i)}(l+k)-C^{(i)}(l))$, and $\bar{C}^{(i)}=n^{-1}C^{(i)}(n)$.

\begin{proposition} \label{prop:one} \citep[Corollary 1,][]{vats:fleg:jone:2018} Suppose Assumption~\ref{ass:batch} holds.  For all $\epsilon>0$ and for almost all sample paths there exists $n_{0}(\epsilon)$ such that for all $n\geq n_{0}$ and all $i=1,..., p$
\[
\left| C^{(i)}(n) \right| < (1+\epsilon)(2n\Sigma_{ii} \log \log n)^{1/2}.
\]
\end{proposition}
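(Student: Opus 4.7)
The statement is essentially the Law of the Iterated Logarithm (LIL) applied componentwise to the Gaussian process $C(t)=LB(t)$, together with a finite union over the $p$ coordinates. Assumption~\ref{ass:batch} plays no active role in the argument; it is inherited from the surrounding framework.

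The plan is as follows. First I would identify the law of each marginal process $C^{(i)}$. Writing $C^{(i)}(t)=\sum_{j=1}^{p}L_{ij}B^{(j)}(t)$, where the $B^{(j)}$ are independent standard one-dimensional Brownian motions, the process $C^{(i)}$ is a centered continuous Gaussian martingale with independent Gaussian increments. Its variance at time $t$ is
\[
\operatorname{Var}\bigl(C^{(i)}(t)\bigr)=\sum_{j=1}^{p}L_{ij}^{2}\,t=(LL^{T})_{ii}\,t=\Sigma_{ii}\,t.
\]
Since $\Sigma$ is positive definite (assumed throughout the paper), $\Sigma_{ii}>0$ and the rescaled process $\widetilde{B}^{(i)}(t):=C^{(i)}(t)/\sqrt{\Sigma_{ii}}$ is a centered continuous Gaussian process with independent increments of unit variance rate, hence a standard one-dimensional Brownian motion by Lévy's characterization (or directly by checking covariances).

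Next I would invoke the classical Law of the Iterated Logarithm for standard Brownian motion: almost surely,
\[
\limsup_{t\to\infty}\frac{|\widetilde{B}^{(i)}(t)|}{\sqrt{2t\log\log t}}=1.
\]
Consequently, for each fixed $\epsilon>0$ and each $i$, there exists a finite (random) $n_{0}^{(i)}(\epsilon,\omega)$ such that for all $n\geq n_{0}^{(i)}(\epsilon,\omega)$,
\[
|\widetilde{B}^{(i)}(n)|<(1+\epsilon)\sqrt{2n\log\log n},
\]
which on multiplication by $\sqrt{\Sigma_{ii}}$ yields the claimed bound on $|C^{(i)}(n)|$.

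Finally I would pass from componentwise to uniform-in-$i$: because there are only finitely many coordinates $i=1,\ldots,p$, the intersection of the $p$ full-measure events on which the LIL bound holds is itself a full-measure event, and on this event one may take $n_{0}(\epsilon,\omega):=\max_{1\leq i\leq p}n_{0}^{(i)}(\epsilon,\omega)$, which is finite. This gives the uniform conclusion of the proposition. There is no real obstacle here beyond correctly identifying each $C^{(i)}$ as a scaled Brownian motion so that the one-dimensional LIL applies; once that is in hand, the statement follows from the classical LIL and the finiteness of $p$. (Alternatively, one can simply appeal to \citeauthor{vats:fleg:jone:2018}'s Corollary 1, which is exactly this result.)
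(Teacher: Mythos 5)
Your proposal is correct. Note that the paper itself gives no proof of this proposition: it is stated purely as a quotation of Corollary 1 of \cite{vats:fleg:jone:2018}, so there is no in-paper argument to compare against. Your self-contained derivation---identifying each $C^{(i)}(t)/\sqrt{\Sigma_{ii}}$ as a standard one-dimensional Brownian motion (using positive definiteness of $\Sigma$ so that $\Sigma_{ii}>0$), applying the classical law of the iterated logarithm, and then intersecting the $p$ full-measure events and taking $n_{0}=\max_{i}n_{0}^{(i)}$---is exactly the standard route underlying the cited corollary, and your observation that Assumption~\ref{ass:batch} is inert here is also accurate, since the bound makes no reference to the batch size $b$.
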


\begin{proposition} \label{prop:sup} \citep[Corollary 2,][]{vats:fleg:jone:2018} Suppose Assumption~\ref{ass:batch} holds.  For all $\epsilon>0$ and for almost all sample paths, there exists $n_{0}(\epsilon)$ such that for all $n\geq n_{0}$ and all $i=1,..., p$
\[
\left| \bar{C}^{(i)}_{l}(k) \right| \leq\dfrac{1}{k}\sup_{0\leq l\leq n-b}\sup_{0\leq s\leq b}|C^{(i)}(l+s)-C^{(i)}(l)|<\dfrac{1}{k}2(1+\epsilon)(b\Sigma_{ii} \log n)^{1/2}.
\]
\end{proposition}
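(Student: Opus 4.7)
The first inequality in the display is immediate: since $k\le b$, the single increment $|C^{(i)}(l+k)-C^{(i)}(l)|$ is dominated by the double supremum on the right. All the content is in the second inequality, which is an almost-sure upper bound on the maximal Brownian increment over all windows of length at most $b$ inside $[0,n]$. The starting observation is that, componentwise, $C^{(i)}(t)=\sum_{j}L_{ij}B^{(j)}(t)$ is a scalar Brownian motion with variance $\sum_{j}L_{ij}^{2}=(LL^{T})_{ii}=\Sigma_{ii}$ per unit time. So after dividing by $\sqrt{\Sigma_{ii}}$ the problem reduces to the same statement for a standard scalar Brownian motion $W$, and the $\sqrt{\Sigma_{ii}}$ reappears at the end.

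My plan is a Borel--Cantelli argument applied along a geometric subsequence, filled in by monotonicity. For each fixed $l$, the reflection principle gives
\[
P\!\left(\sup_{0\le s\le b}|W(l+s)-W(l)|\ge x\right)=2\,P(|W(b)|\ge x)\le 4\exp(-x^{2}/(2b)).
\]
Taking $x=2(1+\epsilon/2)\sqrt{b\log n}$ makes the right-hand side at most $4n^{-2(1+\epsilon/2)^{2}}$. A union bound over integer $l\in\{0,1,\dots,n-b\}$ (at most $n$ terms) then bounds the probability that the supremum of $|W(l+s)-W(l)|$ over integer shifts and $s\in[0,b]$ exceeds $x$ by $4n^{1-2(1+\epsilon/2)^{2}}$. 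To reduce the continuous-$l$ sup to the integer grid, I would use a standard modulus-of-continuity estimate for $W$ on $[0,n]$ (the oscillation of $W$ on any interval of length $1$ inside $[0,n]$ is $O(\sqrt{\log n})$ almost surely), which is negligible compared to $\sqrt{b\log n}$ under Assumption~\ref{ass:batch} because $b\to\infty$.

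Since these probabilities decay like $n^{-\gamma}$ with $\gamma>1$ for any $\epsilon>0$, they are summable along a subsequence $n_{m}=\lceil(1+\delta)^{m}\rceil$ (in fact already summable along every $n$ once $\gamma>1$, but a subsequence is safer and suffices). Borel--Cantelli then yields the bound almost surely along $n_{m}$. To promote it to every $n$, I would use the monotonicity pieces of Assumption~\ref{ass:batch}: for $n_{m}\le n<n_{m+1}$ the relevant double supremum is monotone in $n$, while $b$ and $\log n$ change by a factor at most $1+\delta+o(1)$ across one block. Choosing $\delta$ small enough compared with $\epsilon$ lets the extra factor be absorbed, upgrading $2(1+\epsilon/2)$ to $2(1+\epsilon)$ as stated. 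Multiplying back by $\sqrt{\Sigma_{ii}}$ recovers the inequality for $C^{(i)}$.

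The main obstacle is not the Gaussian tail estimate itself but the bookkeeping of the three suprema: over $s\in[0,b]$, over $l$ (reducing to an integer grid via a modulus-of-continuity bound that must be dominated by $\sqrt{b\log n}$), and over $n$ (via the subsequence-plus-monotonicity argument). Assumption~\ref{ass:batch}, specifically that $b$ and $n/b$ are nondecreasing and $b\to\infty$, is precisely what makes both the modulus-of-continuity correction and the blocking correction negligible, and without it the constant $2(1+\epsilon)$ would have to be enlarged.
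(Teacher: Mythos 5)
Your proof is essentially correct, but it takes a genuinely different route from the paper, which in fact offers no proof at all: Proposition~2 is imported verbatim as Corollary~2 of Vats, Flegal and Jones (2018), whose own argument rests on the classical Cs\"{o}rg\H{o}--R\'{e}v\'{e}sz theory of increments of Brownian motion. Your argument instead builds the bound from scratch, and the pieces fit: the reduction of $C^{(i)}$ to a scalar Brownian motion with variance $\Sigma_{ii}$ per unit time is exactly right; the reflection-principle tail bound with $x = 2(1+\epsilon/2)\sqrt{b\log n}$ gives per-window probability at most $4n^{-2(1+\epsilon/2)^{2}}$; the union bound over the at most $n$ integer starting points yields $4n^{1-2(1+\epsilon/2)^{2}}$, which is summable in $n$ since the exponent is below $-1$, so Borel--Cantelli applies directly (as you concede, the geometric subsequence is unnecessary); and the continuous-$l$ supremum is recovered from the integer grid by a modulus-of-continuity correction of order $\sqrt{\log n} = o(\sqrt{b\log n})$, which uses only $b \to \infty$ from Assumption~\ref{ass:batch}. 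Two quibbles. First, the reflection-principle display should be an inequality, $P\left(\sup_{0\le s\le b}|W(l+s)-W(l)|\ge x\right)\le 2P(|W(b)|\ge x)$, not an equality (the exact law of the two-sided supremum is an alternating series), though your final bound $4\exp(-x^{2}/(2b))$ remains valid. Second, your closing claim that the monotonicity clauses of Assumption~\ref{ass:batch} are what preserve the constant $2(1+\epsilon)$ overstates their role in your own argument: once you invoke direct summability rather than blocking, monotonicity of $b$ and $n/b$ is never used, and only $b\to\infty$ matters. As for what each route buys: citing Cs\"{o}rg\H{o}--R\'{e}v\'{e}sz, as the paper does, costs nothing and actually delivers a sharper bound, roughly $\sqrt{2}(1+\epsilon)\left(b\left[\log(n/b)+\log\log n\right]\right)^{1/2}$ in place of $2(1+\epsilon)(b\log n)^{1/2}$, whereas your elementary proof is self-contained, needs nothing beyond Gaussian tails and Borel--Cantelli, and makes transparent where the constant $2$ and the $\log n$ come from.
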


\section{Proof of Theorem~\ref{thm:consistency}} \label{proof:theo1}

Theorem~\ref{thm:consistency} follows from Lemmas~\ref{lemma:tilde} and~\ref{lemma:hat} presented below.

\begin{lemma}\label{lemma:tilde}
Suppose Assumption~\ref{ass:batch} holds. If there exists a constant $c \geq 1$ such that $\sum_{n}(b/n)^{c}<\infty$ and \eqref{eq:cond1} holds, then $\tilde{\Sigma}_{w}\rightarrow\ I_{p}$ as $n \to \infty$ w.p.1 where $I_{p}$ is the $p\times p$ identity matrix.
\end{lemma}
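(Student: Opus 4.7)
I would work one entry at a time: fix indices $i,j$ and show $\tilde{\Sigma}_{w,ij} \to \delta_{ij}$ almost surely, which by finiteness of $p$ gives $\tilde{\Sigma}_w \to I_p$ w.p.\ 1. The first step is the algebraic decomposition $(\bar{B}_l(k)-\bar{B})(\bar{B}_l(k)-\bar{B})^{T} = \bar{B}_l(k)\bar{B}_l(k)^{T} - \bar{B}_l(k)\bar{B}^{T} - \bar{B}\bar{B}_l(k)^{T} + \bar{B}\bar{B}^{T}$, which splits
\[
\tilde{\Sigma}_{w,ij} \;=\; M_{n,ij} \;+\; R_{n,ij},\qquad M_{n,ij} := \sum_{k=1}^{b} \frac{k^{2}\Delta_{2}w_{n}(k)}{a_{k}-1}\sum_{l=0}^{a_{k}-1}\bar{B}_{l}^{(i)}(k)\bar{B}_{l}^{(j)}(k),
\]
with $R_{n,ij}$ collecting the three cross terms involving $\bar{B}$.

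\textit{Step 1 (remainder).} Proposition~\ref{prop:one} applied to $L=I_{p}$ yields $|\bar{B}^{(i)}| = O((\log\log n/n)^{1/2})$ a.s.; Proposition~\ref{prop:sup} yields the uniform bound $\sup_{l}|\bar{B}_{l}^{(i)}(k)| \leq 2(1+\epsilon)k^{-1}(b\log n)^{1/2}$. Inserting these into $R_{n,ij}$ produces a bound involving the weight sums $\sum_{k=1}^{b}k\,|\Delta_{2}w_{n}(k)|$ and $\sum_{k=1}^{b}k^{2}|\Delta_{2}w_{n}(k)|$. A summation-by-parts using $w_{n}(0)=1$ and $w_{n}(k)=0$ for $k\geq b$ controls both: the first sum is $O(1)$ (condition \eqref{eq:cond1} already provides $\sum_{k} k\Delta_{2}w_{n}(k)=1$), and the second is $O(b)$. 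Combined with $b/n\to 0$ from Assumption~\ref{ass:batch}, this yields $R_{n,ij}\to 0$ a.s.

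\textit{Step 2 (mean of $M_{n,ij}$).} For each fixed $k$ the batch means $\{\bar{B}_{l}(k): l=0,\ldots,a_{k}-1\}$ are i.i.d.\ $N_{p}(0,k^{-1}I_{p})$, so $E[k\bar{B}_{l}^{(i)}(k)\bar{B}_{l}^{(j)}(k)]=\delta_{ij}$. Therefore $E[M_{n,ij}]=\delta_{ij}\sum_{k=1}^{b} k\Delta_{2}w_{n}(k)\cdot a_{k}/(a_{k}-1)$. Using $\max_{k\leq b}|a_{k}/(a_{k}-1)-1|=O(b/n)$ together with \eqref{eq:cond1} gives $E[M_{n,ij}]=\delta_{ij}+o(1)$.

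\textit{Step 3 (fluctuation; Borel--Cantelli).} Let $U_{k}:=(a_{k}-1)^{-1}\sum_{l}(k\bar{B}_{l}^{(i)}(k)\bar{B}_{l}^{(j)}(k)-\delta_{ij})$ and $F_{n}:=\sum_{k=1}^{b} k\Delta_{2}w_{n}(k)\,U_{k}$. Each $U_{k}$ is a centered mean of $a_{k}$ i.i.d.\ bounded-moment quadratic forms in standard Gaussians, giving $\mathrm{Var}(U_{k}) = O(1/a_{k}) = O(k/n)$; Cauchy--Schwarz then bounds $|\mathrm{Cov}(U_{k},U_{k'})|\leq C\sqrt{kk'}/n$. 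Hence
\[
\mathrm{Var}(F_{n}) \;\leq\; \frac{C}{n}\left(\sum_{k=1}^{b} k^{3/2}|\Delta_{2}w_{n}(k)|\right)^{2} = O(b/n),
\]
the weight sum being $O(\sqrt{b})$ by another summation-by-parts. Upgrade to $E[F_{n}^{2m}]=O((b/n)^{m})$ via Rosenthal's inequality applied to the i.i.d.\ decomposition of each $U_{k}$; choose $m\geq c$ so that Chebyshev combined with the hypothesis $\sum_{n}(b/n)^{c}<\infty$ triggers Borel--Cantelli and yields $F_{n}\to 0$ a.s.

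\textit{Main obstacle.} The delicate point is Step~3: the $U_{k}$ are functionals of a single Brownian motion, so cross-covariances do not vanish and must be tracked. The accompanying summation-by-parts estimates of $\sum_{k} k^{s}|\Delta_{2}w_{n}(k)|$ for $s=1,3/2,2$ are what allow the variance bound to be sharp enough, and the hypothesis $\sum_{n}(b/n)^{c}<\infty$ is calibrated exactly to close the Borel--Cantelli argument after the $(2m)$-th moment of $F_{n}$ has been controlled.
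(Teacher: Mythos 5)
Your route is genuinely different from the paper's. The paper performs the same entrywise expansion of $(\bar B_l(k)-\bar B)(\bar B_l(k)-\bar B)^T$ that you do, but then disposes of the resulting strong laws by citation: for the diagonal entries it invokes the proof of Proposition 3.1 of Damerdji (1994) to get $\frac{1}{a_k}\sum_l k\,\bar B_l^{(i)}(k)^2\to 1$ and $k(\bar B^{(i)})^2\to 0$ w.p.\ 1, and for the off-diagonal entries it invokes Lemma 3 of Vats, Flegal and Jones (2015), after which \eqref{eq:cond1} delivers the limits $1$ and $0$. You instead rebuild those strong laws from scratch: independence of the nonoverlapping Brownian increments for fixed $k$, an exact mean computation, an $L^{2m}$ bound on the fluctuation, Chebyshev, and Borel--Cantelli calibrated by $\sum_n(b/n)^c<\infty$. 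That is essentially how the cited lemmas are themselves proved, so your plan is self-contained where the paper is not, at the cost of redoing substantial work. Your Step 3 is also somewhat overengineered: $\lVert F_n\rVert_{2m}\le\sum_k k\lvert\Delta_2w_n(k)\rvert\,\lVert U_k\rVert_{2m}$ by the triangle inequality in $L^{2m}$ already yields the $O((b/n)^{1/2})$ rate without tracking cross-covariances between the $U_k$.

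The one genuine gap is your treatment of the absolute weight sums. You assert that summation by parts gives $\sum_k k\lvert\Delta_2w_n(k)\rvert=O(1)$, $\sum_k k^{3/2}\lvert\Delta_2w_n(k)\rvert=O(\sqrt b)$, and $\sum_k k^2\lvert\Delta_2w_n(k)\rvert=O(b)$, on the grounds that \eqref{eq:cond1} provides $\sum_k k\Delta_2w_n(k)=1$. But \eqref{eq:cond1} controls only the signed sum; the telescoping that produces it does not survive the absolute values, and the standing assumptions on $w_n$ (bounded by $1$, $w_n(0)=1$, vanishing beyond $b$) only force $\lvert\Delta_2w_n(k)\rvert\le 4$, hence $\sum_k k\lvert\Delta_2w_n(k)\rvert=O(b^2)$ in the worst case. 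Your remainder bound in Step 1, the $E[M_{n,ij}]=\delta_{ij}+o(1)$ claim in Step 2 (which needs $\sum_k k\lvert\Delta_2w_n(k)\rvert=o(n/b)$ to absorb the $a_k/(a_k-1)$ correction), and the variance bound in Step 3 all hinge on these unproved estimates. The estimates do hold for every window the paper actually analyzes (Bartlett, Bartlett flat top, Tukey--Hanning, Parzen), and in fairness the paper's own proof silently interchanges the $n\to\infty$ limit with a sum over $k\le b_n\to\infty$ terms, which requires the same kind of uniform control. As written, though, your argument needs either an additional hypothesis of the form $\sum_k k\lvert\Delta_2w_n(k)\rvert=O(1)$ (verified for the windows of interest), or a rearrangement that works with the signed sum directly.
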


\begin{proof}
First consider the diagonal elements of $\tilde{\Sigma}_{w}$ which go to 1 as $n \to \infty$.  For $i=j$
\begin{align*}
\tilde{\Sigma}_{w,ii}&=\sum_{k=1}^{b}\dfrac{a_{k}}{a_{k}-1}\left(\dfrac{1}{a_{k}}\sum_{l=0}^{a_{k}-1}k^{2}\Delta_{2}w_{n}(k)(\bar{B}_{l}^{(i)}(k)-\bar{B}^{(i)})^{2}\right)\\
&=\sum_{k=1}^{b}\dfrac{a_{k}}{a_{k}-1}\left(\dfrac{1}{a_{k}}\sum_{l=0}^{a_{k}-1}k^{2}\Delta_{2}w_{n}(k)(\bar{B}^{(i)}_{l}(k)^{2}+(\bar{B}^{(i)})^{2}-2\bar{B}^{(i)}_{l}(k)\bar{B}^{(i)})\right)
 \\
 &=\sum_{k=1}^{b}\dfrac{a_{k}}{a_{k}-1}\left(\dfrac{1}{a_{k}}\sum_{l=0}^{a_{k}-1}k^{2}\Delta_{2}w_{n}(k)\bar{B}^{(i)}_{l}(k)^{2}+\dfrac{1}{a_{k}}a_{k}(\bar{B}^{(i)})^{2}k^{2}\Delta_{2}w_{n}(k) \right. \\
& \quad \quad \quad \quad \quad \quad \left. - \dfrac{2}{a_{k}}\bar{B}^{(i)}k^{2}\Delta_{2}w_{n}(k)\sum_{l=0}^{a_{k}-1}\bar{B}^{(i)}_{l}(k)\right)\\
 &=\sum_{k=1}^{b}\dfrac{a_{k}}{a_{k}-1}\left(\dfrac{1}{a_{k}}\sum_{l=0}^{a_{k}-1}k^{2}\Delta_{2}w_{n}(k)\bar{B}^{(i)}_{l}(k)^{2}+(\bar{B}^{(i)})^{2}k^{2}\Delta_{2}w_{n}(k) \right. \\
& \quad \quad \quad \quad \quad \quad \left. -\dfrac{2}{a_{k}}\bar{B}^{(i)}k^{2}\Delta_{2}w_{n}(k)\dfrac{n}{k}\bar{B}^{(i)}\right)\\
  &=\sum_{k=1}^{b}\dfrac{a_{k}}{a_{k}-1}\left(\dfrac{1}{a_{k}}\sum_{l=0}^{a_{k}-1}k^{2}\Delta_{2}w_{n}(k)\bar{B}^{(i)}_{l}(k)^{2}-(\bar{B}^{(i)})^{2}k^{2}\Delta_{2}w_{n}(k)\right)\\
&=\sum_{k=1}^{b}\dfrac{a_{k}}{a_{k}-1}\left[k\Delta_{2}w_{n}(k)\left(\dfrac{1}{a_{k}}\sum_{l=0}^{a_{k}-1}k\bar{B}^{(i)}_{l}(k)^{2}-k(\bar{B}^{(i)})^{2}\right)\right].
\end{align*}
By the proof of Proposition 3.1 in \cite{dame:1994} (p.\ 507), as $n \to \infty$ w.p.1
\[
\dfrac{1}{a_{k}}\sum_{l=0}^{a_{k}-1}k\bar{B}^{(i)}_{l}(k)^{2}\rightarrow 1 \text{ and } k(\bar{B}^{(i)})^{2}\rightarrow 0.
\]
Therefore as $n \to \infty$ w.p.1
\[
\tilde{\Sigma}_{w,ii} \to \sum_{k=1}^{b}k\Delta_{2}w_{n}(k)
\]
and since \eqref{eq:cond1} holds, $\tilde{\Sigma}_{w,ii} \to 1$ as $n \to \infty$ w.p.1.

Now consider the off-diagonal elements of $\tilde{\Sigma}_{w}$ which go to 0 as $n \to \infty$.  Here $i\neq j$ and
\begin{align*}
\tilde{\Sigma}_{w,ij}&=\sum_{k=1}^{b}\dfrac{1}{a_{k}-1}\sum_{l=0}^{a_{k}-1}k^{2}\Delta_{2}w_{n}(k)(\bar{B}_{l}^{(i)}(k)-\bar{B}^{(i)})(\bar{B}_{l}^{(j)}(k)-\bar{B}^{(j)})\\
&=\sum_{k=1}^{b}k\Delta_{2}w_{n}(k)\dfrac{k}{a_{k}-1}\sum_{l=0}^{a_{k}-1} \left[ \bar{B}^{(i)}_{l}(k)\bar{B}^{(j)}_{l}(k)-\bar{B}^{(i)}_{l}(k)\bar{B}^{(j)} \right. \\
& \quad \quad \quad \quad \quad \quad \left. -\bar{B}^{(i)}\bar{B}^{(j)}_{l}(k)+\bar{B}^{(i)}\bar{B}^{(j)} \right].
\end{align*}
By Lemma 3 in \cite{vats:fleg:jone:2015output}, as $n \to \infty$ w.p.1
\[
\dfrac{k}{a_{k}-1}\sum_{l=0}^{a_{k}-1}[\bar{B}^{(i)}_{l}(k)\bar{B}^{(j)}_{l}(k)-\bar{B}^{(i)}_{l}(k)\bar{B}^{(j)}-\bar{B}^{(i)}\bar{B}^{(j)}_{l}(k)+\bar{B}^{(i)}\bar{B}^{(j)}]\rightarrow 0.
\]
Then since \eqref{eq:cond1} holds, $\tilde{\Sigma}_{w,ij} \to 0$ as $n \to \infty$ w.p.1.  Hence $\tilde{\Sigma}_{w}\to I_{p}$ as $n \to \infty$ w.p.1.
\end{proof}

\begin{lemma}\label{lemma:hat}
Let Assumptions~\ref{ass:sip} and~\ref{ass:batch} hold. If as $n\to \infty$,
\begin{equation} \label{eq:lemma2.1}
b\psi(n)^{2} \log n\left(\sum_{k=1}^{b}|\Delta_{2}w_{n}(k)|\right)^{2}\rightarrow 0,
\end{equation}
and
\begin{equation} \label{eq:lemma2.2}
\psi(n)^{2}\sum_{k=1}^{b}|\Delta_{2}w_{n}(k)|\rightarrow 0,
\end{equation}
then $\hat{\Sigma}_{w}\rightarrow L\tilde{\Sigma}_{w}L^{T}$ as $n \to \infty$ w.p.1.
\end{lemma}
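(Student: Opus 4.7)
The plan is to establish the matrix convergence $\hat{\Sigma}_{w}-L\tilde{\Sigma}_{w}L^{T}\to 0$ w.p.1 entrywise. First notice that because $C(t)=LB(t)$, sandwiching $\tilde{\Sigma}_{w}$ between $L$ and $L^{T}$ replaces each factor $\bar{B}_{l}(k)-\bar{B}$ by $\bar{C}_{l}(k)-\bar{C}$. Hence $L\tilde{\Sigma}_{w}L^{T}$ has exactly the same functional form as $\hat{\Sigma}_{w}$, but with $\bar{Y}_{l}(k)-\bar{Y}$ replaced by $\bar{C}_{l}(k)-\bar{C}$. So it suffices to show that these two versions differ by $o(1)$ componentwise.

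To do so, set $S_{n}=\sum_{t=1}^{n}Y_{t}$ and $r(n)=S_{n}-n\theta-C(n)$. Assumption~\ref{ass:sip} together with monotonicity of $\psi$ gives $|r(m)|<D(\omega)\psi(n)$ for every $m\leq n$ and all $n$ large enough. Elementary algebra then yields
\[
\bar{Y}_{l}(k)-\bar{Y}=\bar{C}_{l}(k)-\bar{C}+\epsilon_{l}(k),\qquad \epsilon_{l}(k):=k^{-1}\bigl(r(lk+k)-r(lk)\bigr)-n^{-1}r(n),
\]
so componentwise $|\epsilon_{l}^{(i)}(k)|\leq 3D\psi(n)/k$ (using $k\leq n$). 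I then expand $(\bar{Y}_{l}(k)-\bar{Y})(\bar{Y}_{l}(k)-\bar{Y})^{T}$ to produce three error pieces in $\hat{\Sigma}_{w}-L\tilde{\Sigma}_{w}L^{T}$: two mixed terms of the form $\epsilon_{l}(k)(\bar{C}_{l}(k)-\bar{C})^{T}$ (and its transpose) plus one pure error term $\epsilon_{l}(k)\epsilon_{l}(k)^{T}$.

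Next, Proposition~\ref{prop:sup} combined with Proposition~\ref{prop:one} yields $|\bar{C}_{l}^{(i)}(k)-\bar{C}^{(i)}|\leq C_{1}(b\log n)^{1/2}/k$ uniformly in $l$ and $k\leq b$, w.p.1 for $n$ large. Substituting these bounds entrywise and using $a_{k}/(a_{k}-1)\leq 2$ for $n$ large (Assumption~\ref{ass:batch} gives $a_{k}\geq\lfloor n/b\rfloor\to\infty$), the mixed-term contribution to any entry of $\hat{\Sigma}_{w}-L\tilde{\Sigma}_{w}L^{T}$ is bounded by a constant multiple of
\[
\psi(n)(b\log n)^{1/2}\sum_{k=1}^{b}|\Delta_{2}w_{n}(k)|,
\]
whose square is precisely condition~\eqref{eq:lemma2.1}. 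Likewise the $\epsilon\epsilon^{T}$ contribution is bounded by a constant times $\psi(n)^{2}\sum_{k=1}^{b}|\Delta_{2}w_{n}(k)|$, which vanishes by~\eqref{eq:lemma2.2}.

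The main obstacle will be the bookkeeping in the third step: the $k^{2}$ factor and the two separate $1/k$ factors (one from $\epsilon_{l}(k)$, one from $\bar{C}_{l}(k)-\bar{C}$) must cancel cleanly so that the residual weight is exactly $|\Delta_{2}w_{n}(k)|$ and the remaining sum matches the hypothesized summability quantities. A minor technicality is that only finitely many boundary indices (those with $lk\leq n_{0}$, or with small $a_{k}$) need separate attention; these are absorbed into the random constants and do not affect the limit.
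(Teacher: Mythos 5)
Your proposal is correct and follows essentially the same route as the paper's proof: both replace $\bar{Y}_l(k)-\bar{Y}$ by its Brownian counterpart plus an error controlled by the strong invariance principle, bound the Brownian pieces via Propositions~\ref{prop:one} and~\ref{prop:sup}, and invoke \eqref{eq:lemma2.1} for the mixed terms and \eqref{eq:lemma2.2} for the pure error term. The only difference is bookkeeping --- you collapse the paper's four-vector decomposition ($A_k$, $LD_k$, $LE_{n,k}$, $F_{n,k}$ yielding seven cross-term groups) into two error pieces, which is a tidier but not substantively different argument.
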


\begin{proof}
We will show the result componentwise, i.e.\ that $\hat{\Sigma}_{w,ij} \to \Lambda_{ij}$ where $\Lambda_{ij}$ denotes $ij$ entry of the matrix $L\tilde{\Sigma}_{w}L^{T}$.  For ease of exposition, let $Y_{i}= g(X_{i}) - \theta$ and define $\bar{C}_{l}(k)=L\bar{B}_{l}(k)$ and $\bar{C}=L\bar{B}$. Then
\begin{align}
L\tilde{\Sigma}_{w}L^{T} &=\sum_{k=1}^{b}\dfrac{1}{a_{k}-1}\sum_{l=0}^{a_{k}-1}k^{2}\Delta_{2}w_{n}(k)L(\bar{B}_{l}(k)-\bar{B})(\bar{B}_{l}(k)-\bar{B})^{T}L^{T}\notag \\ 
&=\sum_{k=1}^{b}\dfrac{1}{a_{k}-1}\sum_{l=0}^{a_{k}-1}k^{2}\Delta_{2}w_{n}(k)(\bar{C}_{l}(k)-\bar{C})(\bar{C}_{l}(k)-\bar{C})^{T}. \label{eq:matrixLambda}
\end{align}
Define vectors
\begin{equation*}
A_{k} =k(\bar{Y}_{l}(k)-\bar{C}_{l}(k)), \; D_{k} =B(l+k)-B(l), \; E_{n,k} =k\bar{B}, \text{ and } F_{n,k} =k(\bar{Y}-\bar{C}).
\end{equation*}
Then it is easy to show 
\begin{align}
k(\bar{Y}_{l}^{(i)}(k)-\bar{Y}^{(i)})&=k(\bar{Y}_{l}^{(i)}(k)-\bar{Y}^{(i)}+\bar{C}_{l}^{(i)}(k)-\bar{C}_{l}^{(i)}(k)+\bar{C}^{(i)}-\bar{C}^{(i)}) \notag \\
&=k(\bar{Y}_{l}^{(i)}(k)-\bar{C}_{l}^{(i)}(k))+(k\bar{C}_{l}^{(i)}(k)-k\bar{C}^{(i)})-k(\bar{Y}^{(i)}-\bar{C}^{(i)}) \notag \\
&=A_{k}^{(i)}+(LD_{k})^{(i)}-(LE_{n,k})^{(i)}-F_{n,k}^{(i)}. \label{eq:barYdiff}
\end{align}
Using the definition of $\hat{\Sigma}_{w}$ at \eqref{eq:weighted} with \eqref{eq:matrixLambda} and \eqref{eq:barYdiff}, we have
\begin{align}
\hat{\Sigma}_{w,ij} & - \Lambda_{ij} \notag \\
= & \sum_{k=1}^{b}\dfrac{1}{a_{k}-1}\sum_{l-0}^{n-k}k^{2}\Delta_{2}w_{k} \left[ (\bar{Y}_{l}^{(i)}(k)-\bar{Y}^{(i)})(\bar{Y}_{l}^{(j)}(k)-\bar{Y}^{(j)}) \right. \notag \\
& \quad \quad \quad \quad \quad \quad \left. -(\bar{C}_{l}^{(i)}(k)-\bar{C}^{(i)})(\bar{C}_{l}^{(j)}(k)-\bar{C}^{(j)}) \right] \notag \\
= & \sum_{k=1}^{b}\dfrac{1}{a_{k}-1}\sum_{l-0}^{n-k}\Delta_{2}w_{k} \left[ k(\bar{Y}_{l}^{(i)}(k)-\bar{Y}^{(i)}) k(\bar{Y}_{l}^{(j)}(k)-\bar{Y}^{(j)}) \right. \notag \\
& \quad \quad \quad \quad \quad \quad \left. -k(\bar{C}_{l}^{(i)}(k)-\bar{C}^{(i)}) k(\bar{C}_{l}^{(j)}(k)-\bar{C}^{(j)}) \right] \notag \\
= & \sum_{k=1}^{b}\dfrac{1}{a_{k}-1}\sum_{l-0}^{n-k} \Delta_{2}w_{k} \left[ \left( A_{k}^{(i)}+(LD_{k})^{(i)}-(LE_{n,k})^{(i)}-F_{n,k}^{(i)} \right) \right. \notag \\
& \quad \quad \quad \quad \quad \quad \left. \left( A_{k}^{(j)}+(LD_{k})^{(j)}-(LE_{n,k})^{(j)}-F_{n,k}^{(j)} \right) \right. \notag \\
& \quad \quad \quad \quad \quad \quad \left. - \left( (LD_{k})^{(i)}-(LE_{n,k})^{(i)} \right) \left( (LD_{k})^{(j)}-(LE_{n,k})^{(j)} \right) \right] \notag \\
= &\sum_{k=1}^{b}\dfrac{1}{a_{k}-1}\sum_{l=0}^{n-k} \Delta_{2}w_{k}  \left[ A_{k}^{(i)}A_{k}^{(j)}+A_{k}^{(i)}(LD_{k})^{(j)}-A_{k}^{(i)}(LE_{n,k})^{(j)}-A_{k}^{(i)}F_{n,k}^{(j)} \right. \notag\\
& + (LD_{k})^{(i)}A_{k}^{(j)}-(LD_{k})^{(i)}F_{n,k}^{(j)}-(LE_{n,k})^{(i)}A_{k}^{(j)}+(LE_{n,k})^{(i)}F_{n,k}^{(j)}\notag\\
& \left. - F_{n,k}^{(i)}A_{k}^{(j)}-F_{n,k}^{(i)}(LD_{k})^{(j)}+F_{n,k}^{(i)}(LE_{n,k})^{(j)}+F_{n,k}^{(i)}F_{n,k}^{(j)} \right] \notag\\
= & \sum_{k=1}^{b}\dfrac{1}{a_{k}-1}\sum_{l=0}^{n-k} \Delta_{2}w_{k} \left[ A_{k}^{(i)}A_{k}^{(j)}+F_{n,k}^{(i)}F_{n,k}^{(j)}+ \left( A_{k}^{(i)}(LD_{k})^{(j)}+(LD_{k})^{(i)}A_{k}^{(j)} \right) \right. \notag\\
&- \left( A_{k}^{(i)}(LE_{n,k})^{(j)}+(LE_{n,k})^{(i)}A_{k}^{(j)} \right) - \left( A_{k}^{(i)}F_{n,k}^{(j)}+F_{n,k}^{(i)}A_{k}^{(j)} \right) \label{eq:messy}\\
&\left. - \left( (LD_{k})^{(i)}F_{n,k}^{(j)}+F_{n,k}^{(i)}(LD_{k})^{(j)} \right) + \left( (LE_{n,k})^{(i)}F_{n,k}^{(j)}+F_{n,k}^{(i)}(LE_{n,k})^{(j)} \right) \right] \notag.
\end{align}
Taking absolute value of \eqref{eq:messy} 
\begin{align}
& \left| \hat{\Sigma}_{w,ij} - \Lambda_{ij} \right| \le \notag\\ 
& \sum_{k=1}^{b}\dfrac{1}{a_{k}-1}\sum_{l=0}^{n-k} \left| \Delta_{2}w_{k} \right| \left[ \left| A_{k}^{(i)}A_{k}^{(j)} \right| + \left| F_{n,k}^{(i)}F_{n,k}^{(j)} \right| + \left| A_{k}^{(i)}(LD_{k})^{(j)} + (LD_{k})^{(i)}A_{k}^{(j)} \right| \right. \notag\\
& \quad \quad \quad + \left| A_{k}^{(i)}(LE_{n,k})^{(j)} + (LE_{n,k})^{(i)}A_{k}^{(j)} \right| + \left| A_{k}^{(i)}F_{n,k}^{(j)} + F_{n,k}^{(i)}A_{k}^{(j)} \right| \label{eq:main.inequality}\\
& \quad \quad \quad \left. + \left| (LD_{k})^{(i)}F_{n,k}^{(j)} + F_{n,k}^{(i)}(LD_{k})^{(j)} \right| + \left| (LE_{n,k})^{(i)}F_{n,k}^{(j)} + F_{n,k}^{(i)}(LE_{n,k})^{(j)} \right| \right] \notag.
\end{align}
We will show each of the seven terms in \eqref{eq:main.inequality} goes to 0 as $n \to \infty$ w.p.1.  First we establish the following useful inequality.  From \eqref{eq:sip} in Assumption~\ref{ass:sip}, for any component $i$ and sufficiently large $n$,
\begin{equation} \label{eq:d.psi}
\left| \sum_{t=1}^{n} Y_{t}^{(i)} - C^{(i)}(n) \right| \le D \psi(n) .
\end{equation}

\begin{enumerate}
\item For any component $i$, we have
\begin{align}
\left| A_{k}^{(i)} \right| & =k[\bar{Y}^{(i)}_{l}(k)-\bar{C}_{l}^{(i)}(k)] \notag \\
&=k\left[k^{-1}\sum_{t=1}^{k}Y^{(i)}_{lk+t}-k^{-1}(C^{(i)}(lk+k)-C^{(i)}(lk))\right] \notag \\
&=\left[\sum_{t=1}^{(lk+k)}Y_{t}^{(i)}-\sum_{t=1}^{lk}Y^{(i)}_{t}\right]-[C^{(i)}(lk+k)-C^{(i)}(lk)] \notag \\
&=\left[\sum_{t=1}^{lk+k}Y_{t}^{(i)}-C^{(i)}(lk+k)\right]-\left[\sum_{t=1}^{lk}Y_{t}^{(i)}-C^{(i)}(lk)\right] \notag \\
&\leq 2D\psi(n), \label{eq:A.bound}
\end{align}
where the inequality is from \eqref{eq:d.psi} since $lk < lk+k \le n$.  Then using \eqref{eq:lemma2.2} and \eqref{eq:A.bound} as $n \to \infty$ w.p.1
\begin{align*}
\sum_{k=1}^{b}\dfrac{1}{a_{k}-1}\sum_{l=0}^{a_{k}-1} \left| \Delta_{2}w_{n}(k) \right| \left| A_{k}^{(i)}A_{k}^{(j)} \right| \leq 4D^{2}\psi^{2}(n)\sum_{k=1}^{b}\dfrac{a_{k}}{a_{k}-1}|\Delta_{2}w_{n}(k)|\rightarrow 0.
\end{align*}

\item For any component $i$,
\begin{align}
\left| F_{n,k}^{(i)} \right| & = \left| k(\bar{Y}-\bar{C}) \right| \notag \\
 & = \frac{k}{n} \left| \sum_{t=1}^{n} Y_{t}^{(i)} - C^{(i)}(n) \right| \notag \\
 & \le \frac{k}{n} D \psi(n) , \label{eq:F.bound}
\end{align}
where the inequality is from \eqref{eq:d.psi}.  Using \eqref{eq:lemma2.2}, \eqref{eq:F.bound}, and Assumption~\ref{ass:batch}, as $n \to \infty$ w.p.1
\begin{align*}
\sum_{k=1}^{b}\dfrac{1}{a_{k}-1}\sum_{l=0}^{a_{k}-1} \left| \Delta_{2}w_{n}(k) \right| \left| F_{n,k}^{(i)}F_{n,k}^{(j)} \right| \leq \frac{b^2}{n^2} D^{2}\psi^{2}(n)\sum_{k=1}^{b}\dfrac{a_{k}}{a_{k}-1}|\Delta_{2}w_{n}(k)|\rightarrow 0.
\end{align*}

\item For any component $i$, using Proposition~\ref{prop:sup}
\begin{align}
\left| (LD_{k})^{(i)} \right| & = \left| (LB(l+k))^{(i)} -  (LB(l))^{(i)} \right| \notag \\
 & = \left| C^{(i)}(l+k) -  C^{(i)}(l) \right| \notag \\
 & \le \sup_{0\leq l\leq n-b}\sup_{0\leq s\leq b} \left| C^{(i)}(l+s)-C^{(i)}(l) \right| \notag \\
 & \le 2(1+\epsilon)(b\Sigma_{ii} \log n)^{1/2} \label{eq:LD.bound} .
\end{align}
Then \eqref{eq:A.bound} and \eqref{eq:LD.bound} imply
\begin{align*}
\sum_{k=1}^{b} & \dfrac{1}{a_{k}-1}\sum_{l=0}^{a_{k}-1} \left| \Delta_{2}w_{n}(k) \right| \left| A_{k}^{(i)}(LD_{k})^{(j)} + (LD_{k})^{(i)}A_{k}^{(j)} \right| \\
 & \le  2 \left[ 2(1+\epsilon)(b\Sigma_{ii} \log n)^{1/2}\right] \left[ 2 D\psi(n) \right] \sum_{k=1}^{b}\dfrac{a_{k}}{a_{k}-1}|\Delta_{2}w_{n}(k)|,
\end{align*}
which tends to 0 as $n \to \infty$ w.p.1 by \eqref{eq:lemma2.1}.

\item For any component $i$, using Proposition~\ref{prop:one}
\begin{align}
\left| (LE_{n,k})^{(i)} \right| & = \frac{k}{n} \left| C^{(i)}(n) \right| \notag \\
 & \le \frac{k}{n} (1+\epsilon)(2n\Sigma_{ii} \log \log n)^{1/2} \label{eq:LE.bound} .
\end{align}
The using \eqref{eq:A.bound} and \eqref{eq:LE.bound}
\begin{align*}
\sum_{k=1}^{b} & \dfrac{1}{a_{k}-1}\sum_{l=0}^{a_{k}-1} \left| \Delta_{2}w_{n}(k) \right| \left| A_{k}^{(i)}(LE_{n,k})^{(j)} + (LE_{n,k})^{(i)}A_{k}^{(j)} \right| \\
 & \le 2 \left[ \frac{b}{n} (1+\epsilon)(2n\Sigma_{ii} \log \log n)^{1/2} \right] \left[ 2 D\psi(n) \right] \sum_{k=1}^{b}\dfrac{a_{k}}{a_{k}-1}|\Delta_{2}w_{n}(k)| \\
 & \le 8 D \Sigma_{ii}^{1/2}  (1+\epsilon) n^{-1/2} b \psi(n) \log n \sum_{k=1}^{b}\dfrac{a_{k}}{a_{k}-1}|\Delta_{2}w_{n}(k)|,
\end{align*}
which tends to 0 as $n \to \infty$ w.p.1 by \eqref{eq:lemma2.1}.

\item By \eqref{eq:A.bound} and \eqref{eq:F.bound}
\begin{align*}
\sum_{k=1}^{b} & \dfrac{1}{a_{k}-1}\sum_{l=0}^{a_{k}-1} \left| \Delta_{2}w_{n}(k) \right| \left| A_{k}^{(i)}F_{n,k}^{(j)} + F_{n,k}^{(i)}A_{k}^{(j)} \right| \\
 & \le 2 \left[ 2D\psi(n) \right] \left[ \frac{b}{n} D \psi(n) \right] \sum_{k=1}^{b}\dfrac{a_{k}}{a_{k}-1}|\Delta_{2}w_{n}(k)| \\
 & = 4 D^2 \frac{b}{n} \psi(n)^2 \sum_{k=1}^{b}\dfrac{a_{k}}{a_{k}-1}|\Delta_{2}w_{n}(k)|,
\end{align*}
which tends to 0 as $n \to \infty$ w.p.1 by \eqref{eq:lemma2.1}.

\item By \eqref{eq:F.bound} and \eqref{eq:LD.bound}
\begin{align*}
\sum_{k=1}^{b} & \dfrac{1}{a_{k}-1}\sum_{l=0}^{a_{k}-1} \left| \Delta_{2}w_{n}(k) \right| \left| (LD_{k})^{(i)}F_{n,k}^{(j)} + F_{n,k}^{(i)}(LD_{k})^{(j)} \right| \\
 & \le 2 \left[ \frac{b}{n} D \psi(n) \right] \left[ 2(1+\epsilon)(b\Sigma_{ii} \log n)^{1/2}\right] \sum_{k=1}^{b}\dfrac{a_{k}}{a_{k}-1}|\Delta_{2}w_{n}(k)| \\
 & = 4 D \Sigma_{ii}^{1/2}  (1+\epsilon) \frac{b}{n} (b \log n)^{1/2} \psi(n) \sum_{k=1}^{b}\dfrac{a_{k}}{a_{k}-1}|\Delta_{2}w_{n}(k)|,
\end{align*}
which tends to 0 as $n \to \infty$ w.p.1 by \eqref{eq:lemma2.1}.

\item By \eqref{eq:F.bound} and \eqref{eq:LE.bound}
\begin{align*}
\sum_{k=1}^{b} & \dfrac{1}{a_{k}-1}\sum_{l=0}^{a_{k}-1} \left| \Delta_{2}w_{n}(k) \right| \left| (LE_{n,k})^{(i)}F_{n,k}^{(j)} + F_{n,k}^{(i)}(LE_{n,k})^{(j)} \right| \\
 & \le 2 \left[ \frac{b}{n} D \psi(n) \right] \left[ \frac{b}{n} (1+\epsilon)(2n\Sigma_{ii} \log \log n)^{1/2} \right] \sum_{k=1}^{b}\dfrac{a_{k}}{a_{k}-1}|\Delta_{2}w_{n}(k)| \\
& \le 4 D \Sigma_{ii}^{1/2}  (1+\epsilon) \frac{b^{3/2}}{n^{3/2}} (b \log n)^{1/2} \psi(n) \sum_{k=1}^{b}\dfrac{a_{k}}{a_{k}-1}|\Delta_{2}w_{n}(k)|,
\end{align*}
which tends to 0 as $n \to \infty$ w.p.1 by \eqref{eq:lemma2.1}.
\end{enumerate}
Since each of the seven terms in \eqref{eq:main.inequality} goes to 0 as $n \to \infty$ w.p.1, $\hat{\Sigma}_{w}\rightarrow L\tilde{\Sigma}_{w}L^{T}$ as $n \to \infty$ w.p.1.
\end{proof}

\section{Preliminaries for Theorem~\ref{thm:var}}

The proof of Theorem~\ref{thm:var} uses the results in Corollary~\ref{cor:lemmaf}, Lemma~\ref{lemma:w} and Lemma~\ref{lemma:Esq0}. 

Consider the general family of Bartlett flat top SV estimators introduced by \cite{poli:roma:1995,poli:roma:1996} defined as 
\[
\hat{\sigma}^{2}_{ft}=\dfrac{1}{1-c}\dfrac{b}{n}\sum_{l=0}^{n-b}(\dot{Y}_{l}(b)-\bar{Y})^{2}-\dfrac{c}{1-c}\dfrac{cb}{n}\sum_{l=0}^{n-cb}(\dot{Y}_{l}(cb)-\bar{Y})^{2},
\]
where $0\leq c\leq 1$. When $c=1/2$, the resulting estimator is the SV estimator based on lag window at \eqref{eq:ft} denoted previously $\hat{\sigma}^2_{f}$.  Further define the Brownian motion expression of $\hat{\sigma}^2_{ft}$ as
\[
\tilde{\sigma}^{2}_{ft}=\dfrac{1}{1-c}\dfrac{b}{n}\sum_{l=0}^{n-b}(\dot{B}_{l}(b)-\bar{B})^{2}-\dfrac{c}{1-c}\dfrac{cb}{n}\sum_{l=0}^{n-cb}(\dot{B}_{l}(cb)-\bar{B})^{2}
\]
whose variance is given by Lemma~\ref{lemma:f}. Denote $\lim_{n\rightarrow \infty} f(n)/g(n)=0$ by $f(n)=o(g(n))$.

\begin{lemma}\label{lemma:f}
Under Assumption \ref{ass:batch},
\[\dfrac{n}{b}Var[\tilde{\sigma}^{2}_{ft}]=\left(\dfrac{8}{3}c+\dfrac{4}{3}\right)+o(1).\]
\end{lemma}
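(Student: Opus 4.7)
\textbf{Proof proposal for Lemma~\ref{lemma:f}.}

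The plan is to decompose
\[
\tilde{\sigma}^{2}_{ft} = \frac{1}{1-c}\,T_{1} - \frac{c}{1-c}\,T_{c},
\qquad
T_{k} = \frac{kb}{n}\sum_{l=0}^{n-kb}\bigl(\dot{B}_{l}(kb)-\bar{B}\bigr)^{2},
\]
so that $T_{1}$ is the Brownian-motion overlapping BM statistic with batch size $b$ and $T_{c}$ the same with batch size $cb$. Expanding the variance by bilinearity,
\[
\Var[\tilde{\sigma}^{2}_{ft}] = \frac{1}{(1-c)^{2}}\Var[T_{1}] + \frac{c^{2}}{(1-c)^{2}}\Var[T_{c}] - \frac{2c}{(1-c)^{2}}\Cov[T_{1},T_{c}],
\]
reduces the lemma to three asymptotic quantities. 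For the two diagonal terms I would invoke the classical OBM variance calculation (see \citealp{meke:schm:1984} and \citealp{fleg:jone:2010}), which under Assumption~\ref{ass:batch} gives $(n/b)\Var[T_{1}]\to 4/3$ and $(n/(cb))\Var[T_{c}]\to 4/3$, hence $(n/b)\Var[T_{c}]\to 4c/3$.

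The new piece is the cross-covariance. Since the centred batch averages are jointly mean-zero Gaussian, the Isserlis identity gives $\Cov(X^{2},Y^{2}) = 2\Cov(X,Y)^{2}$, so
\[
\Cov[T_{1},T_{c}] = \frac{2cb^{2}}{n^{2}} \sum_{l=0}^{n-b}\sum_{l'=0}^{n-cb} \Cov\bigl(\dot{B}_{l}(b)-\bar{B},\,\dot{B}_{l'}(cb)-\bar{B}\bigr)^{2}.
\]
A direct calculation of Brownian covariances shows $\Cov(\dot{B}_{l}(b)-\bar{B},\dot{B}_{l'}(cb)-\bar{B}) = J(l,l')/(cb^{2}) - 1/n$, where $J(l,l')$ is the length of $[l,l+b]\cap[l',l'+cb]$. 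Squaring and summing produces three sums; the $1/n^{2}$ and cross terms are $o(b/n)$ after multiplying by $n/b$, while the dominant contribution is $(2/(cn^{2}b^{2}))\sum_{l,l'}J(l,l')^{2}$.

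The remaining work is to evaluate $\sum_{l,l'}J(l,l')^{2}$. Writing $d = l'-l$, the function $J(d)$ is piecewise linear: it grows from $0$ to $cb$ on $[-cb,0]$, is constant at $cb$ on $[0,(1-c)b]$, decreases to $0$ on $[(1-c)b,b]$, and is zero elsewhere. Ignoring $O(b)$ boundary values of $l$, summing $J(d)^{2}$ in $d$ yields
\[
2\int_{0}^{cb}u^{2}\,du + (cb)^{2}(1-c)b = \frac{2c^{3}b^{3}}{3} + c^{2}(1-c)b^{3} = \frac{c^{2}(3-c)b^{3}}{3},
\]
so $\sum_{l,l'}J(l,l')^{2} \sim n\,c^{2}(3-c)b^{3}/3$. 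Plugging back in,
\[
\frac{n}{b}\Cov[T_{1},T_{c}] \to \frac{2c(3-c)}{3}.
\]
Combining the three pieces,
\[
\frac{n}{b}\Var[\tilde{\sigma}^{2}_{ft}] \to \frac{1}{(1-c)^{2}}\!\left[\frac{4}{3} + \frac{4c^{3}}{3} - \frac{4c^{2}(3-c)}{3}\right],
\]
and an elementary polynomial simplification reduces the bracket to $(1-c)^{2}(8c/3 + 4/3)$, giving the claim.

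The main obstacle is controlling the boundary effects in the sum $\sum_{l,l'}J(l,l')^{2}$ and verifying rigorously that the $\sum J$ and constant terms are $o(b/n)$ after normalization; both amount to showing that the omitted $O(b)$-width boundary strips contribute $O(b^{4})$ to the sum while the bulk is of order $nb^{3}$, which is routine. The only other subtle point is reconciling the OBM variance formula (stated for the overlapping BM estimator proper) with the slightly different normalization of $T_{1}$ and $T_{c}$ used here, which amounts to a negligible $1-b/n$ factor.
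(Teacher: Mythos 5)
Your proof is correct, and it reaches the stated constant by a genuinely different and considerably shorter route than the paper. The paper computes $\text{Var}[\tilde{\sigma}^{2}_{ft}]=E[\tilde{\sigma}^{4}_{ft}]-(E[\tilde{\sigma}^{2}_{ft}])^{2}$ head-on: every product $E[Z_{1}^{2}Z_{2}^{2}]$ is evaluated through the conditional Gaussian distribution $Z_{1}\mid Z_{2}$, split case by case over the possible overlap configurations of the two batches, and then summed via explicit polynomial identities in $s$ over several pages. You instead (i) use bilinearity of the variance on the decomposition $\tilde{\sigma}^{2}_{ft}=(1-c)^{-1}T_{1}-c(1-c)^{-1}T_{c}$, (ii) import the classical result $(n/b)\text{Var}[T_{1}]\to 4/3$ for the Brownian overlapping-batch-means statistic, and (iii) collapse the cross term via Isserlis, $\text{Cov}(X^{2},Y^{2})=2\,\text{Cov}(X,Y)^{2}$ for centered jointly Gaussian variables, which reduces everything to the purely geometric sum $\sum_{l,l'}J(l,l')^{2}$ over the overlap-length function. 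I checked the pieces: the covariance formula $J/(cb^{2})-1/n$ is right (it reproduces the paper's $(n-b)/(bn)$ and $-1/n$ in the fully-overlapping and disjoint cases), $\sum_{d}J(d)^{2}\sim c^{2}(3-c)b^{3}/3$ is right, the limit $(n/b)\,\text{Cov}[T_{1},T_{c}]\to 2c(3-c)/3$ is right, and the final bracket $\tfrac{4}{3}(1+2c^{3}-3c^{2})=\tfrac{4}{3}(1-c)^{2}(2c+1)$ factors as you claim. What the paper's brute-force approach buys is self-containedness (it does not outsource the $4/3$ constant) and reuse of the same $E[Z_{1}^{2}Z_{2}^{2}]$ computations in the companion Lemma 3 for the nonoverlapping weighted BM statistic; what yours buys is brevity, far less arithmetic to get wrong, and a transparent explanation of why the $(1-c)^{2}$ denominator cancels (the polynomial $1+2c^{3}-3c^{2}$ has a double root at $c=1$). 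Two small points to tighten: your remark that the $\sum J$ and constant terms are ``$o(b/n)$ after multiplying by $n/b$'' should read that they contribute $O(b^{2}/n^{2})=o(b/n)$ to $\text{Cov}[T_{1},T_{c}]$ itself, hence $o(1)$ after normalization; and the normalization mismatch with the standard OBM estimator (prefactor $nb/[(n-b)(n-b+1)]$ versus $b/n$) perturbs the variance only at order $o(b/n)$, exactly as you anticipate, so it is harmless.
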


\begin{proof}
Note $Var[\tilde{\sigma}^{2}_{ft}]=E[\tilde{\sigma}^{4}_{ft}]-(E[\tilde{\sigma}^{2}_{ft}])^{2}$ and first consider
\begin{align}\label{eq:A1+A2+A3_f}
E&[\tilde{\sigma}^{4}_{ft}] =E\left[\Big(\dfrac{1}{1-c}\dfrac{b}{n}\sum_{l=0}^{n-b}(\dot{B}_{l}(b)-\bar{B})^{2}-\dfrac{c}{1-c}\dfrac{cb}{n}\sum_{l=0}^{n-cb}(\dot{B}_{l}(cb)-\bar{B})^{2}\Big)^{2}\right]\nonumber\\
&=E\left[\Big(\dfrac{1}{1-c}\Big)^{2}\dfrac{b^{2}}{n^{2}}\Big(\sum_{l=0}^{n-b}(\dot{B}_{l}(b)-\bar{B})^{2}\Big)^{2}+\Big(\dfrac{c}{1-c}\Big)^{2}\dfrac{(cb)^{2}}{n^{2}}\Big(\sum_{l=0}^{n-cb}(\dot{B}_{l}(cb)-\bar{B})^{2}\Big)^{2}\right.\nonumber\\
&\left.-\dfrac{2c^{2}}{(1-c)^{2}}\dfrac{b^{2}}{n^{2}}\Big(\sum_{l=0}^{n-b}(\dot{B}_{l}(b)-\bar{B})^{2}\Big)\Big(\sum_{l=0}^{n-cb}(\dot{B}_{l}(cb)-\bar{B})^{2}\Big)\right]\nonumber\\
&=A_{1}+A_{2}+A_{3}
\end{align}
where 
\begin{equation}\label{eq:A1_f}
A_{1}=E\left[\Big(\dfrac{1}{1-c}\Big)^{2}\dfrac{b^{2}}{n^{2}}\Big(\sum_{l=0}^{n-b}(\dot{B}_{l}(b)-\bar{B})^{2}\Big)^{2}\right],
\end{equation}

\begin{equation}\label{eq:A2_f}
A_{2}=E\left[\Big(\dfrac{c}{1-c}\Big)^{2}\dfrac{(cb)^{2}}{n^{2}}\Big(\sum_{l=0}^{n-cb}(\dot{B}_{l}(cb)-\bar{B})^{2}\Big)^{2}\right], \text{ and}
\end{equation}

\begin{equation}\label{eq:A3_f}
A_{3}=E\left[-\dfrac{2c^{2}}{(1-c)^{2}}\dfrac{b^{2}}{n^{2}}\Big(\sum_{l=0}^{n-b}(\dot{B}_{l}(b)-\bar{B})^{2}\Big)\Big(\sum_{l=0}^{n-cb}(\dot{B}_{l}(cb)-\bar{B})^{2}\Big)\right].
\end{equation}
Denote
\begin{equation}\label{eq:a1_f}
a_1=\sum_{l=0}^{n-b}(\dot{B}_{l}(b)-\bar{B})^{4},
\end{equation}

\begin{equation}\label{eq:a2_f}
a_2=\sum_{s=1}^{b-1}\sum_{l=0}^{n-b-s}(\dot{B}_{l}(b)-\bar{B})^{2}(\dot{B}_{l+s}(b)-\bar{B})^{2}, \text{ and}
\end{equation}

\begin{equation}\label{eq:a3_f}
a_3=\sum_{s=b}^{n-b}\sum_{l=0}^{n-b-s}(\dot{B}_{l}(b)-\bar{B})^{2}(\dot{B}_{l+s}(b)-\bar{B})^{2}.
\end{equation}
Then $A_{1}$ can be expressed as
\begin{align}\label{eq:A1_long_f}
A_{1}&=\dfrac{1}{(1-c)^{2}}\dfrac{b^{2}}{n^{2}}E\Bigg[\sum_{l=0}^{n-b}(\dot{B}_{l}(b)-\bar{B})^{4}\nonumber\\
&\ \ \ \ \ \ \ \ \ \ \ \ \ \ \ \ \  +2\sum_{s=1}^{b-1}\sum_{l=0}^{n-b-s}(\dot{B}_{l}(b)-\bar{B})^{2}(\dot{B}_{l+s}(b)-\bar{B})^{2}\nonumber\\
&\ \ \ \ \ \ \ \ \ \ \ \ \ \ \ \ \ \ +2\sum_{s=b}^{n-b}\sum_{l=0}^{n-b-s}(\dot{B}_{l}(b)-\bar{B})^{2}(\dot{B}_{l+s}(b)-\bar{B})^{2}\Bigg]\nonumber\\
&=\dfrac{1}{(1-c)^{2}}\dfrac{b^{2}}{n^{2}}E[a_1+2a_2+2a_3].
\end{align}
To calculate $E[a_1]$ at \eqref{eq:a1_f}, consider $E[(\dot{B}_{l}(b)-\bar{B})^{4}]$. Let $U_{t}=B(t)-B(t-1)$ where $t=1,2,..., n$ and note $U_{t}$ are i.i.d.\ $ N(0,1)$. Then for $l=0,...,(n-b)$,
\[\dot{B}_{l}(b)-\bar{B}=\dfrac{(n-b)}{nb}\sum_{t=l+1}^{l+b}U_{t}-\dfrac{1}{n}\sum_{t=1}^{l}U_{t}-\dfrac{1}{n}\sum_{t=l+b+1}^{n}U_{t}.\]
Notice 
\[\dot{B}_{l}(b)-\bar{B}\sim N(0,(n-b)/bn)\]
since
\[E\left[\dot{B}_{l}(b)-\bar{B}\right]=0\] 
and 
\[Var[\dot{B}_{l}(b)-\bar{B}]=\Big(\dfrac{n-b}{nb}\Big)^{2}b+\dfrac{n-b}{n^{2}}=\dfrac{n-b}{bn}.\]
Then 
\[bn/(n-b)(\dot{B}_{l}(b)-\bar{B})^{2}\sim \chi^{2}_{(1)}\]
with
\begin{equation}\label{eq:E(B)^2=1}
E\left[\dfrac{bn}{n-b}(\dot{B}_{l}(b)-\bar{B})^{2}\right]=1
\end{equation}
and
\[\ Var\left[\dfrac{bn}{n-b}(\dot{B}_{l}(b)-\bar{B})^{2}\right]=2.\]
Therefore
\begin{equation}\label{eq:E(B)^4}
E[(\dot{B}_{l}(b)-\bar{B})^{4}]=(E[(\dot{B}_{l}(b)-\bar{B})^{2}])^{2}+Var[(\dot{B}_{l}(b)-\bar{B})^{2}]=3\Big(\dfrac{n-b}{bn}\Big)^{2}.
\end{equation}
By \eqref{eq:a1_f},
\begin{equation}\label{eq:Ea1_f}
E[a_1]=\sum_{l=0}^{n-b}E[(\dot{B}_{l}(b)-\bar{B})^{4}]=3(n-b+1) \left(\dfrac{n-b}{bn}\right)^{2}.
\end{equation}
Define $Z_{1}=(\dot{B}_{l}(b)-\bar{B})$ and $Z_{2}=(\dot{B}_{l+s}(b)-\bar{B})$ for $l=0,...,(n-b-s)$ and $s=1,...,(b-1).$ Then \eqref{eq:a2_f} can be approached by
\[E[a_2]=\sum_{s=1}^{b-1}\sum_{l=0}^{n-b-s}E[Z_1^2Z_2^2].\]
We will obtain $E[Z_1^2Z_2^2]$ through the joint distribution of $Z=(Z_{1},Z_{2})^{T}$. Since $Z_{1}$ and $Z_{2}$ are linear combinations of i.i.d. standard normal variables, denote $U=(U_{1} , \dots, U_{n})^{T}$,  then $Z=VU$ where
\[V = \begin{bmatrix} -\dfrac{1}{n}&\cdots&-\dfrac{1}{n} &\dfrac{n-b}{bn}&\cdots&\dfrac{n-b}{bn}&-\dfrac{1}{n}&\cdots&\cdots&-\dfrac{1}{n}  \\
-\dfrac{1}{n}&\cdots&\cdots&-\dfrac{1}{n}&\dfrac{n-b}{bn}&\cdots&\dfrac{n-b}{bn}&-\dfrac{1}{n}&\cdots&-\dfrac{1}{n}\end{bmatrix}. \]
The joint distribution of $Z$ is 
\[\left[ \begin{matrix} Z_1 \\ Z_2 \end{matrix} \right] \sim N\left( \begin{bmatrix} 0\\ 0\end{bmatrix},\ \left[\begin{matrix}\dfrac{n-b}{bn}& \dfrac{nb-ns-b^{2}}{nb^{2}}\\ \dfrac{nb-ns-b^{2}}{nb^{2}}& \dfrac{n-b}{bn}\end{matrix}\right]\right).\]
Recall if 
\[\left[ \begin{matrix} Y_1 \\ Y_2 \end{matrix} \right] \sim N \left( \begin{bmatrix} \mu_{1}\\ \mu_{2}\end{bmatrix},\ \left[ \begin{matrix} \Sigma_{11}&\Sigma_{12} \\ \Sigma_{21}&\Sigma_{22} \end{matrix} \right] \right),\]
and $\Sigma_{22}$ is non-singular, then the conditional distribution of $Y_{1}|Y_{2}$ is
\[Y_{1}|Y_{2}\sim N(\mu_{1}+\Sigma_{12}\Sigma_{22}^{-1}(Y_{2}-\mu_{2}),\ \Sigma_{11}-\Sigma_{12}\Sigma_{22}^{-1}\Sigma_{21}).\]
In our case,
\[Z_{1}|Z_{2}\sim N\left(\dfrac{b(n-b)-ns}{b(n-b)}Z_{2},\ \dfrac{2bs(n-b)-ns^{2}}{b^{3}(n-b)}\right) \text{ and} \]
\[Z_{2}\sim N\left(0,\ \dfrac{n-b}{bn}\right).\]
Using iterated expectations
\begin{align*}
E[Z_{1}^{2}Z_{2}^{2}]&=E_{Z_{2}}[E_{Z_{1}|Z_{2}}[Z_{1}^{2}Z_{2}^{2}|Z_{2}]]\\
&=E_{Z_{2}}[Z_{2}^{2}E_{Z_{1}|Z_{2}}[Z_{1}^{2}|Z_{2}]]\\
&=E_{Z_{2}}\left[Z_{2}^{2}\left(\left(\dfrac{b(n-b)-ns}{b(n-b)}Z_{2}\right)^{2}+\dfrac{2bs(n-b)-ns^{2}}{b^{3}(n-b)}\right)\right]\\
&=\left(\dfrac{b(n-b)-ns}{b(n-b)}\right)^{2}E_{Z_{2}}[Z_{2}^{4}]+\dfrac{2bs(n-b)-ns^{2}}{b^{3}(n-b)}E_{Z_{2}}[Z_{2}^{2}]\\
&=\left(\dfrac{b(n-b)-ns}{b(n-b)}\right)^{2}3\left(\dfrac{n-b}{bn}\right)^{2}+\dfrac{2bs(n-b)-ns^{2}}{b^{3}(n-b)}\left(\dfrac{n-b}{bn}\right)\\
&=\dfrac{3(b(n-b)-ns)^{2}+2nbs(n-b)-n^{2}s^{2}}{b^{4}n^{2}}\\
&=\dfrac{1}{b^{4}n^{2}}(3b^{2}(n-b)^{2}+3n^{2}s^{2}-6nbs(n-b)+2nbs(n-b)-n^{2}s^{2})\\
&=\dfrac{2}{b^{4}}s^{2}+\left(\dfrac{4}{nb^{2}}-\dfrac{4}{b^{3}}\right)s+\left(\dfrac{3}{n^{2}}+\dfrac{3}{b^{2}}-\dfrac{6}{nb}\right).
\end{align*}
Notice that 
\[\sum_{s=1}^{n-b}s=\dfrac{b(b-1)}{2}=\dfrac{b^{2}}{2}-\dfrac{b}{2},\]
\[\sum_{s=1}^{n-b}s^{2}=\dfrac{(b-1)b(2b-1)}{6}=\dfrac{b^{3}}{3}-\dfrac{b^{2}}{2}+\dfrac{b}{6}, \text{ and} \]
\[\sum_{s=1}^{n-b}s^{3}=\dfrac{(b-1)^{2}b^{2}}{4}=\dfrac{b^{2}}{4}-\dfrac{b^{3}}{2}+\dfrac{b^{2}}{4}.\]
Plugging into \eqref{eq:a2_f} yields 
\begin{align}\label{eq:Ea2_f}
E[a_2]&=\sum_{s=1}^{b-1}\sum_{l=0}^{n-b-s}\bigg[\dfrac{2}{b^{4}}s^{2}+\left(\dfrac{4}{nb^{2}}-\dfrac{4}{b^{3}}\right)s+\left(\dfrac{3}{n^{2}}+\dfrac{3}{b^{2}}-\dfrac{6}{nb}\right)\bigg]\nonumber\\
&=\sum_{s=1}^{b-1}\left[\dfrac{2}{b^{4}}s^{2}+\left(\dfrac{4}{nb^{2}}-\dfrac{4}{b^{3}}\right)s+\left(\dfrac{3}{n^{2}}+\dfrac{3}{b^{2}}-\dfrac{6}{nb}\right)\right]\left[n-b+1-s\right]\nonumber\\
&=\sum_{s=1}^{b-1}\bigg[-\dfrac{2}{b^{4}}s^{3}+\left(\dfrac{2n}{b^{4}}+\dfrac{2}{b^{3}}+\dfrac{2}{b^{4}}-\dfrac{4}{nb^{2}}\right)s^{2}+\left(\dfrac{5}{b^{2}}-\dfrac{4n}{b^{3}}+\dfrac{2}{bn}+\dfrac{4}{nb^{2}}-\dfrac{4}{b^{3}}-\dfrac{3}{n^{2}}\right)s\nonumber\\
&\ \ \ \ \ \ \  \ \ \ \ \ \  +\left(\dfrac{9}{n}+\dfrac{3n}{b^{2}}-\dfrac{9}{b}-\dfrac{3b}{n^{2}}+\dfrac{3}{n^{2}}+\dfrac{3}{b^{2}}-\dfrac{6}{nb}\right)\bigg]\nonumber\\
&=-\dfrac{2}{b^{4}}\left(\dfrac{b^{4}}{4}-\dfrac{b^{3}}{2}+\dfrac{b^{2}}{4}\right)+\left(\dfrac{2n}{b^{4}}+\dfrac{2}{b^{3}}+\dfrac{2}{b^{4}}-\dfrac{4}{nb^{2}}\right)\left(\dfrac{b^{3}}{3}-\dfrac{b^{2}}{2}+\dfrac{b}{6}\right)\nonumber\\
&\ \ \ \ \ +\left(\dfrac{5}{b^{2}}-\dfrac{4n}{b^{3}}+\dfrac{2}{bn}+\dfrac{4}{nb^{2}}-\dfrac{4}{b^{3}}-\dfrac{3}{n^{2}}\right)\left(\dfrac{b^{2}}{2}-\dfrac{b}{2}\right)\nonumber\\
&\ \ \ \ \ +\left(\dfrac{9}{n}+\dfrac{3n}{b^{2}}-\dfrac{9}{b}-\dfrac{3b}{n^{2}}+\dfrac{3}{n^{2}}+\dfrac{3}{b^{2}}-\dfrac{6}{nb}\right)\left(b-1\right)\nonumber\\
&=\dfrac{2n}{b^{4}}\cdot\dfrac{b^{3}}{3}-\dfrac{4n}{b^{3}}\cdot\dfrac{b^{2}}{2}+\dfrac{3n}{b^{2}}\cdot b +o\left(\dfrac{n}{b}\right)\nonumber\\
&=\dfrac{5}{3}\dfrac{n}{b}+o\left(\dfrac{n}{b}\right).
\end{align}
Similarly as $a_2$, we will calculate $E[a_3]$ at \eqref{eq:a3_f} by first calculating $E[Z_{1}^{2}Z_{2}^{2}]$ where $Z_{1}=(\dot{B}_{l}(b)-\bar{B})$, $Z_{2}=(\dot{B}_{l+s}(b)-\bar{B})$ for $l=0,...,(n-b-s)$ and $s=b,...(n-b)$. The joint distribution of $Z_{1}$ and $Z_{2}$ is 
\[\left[ \begin{matrix} Z_1 \\ Z_2 \end{matrix} \right] \sim N\left( \begin{bmatrix} 0\\ 0\end{bmatrix},\ \left[\begin{matrix}\dfrac{n-b}{bn}& -\dfrac{1}{n}\\ -\dfrac{1}{n}& \dfrac{n-b}{bn}\end{matrix}\right]\right),\]
resulting in
\[
Z_{1}|Z_{2}\sim N\left(\dfrac{-b}{n-b}Z_{2},\ \left[\dfrac{n-b}{bn}-\dfrac{bn}{n^{2}(n-b)}\right]\right) \text{ and }
\]
\[
Z_{2}\sim N\left(0,\ \dfrac{n-b}{bn}\right).
\]
Then
\begin{align}\label{eq:z1z2}
E[Z_{1}^{2}Z_{2}^{2}]&=E_{Z_{2}}[E_{Z_{1}|Z_{2}}[Z_{1}^{2}Z_{2}^{2}|Z_{2}]] \nonumber\\
&=E_{Z_{2}}[Z_{2}^{2}E_{Z_{1}|Z_{2}}[Z_{1}^{2}|Z_{2}]]\nonumber\\
&=E_{Z_{2}}\left[Z_{2}^{2}\left[\left(\dfrac{-b}{n-b}Z_{2}\right)^{2}+\left(\dfrac{n-b}{bn}-\dfrac{bn}{n^{2}(n-b)}\right)\right]\right]\nonumber\\
&=\left(\dfrac{-b}{n-b}\right)^{2}E_{Z_{2}}[Z_{2}^{4}]+\left(\dfrac{n-b}{bn}-\dfrac{bn}{n^{2}(n-b)}\right)E_{Z_{2}}[Z_{2}^{2}]\nonumber\\
&=\left(\dfrac{-b}{n-b}Z_{2}\right)^{2}3\left(\dfrac{n-b}{bn}\right)^{2}+\left(\dfrac{n-b}{bn}-\dfrac{bn}{n^{2}(n-b)}\right)\left(\dfrac{n-b}{bn}\right)\nonumber\\
&=\dfrac{2}{n^{2}}+\left(\dfrac{n-b}{bn}\right)^{2}\nonumber\\
&=\dfrac{3}{n^{2}}+\dfrac{1}{b^{2}}-\dfrac{2}{bn}.
\end{align}
Plugging into \eqref{eq:a3_f}, we have
\begin{align}\label{eq:Ea3_f}
E&[a_3] =\sum_{s=b}^{n-b}\sum_{l=0}^{n-b-s}\left(\dfrac{3}{n^{2}}+\dfrac{1}{b^{2}}-\dfrac{2}{bn}\right)\nonumber\\
&=\sum_{s=b}^{n-b}\left(\dfrac{3}{n^{2}}+\dfrac{1}{b^{2}}-\dfrac{2}{bn}\right)(n-b+1-s)\nonumber\\
&=\sum_{s=b}^{n-b}-\left(\dfrac{3}{n^{2}}+\dfrac{1}{b^{2}}-\dfrac{2}{bn}\right)s \nonumber\\
&\ \ \ \ \ +\left(\dfrac{n}{b^{2}}-\dfrac{3}{b}+\dfrac{5}{n}-\dfrac{3b^{2}}{n^{2}}+\dfrac{3}{n^{2}}+\dfrac{1}{b^{2}}-\dfrac{2}{bn}\right)\nonumber\\
&=-\left(\dfrac{3}{n^{2}}+\dfrac{1}{b^{2}}-\dfrac{2}{bn}\right)\left(\dfrac{n^{2}}{2}-bn+\dfrac{n}{2}\right)\nonumber\\
&\ \ \ \ \ +\left(\dfrac{n}{b^{2}}-\dfrac{3}{b}+\dfrac{5}{n}-\dfrac{3b^{2}}{n^{2}}+\dfrac{3}{n^{2}}+\dfrac{1}{b^{2}}-\dfrac{2}{bn}\right)\left(n-2b+1\right)\nonumber\\
&=-\left(\dfrac{1}{b^{2}}\cdot \dfrac{n^{2}}{2}-\dfrac{1}{b^{2}}\cdot(bn)-\dfrac{2}{bn}\dfrac{n^{2}}{2}\right)+\left(\dfrac{n}{b^{2}}\cdot n-\dfrac{n}{b^{2}}\cdot(2b)-\dfrac{3}{b}\cdot n\right)+o\left(\dfrac{n}{b}\right)\nonumber\\
&=\dfrac{n^{2}}{2b^{2}}-\dfrac{3n}{b}+o\left(\dfrac{n}{b}\right).
\end{align}
Combine \eqref{eq:Ea1_f}, \eqref{eq:Ea2_f} and \eqref{eq:Ea3_f}, \eqref{eq:A1_long_f} can be calculated by
\begin{equation}\label{eq:A1_result_f}
A_{1}=\dfrac{1}{(1-c)^{2}}\dfrac{b^{2}}{n^{2}}\left(E[a_1+2a_2+2a_3]\right)=\dfrac{1}{(1-c)^{2}}\left(1-\dfrac{8}{3}\dfrac{b}{n}\right)+o\left(\dfrac{b}{n}\right).
\end{equation}
Similarly, we can obtain $A_2$ at \eqref{eq:A2_f} by
\begin{equation}\label{eq:A2_result_f}
A_{2}=\left(\dfrac{c}{1-c}\right)^{2}\left(1-\dfrac{8}{3}\dfrac{cb}{n}\right)+o\left(\dfrac{b}{n}\right).
\end{equation}
To calculate $A_{3}$ at \eqref{eq:A3_f}, let $OL=[(\dot{B}_{p}(b)-\bar{B})^{2}(\dot{B}_{q}(cb)-\bar{B})^{2}]$ for $p$ and $q$ satisfying $q\geq p$ and $q+cb\leq p+b$, denote
\begin{equation}\label{eq:a4_f}
a_4=\sum_{s=1}^{cb-1}\sum_{l=0}^{n-b-s}(\dot{B}_{l}(b)-\bar{B})^{2}(\dot{B}_{l+(1-c)b+s}(cb)-\bar{B})^{2},
\end{equation}
and
\begin{equation}\label{eq:a5_f}
a_5=\sum_{s=cb}^{n-b}\sum_{l=0}^{n-b-s}(\dot{B}_{l}(b)-\bar{B})^{2}(\dot{B}_{l+(1-c)b+s}(cb)-\bar{B})^{2}.
\end{equation}
Then $A_{3}$ has the following expression:
\begin{align}\label{eq:A3_long_f}
A_{3}&=-\dfrac{2c^{2}}{(1-c)^{2}}\dfrac{b^{2}}{n^{2}}E\left[\sum_{l=0}^{n-b}(\dot{B}_{l}(b)-\bar{B})^{2}\right]\left[\sum_{l=0}^{n-cb}(\dot{B}_{l}(cb)-\bar{B})^{2}\right]\nonumber\\
&=-\dfrac{2c^{2}}{(1-c)^{2}}\dfrac{b^{2}}{n^{2}}E[((1-c)b+1)(n-b+1)\cdot OL\nonumber\\
&\ \ \ \ \ +2\sum_{s=1}^{cb-1}\sum_{l=0}^{n-b-s}(\bar{B}_{l}(b)-\bar{B})^{2}(\bar{B}_{l+(1-c)b+s}(cb)-\bar{B})^{2}\nonumber\\
&\ \ \ \ \ +2\sum_{s=cb}^{n-b}\sum_{l=0}^{n-b-s}(\bar{B}_{l}(b)-\bar{B})^{2}(\bar{B}_{l+(1-c)b+s}(cb)-\bar{B})^{2}]\nonumber\\
&=-\dfrac{2c^{2}}{(1-c)^{2}}\dfrac{b^{2}}{n^{2}}E[((1-c)b+1)(n-b+1)\cdot OL+2a_4+2a_5]
\end{align}
First consider $E[OL]$. Denote $Z_{1}=(\dot{B}_{p}(b)-\bar{B})$ and $Z_{2}=(\dot{B}_{q}(cb)-\bar{B})$ for $p$ and $q$ satisfying $q\geq p$ and $q+cb\leq p+b$, then
\[\left[ \begin{matrix} Z_1 \\ Z_2 \end{matrix} \right] \sim N\left( \begin{bmatrix} 0\\ 0\end{bmatrix},\ \left[\begin{matrix}\dfrac{n-b}{bn}& \dfrac{n-b}{bn}\\ \dfrac{n-b}{bn}& \dfrac{n-cb}{cbn}\end{matrix}\right]\right),\]
resulting in
\[
Z_{1}|Z_{2}\sim N\left(\dfrac{c(n-b)}{n-cb}Z_{2},\ \dfrac{(1-c)n+(c-1)b}{b(n-cb)}\right) \text{ and}
\]
\[
Z_{2}\sim N\left(0,\ \dfrac{n-cb}{cbn}\right).
\]
Then
\begin{align}\label{eq:z1z2_0}
E[Z_{1}^{2}Z_{2}^{2}]&=E_{Z_{2}}[E_{Z_{1}|Z_{2}}[Z_{1}^{2}Z_{2}^{2}|Z_{2}]]\nonumber\\
&=E_{Z_{2}}[Z_{2}^{2}E_{Z_{1}|Z_{2}}[Z_{1}^{2}|Z_{2}]]\nonumber\\
&=E_{Z_{2}}\left[Z_{2}^{2}\left[\left(\dfrac{c(n-b)}{n-cb}Z_{2}\right)^{2}+\dfrac{(1-c)n+(c-1)b}{b(n-cb)}\right]\right]\nonumber\\
&=\left(\dfrac{c(n-b)}{n-cb}Z_{2}\right)^{2}E_{Z_{2}}[Z_{2}^{4}]+\dfrac{(1-c)n+(c-1)b}{b(n-cb)}E_{Z_{2}}[Z_{2}^{2}]\nonumber\\
&=\left(\dfrac{c(n-b)}{n-cb}Z_{2}\right)^{2}3\left(\dfrac{n-cb}{cbn}\right)^{2}+\dfrac{(1-c)n+(c-1)b}{b(n-cb)}\left(\dfrac{n-cb}{cbn}\right)\nonumber\\
&=\dfrac{2c+1}{c}\dfrac{1}{b^2}+\dfrac{3}{n^{2}}-\dfrac{5c+1}{c}\dfrac{1}{bn},
\end{align}
thus
\begin{align}\label{eq:EOL_f}
E[((1-c) & b+1)(n-b+1)\cdot OL]\nonumber\\
&=((1-c)b+1)(n-b+1)\left(\dfrac{2c+1}{c}\dfrac{1}{b^2}+\dfrac{3}{n^{2}}-\dfrac{5c+1}{c}\dfrac{1}{bn}\right)\nonumber\\
&=(1-c)bn\dfrac{2c+1}{c}\dfrac{1}{b^{2}}\nonumber\\
&=\dfrac{(2c+1)(1-c)}{c}\dfrac{n}{b}.
\end{align}
To calculate $E[a_4]$ at \eqref{eq:a4_f}, define $Z_{1}=(\dot{B}_{l}(b)-\bar{B})$ , $Z_{2}=(\dot{B}_{l+(1-c)b+s}(cb)-\bar{B})$. For $l=0,...,(n-b-s)$ and $s=1,...(cb-1)$
\[\left[ \begin{matrix} Z_1 \\ Z_2 \end{matrix} \right] \sim N\left( \begin{bmatrix} 0\\ 0\end{bmatrix},\ \left[\begin{matrix}\dfrac{n-b}{bn}& \dfrac{cbn-cb^{2}-sn}{cb^{2}n}\\ \dfrac{cbn-cb^{2}-sn}{cb^{2}n}& \dfrac{n-cb}{cbn}\end{matrix}\right]\right),\]
resulting in
\[
Z_{1}|Z_{2}\sim N\left(\dfrac{cbn-cb^{2}-sn}{b(n-cb)}Z_{2},\ \dfrac{(c-c^{2})b^{2}(n-b)-s^{2}n+2cb(n-b)s}{cb^{3}(n-cb)}\right) \text{ and}
\]
\[
Z_{2}\sim N\left(0,\ \dfrac{n-cb}{cbn}\right).
\]
Therefore 
\begin{align*}
&\ \ \ \ \ \ E[Z_{1}^{2}Z_{2}^{2}]\\
&=\left(\dfrac{cbn-cb^{2}-sn}{b(n-cb)}\right)^{2}E_{Z_{2}}[Z_{2}^{4}]+\dfrac{(c-c^{2})b^{2}(n-b)-s^{2}n+2cb(n-b)s}{cb^{3}(n-cb)}E_{Z_{2}}[Z_{2}^{2}]\\
&=\left(\dfrac{cbn-cb^{2}-sn}{b(n-cb)}\right)^{2}3\left(\dfrac{n-cb}{cbn}\right)^{2}+\dfrac{(c-c^{2})b^{2}(n-b)-s^{2}n+2cb(n-b)s}{cb^{3}(n-cb)}\left(\dfrac{n-cb}{cbn}\right)\\
&=\dfrac{2}{c^{2}b^{4}}s^{2}+\left(\dfrac{4}{cb^{2}n}-\dfrac{4}{cb^{3}}\right)s+\left(\dfrac{1+2c}{cb^{2}}+\dfrac{3}{n^2}-\dfrac{1+5c}{cbn}\right).
\end{align*}
Notice
$$\sum_{s=1}^{cb-1}s=\dfrac{c^{2}b^{2}}{2}-\dfrac{cb}{2},$$
$$\sum_{s=1}^{cb-1}s^{2}=\dfrac{1}{6}(cb-1)(cb)(2cb-1)=\dfrac{c^{3}b^{3}}{3}-\dfrac{c^{2}b^{2}}{2}+\dfrac{cb}{6},$$
$$\sum_{s=1}^{cb-1}s^{3}=\dfrac{(cb)^{2}(cb-1)^{2}}{4}=\dfrac{c^{4}b^{4}}{4}-\dfrac{c^{3}b^{3}}{2}+\dfrac{c^{2}b^{2}}{4}.$$
Then $E[a_4]$ at \eqref{eq:a4_f} can be approached by
\begin{align}\label{Ea4_f}
E[a_4]&=E\left[\sum_{s=1}^{cb-1}\sum_{l=0}^{n-b-s}(\dot{B}_{l}(b)-\bar{B})^{2}(\dot{B}_{l+(1-c)b+s}(cb)-\bar{B})^{2}\right]\nonumber\\
&=\sum_{s=1}^{cb-1}\sum_{l=0}^{n-b-s}\left[\dfrac{2}{c^{2}b^{4}}s^{2}+\left(\dfrac{4}{cb^{2n}}-\dfrac{4}{cb^{3}}\right)s+\left(\dfrac{1+2c}{cb^{2}}+\dfrac{3}{n}-\dfrac{1+5c}{cbn}\right)\right]\nonumber\\
&=\sum_{s=1}^{cb-1}\bigg[-\dfrac{2}{c^{2}b^{4}}s^{3}+\left(\dfrac{2(n-b+1)}{c^{2}b^{4}}-\dfrac{4}{cb^{2}n}+\dfrac{4}{cb^{3}}\right)s^{2}\nonumber\\
&\ \ \ \ \ \ \ \ \ \ \ \ \ \ \ \  +\left(\left(\dfrac{4}{cb^{2}n}-\dfrac{4}{cb^{3}}\right)(n-b+1)-\left(\dfrac{1+2c}{cb^{2}}+\dfrac{3}{n^{2}}-\dfrac{1+5c}{cbn}\right)\right)s\nonumber\\
&\ \ \ \ \ \ \ \ \ \ \ \ \ \ \ \ +\left(\dfrac{1+2c}{cb^{2}}+\dfrac{3}{n^{2}}-\dfrac{1+5c}{cbn}\right)(n-b+1)\bigg]\nonumber\\
&=-\dfrac{2}{c^{2}b^{4}}\left(\dfrac{c^{4}b^{4}}{4}-\dfrac{c^{3}b^{3}}{2}+\dfrac{c^{2}b^{2}}{4}\right)\nonumber\\
&\ \ \ \ \ \ \ \ \ +\left(\dfrac{2(n-b+1)}{c^{2}b^{4}}-\dfrac{4}{cb^{2}n}+\dfrac{4}{cb^{3}}\right)\left(\dfrac{c^{3}b^{3}}{3}-\dfrac{c^{2}b^{2}}{2}+\dfrac{cb}{6}\right)\nonumber\\
&\ \ \ \ \ \ \ \ \ +\left(\left(\dfrac{4}{cb^{2}n}-\dfrac{4}{cb^{3}}\right)(n-b+1)-\left(\dfrac{1+2c}{cb^{2}}+\dfrac{3}{n^{2}}-\dfrac{1+5c}{cbn}\right)\right)\left(\dfrac{c^{2}b^{2}}{2}-\dfrac{cb}{2}\right)\nonumber\\
&\ \ \ \ \ \ \ \ \ +\left(\left(\dfrac{1+2c}{cb^{2}}+\dfrac{3}{n^{2}}-\dfrac{1+5c}{cbn}\right)(n-b+1)(cb-1)\right)\nonumber\\
&=\left[\dfrac{2n}{c^{2}b^{4}}\cdot\dfrac{c^{3}b^{3}}{3}-\dfrac{4n}{cb^{3}}\cdot\dfrac{c^{2}b^{2}}{2}+\dfrac{(1+2c)n}{cb^{2}}(cb)\right]+o\left(\dfrac{n}{b}\right)\nonumber\\
&=\left(\dfrac{2c}{3}+1\right)\dfrac{n}{b}+o\left(\dfrac{n}{b}\right).
\end{align}
Next calculate $E[a_5]$ at \eqref{eq:a5_f}. First consider the joint distribution of $Z_{1}=(\dot{B}_{l}(b)-\bar{B})$ and $Z_{2}=(\dot{B}_{l+(1-c)b+s}(cb)-\bar{B})$ for $l=0,...,(n-b-s)$ and $s=cb,...(n-cb).$
$$\left[ \begin{matrix} Z_1 \\ Z_2 \end{matrix} \right] \sim N\left( \begin{bmatrix} 0\\ 0\end{bmatrix},\ \left[\begin{matrix}\dfrac{n-b}{bn}& -\dfrac{1}{n}\\ -\dfrac{1}{n}& \dfrac{n-cb}{cbn}\end{matrix}\right]\right),$$
resulting in
\[
Z_{1}|Z_{2}\sim N\left(\dfrac{cb}{cb-n}Z_{2},\ \bigg(\dfrac{n-b}{bn}-\dfrac{cb}{n(n-cb)}\bigg)\right) \text{ and}
\]
\[
Z_{2}\sim N\left(0,\ \dfrac{n-cb}{cbn}\right).
\]
Then
\begin{align}\label{eq:z1z2_1}
&\ \ \ \ \ \ E[Z_{1}^{2}Z_{2}^{2}]\nonumber\\
&=\left(\dfrac{cb}{cb-n}\right)^{2}E_{Z_{2}}[Z_{2}^{4}]+\left(\dfrac{n-b}{bn}-\dfrac{cb}{n(n-cb)}\right)E_{Z_{2}}[Z_{2}^{2}]\nonumber\\
&=\left(\dfrac{cb}{cb-n}\right)^{2}3\left(\dfrac{n-cb}{cbn}\right)^{2}+\bigg(\dfrac{n-b}{bn}-\dfrac{cb}{n(n-cb)}\bigg)\left(\dfrac{n-cb}{cbn}\right)\nonumber\\
&=\dfrac{3}{n^{2}}+\dfrac{1}{cb^{2}}-\dfrac{c+1}{c}\dfrac{1}{bn}.
\end{align}
Plug in \eqref{eq:a5_f},
\begin{align}\label{Ea5_f}
E[a_5]&=E\left[\sum_{s=cb}^{n-b}\sum_{l=0}^{n-b-s}(\dot{B}_{l}(b)-\bar{B})^{2}(\dot{B}_{l+(1-c)b+s}(cb)-\bar{B})^{2}\right]\nonumber\\
&=\sum_{s=cb}^{n-b}\sum_{l=0}^{n-b-s}\left(\dfrac{3}{n^{2}}+\dfrac{1}{cb^{2}}-\dfrac{c+1}{c}\dfrac{1}{bn}\right)\nonumber\\
&=\sum_{s=cb}^{n-b}\left(\dfrac{3}{n^{2}}+\dfrac{1}{cb^{2}}-\dfrac{c+1}{c}\dfrac{1}{bn}\right)(n-b+1-s)\nonumber\\
&=\sum_{s=cb}^{n-b}\bigg[\left(\dfrac{3}{n^{2}}+\dfrac{1}{cb^{2}}-\dfrac{c+1}{c}\dfrac{1}{bn}\right)(n-b+1)-\left(\dfrac{3}{n^{2}}+\dfrac{1}{cb^{2}}-\dfrac{c+1}{c}\dfrac{1}{bn}\right)s\bigg]\nonumber\\
&=\left(\dfrac{3}{n^{2}}+\dfrac{1}{cb^{2}}-\dfrac{c+1}{c}\dfrac{1}{bn}\right)(n-b+1)(n-(1+c)b+1)\nonumber\\
&\ \ \ \ \ -\left(\dfrac{3}{n^{2}}+\dfrac{1}{cb^{2}}-\dfrac{c+1}{c}\dfrac{1}{bn}\right)\left(\dfrac{n^{2}}{2}-bn\right)\nonumber\\
&=\left(\dfrac{3}{n}+\dfrac{n}{cb^{2}}-\dfrac{c+2}{c}\dfrac{1}{b}-\dfrac{3b}{n^{2}}+\dfrac{c+1}{c}\dfrac{1}{n}+\dfrac{3}{n^{2}}+\dfrac{1}{cb^{2}}-\dfrac{c+1}{c}\dfrac{1}{bn}\right)(n-(1+c)b+1)\nonumber\\
&\ \ \ \ \ -\left(\dfrac{3}{n^{2}}+\dfrac{1}{cb^{2}}-\dfrac{c+1}{c}\dfrac{1}{bn}\right)\left(\dfrac{n^{2}}{2}-bn\right)\nonumber\\
&=\left(-\dfrac{c+2}{c}\dfrac{1}{b}\cdot n+\dfrac{n}{cb^{2}}\cdot n-\dfrac{n}{cb^{2}}\cdot (1+c)b\right)\nonumber\\
&\ \ \ \ \ -\left(\dfrac{1}{cb^{2}}\cdot \dfrac{n^{2}}{2}-\dfrac{1}{cb^{2}}\cdot bn-\dfrac{c+1}{c}\dfrac{1}{bn}\cdot\dfrac{n^{2}}{2}\right)+o\left(\dfrac{n}{b}\right)\nonumber\\
&=\dfrac{1}{2c}\dfrac{n^{2}}{b^{2}}-\left(\dfrac{3}{2}+\dfrac{3}{2c}\right)\dfrac{n}{b}+o\left(\dfrac{n}{b}\right).
\end{align}
Combine \eqref{eq:EOL_f}, \eqref{Ea4_f} and \eqref{Ea5_f}, then $A_3$ at \eqref{eq:A3_long_f} can be calculated by
\begin{align}\label{A3_result_f}
A_{3}&=-\dfrac{2c^{2}}{(1-c)^{2}}\dfrac{b^{2}}{n^{2}} (E[((1-c)b+1)(n-b+1)\cdot OL]+2ED+2EG)\nonumber\\
&=-\dfrac{2c^{2}}{(1-c)^{2}}\dfrac{b^{2}}{n^{2}}\left[\dfrac{(2c+1)(1-c)}{c}\dfrac{n}{b}+\left(\dfrac{4c}{3}+2\right)\dfrac{n}{b}-\left(3+\dfrac{3}{c}\right)\dfrac{n}{b}+\dfrac{1}{c}\dfrac{n^{2}}{b^{2}}\right]+o\left(\dfrac{b}{n}\right)\nonumber\\
&=-\dfrac{2c^{2}}{(1-c)^{2}}\left(\dfrac{1}{c}-\dfrac{2c^{2}+6}{3c}\dfrac{b}{n}\right)+o\left(\dfrac{b}{n}\right).
\end{align}
From $A_{1}$, $A_{2}$, $A_{3}$ at \eqref{eq:A1_result_f}, \eqref{eq:A2_result_f}, \eqref{A3_result_f}, $E[\tilde{\sigma}^{4}_{ft}]$ at \eqref{eq:A1+A2+A3_f} becomes 
\begin{align}\label{eq:E4(A1+A2+A3)}
E[\tilde{\sigma}^{4}_{ft}]&=A_{1}+A_{2}+A_{3}\nonumber\\
&=\dfrac{1}{(1-c)^{2}}\left(1-\dfrac{8}{3}\dfrac{b}{n}\right)+\dfrac{c^{2}}{(1-c)^{2}}\left(1-\dfrac{8c}{3}\dfrac{b}{n}\right)-\dfrac{2c^{2}}{(1-c)^{2}}\left(\dfrac{1}{c}-\dfrac{2c^{2}+6}{3c}\dfrac{b}{n}\right)\nonumber\\
&=1+\dfrac{-4c^{3}+12c-8}{3(1-c)^{2}}\dfrac{b}{n}+o\left(\dfrac{b}{n}\right).
\end{align}
We also need $(E[\tilde{\sigma}^{2}_{ft}])^{2}$ to calculate $Var[\tilde{\sigma}^{2}_{ft}]$. By \eqref{eq:E(B)^2=1},
\[
E \left[ \dfrac{bn}{n-b}(\dot{B}_{l}(b)-\bar{B})^{2} \right]=1.
\]
Therefore
\[
E \left[ \sum_{l=0}^{n-b}(\dot{B}_{l}(b)-\bar{B})^{2} \right]=\dfrac{(n-b)(n-b+1)}{bn}
\]
and
\[
E \left[ \sum_{l=0}^{n-cb}(\dot{B}_{l}(cb)-\bar{B})^{2} \right]=\dfrac{(n-cb)(n-cb+1)}{cbn}.
\]
Then
\begin{align}\label{eq:E2(A1+A2+A3)}
(E[\tilde{\sigma}^{2}_{ft}])^{2}&=\left(E\left[\dfrac{1}{1-c}\dfrac{b}{n}\sum_{l=0}^{n-b}(\dot{B}_{l}(b)-\bar{B})^{2}-\dfrac{c}{1-c}\dfrac{cb}{n}\sum_{l=0}^{n-cb}(\dot{B}_{l}(cb)-\bar{B})^{2}\right]\right)^{2}\nonumber\\
&=\left[\dfrac{1}{1-c}\dfrac{b}{n}\dfrac{(n-b)(n-b+1)}{bn}-\dfrac{c}{1-c}\dfrac{cb}{n}\dfrac{(n-cb)(n-cb+1)}{cbn}\right]^{2}\nonumber\\
&=\left[\dfrac{(n-b)(n-b+1)}{(1-c)n^{2}}-\dfrac{c(n-cb)(n-cb+1)}{(1-c)n^{2}}\right]^{2}\nonumber\\
&=\dfrac{1}{(1-c)^{2}n^{4}}[(n^{2}+b^{2}-2bn)(n^{2}+b^{2}-2bn+1+2n-2b)]\nonumber\\
&\ \ \ \ \ +\dfrac{c^{2}}{(1-c)^{2}n^{4}}[(n^{2}+c^{2}b^{2}-2cbn)(n^{2}+c^{2}b^{2}-2cbn+1+2n-2cb)]\nonumber\\
&\ \ \ \ \ -\dfrac{2c}{(1-c)^{2}n^{4}}[(n^{2}+b^{2}-2bn+n-b)(n^{2}+c^{2}b^{2}-2cbn+n-cb)]\nonumber\\
&=\dfrac{1}{(1-c)^{2}n^{4}}[n^{4}-4bn^{3}]+\dfrac{c^{2}}{(1-c)^{2}n^{4}}[n^{4}-4cbn^{3}]\nonumber\\
&\ \ \ \ \  -\dfrac{2c}{(1-c)^{2}n^{4}}[n^{4}-2cbn^{3}-2bn^{3}]+o\left(\dfrac{b}{n}\right)\nonumber\\
&=\dfrac{1}{(1-c)^{2}n^{4}}[(1-c)^{2}n^{4}+(4c^{2}+4c-4c^{3}-4)bn^{3}]+o\left(\dfrac{b}{n}\right)\nonumber\\
&=1+\dfrac{4c^{2}+4c-4c^{3}-4}{(1-c)^{2}}\dfrac{b}{n}+o\left(\dfrac{b}{n}\right).
\end{align}
Combine \eqref{eq:E4(A1+A2+A3)} and \eqref{eq:E2(A1+A2+A3)} to get
\begin{align*}
Var[\tilde{\sigma}^{2}_{ft}]&=E[\tilde{\sigma}^{4}_{ft}]-(E[\tilde{\sigma}^{2}_{ft}])^{2}\\
&=\left(1+\dfrac{-4c^{3}+12c-8}{3(1-c)^{2}}\dfrac{b}{n}\right)-\left(1+\dfrac{4c^{2}+4c-4c^{3}-4}{(1-c)^{2}}\dfrac{b}{n}\right)+o\left(\dfrac{b}{n}\right)\\
&=\dfrac{8c^{3}-12c^{2}+4}{3(1-c)^{2}}\dfrac{b}{n}+o\left(\dfrac{b}{n}\right)\\
&=\dfrac{8c[(1-c)^{2}+\dfrac{1}{2c}(c-1)^{2}]}{3(1-c)^{2}}+o\left(\dfrac{b}{n}\right)\\
&=\left(\dfrac{8}{3}c+\dfrac{4}{3}\right)\dfrac{b}{n}+o\left(\dfrac{b}{n}\right).
\end{align*}
\end{proof}


Let $\tilde{\sigma}^2_f$ be the Brownian motion expression of $\hat{\sigma}^2_f$. Corollary~\ref{cor:lemmaf} below follows Lemma~\ref{lemma:f} by letting $c=1/2$. 
\begin{corollary}\label{cor:lemmaf}
Under Assumption \ref{ass:batch},
\[\dfrac{n}{b}Var[\tilde{\sigma}^{2}_{f}]=\dfrac{8}{3}+o(1).\]
\end{corollary}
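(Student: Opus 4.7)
The plan is very direct: this corollary is just the $c=1/2$ specialization of Lemma~\ref{lemma:f}, so essentially no new work is required. I would begin by confirming that $\hat{\sigma}^{2}_{f}$ (the SV estimator using the Bartlett flat top lag window at \eqref{eq:ft}) is exactly $\hat{\sigma}^{2}_{ft}$ evaluated at $c=1/2$. This is already noted in the definition of $\hat{\sigma}^{2}_{ft}$ in the preliminaries: when $c=1/2$, the identity \eqref{eq:ft=bt} gives $\hat{\sigma}^{2}_{ft}=2\hat{\Sigma}^{(1)}-\hat{\Sigma}^{(2)}$, which matches the flat top SV estimator. Consequently, their Brownian motion counterparts $\tilde{\sigma}^{2}_{f}$ and $\tilde{\sigma}^{2}_{ft}\big|_{c=1/2}$ coincide as well.

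Once this identification is established, I would simply substitute $c=1/2$ into the expression from Lemma~\ref{lemma:f}:
\[
\dfrac{n}{b}\Var[\tilde{\sigma}^{2}_{f}]=\dfrac{n}{b}\Var[\tilde{\sigma}^{2}_{ft}]\bigg|_{c=1/2}=\left(\dfrac{8}{3}\cdot\dfrac{1}{2}+\dfrac{4}{3}\right)+o(1)=\dfrac{4}{3}+\dfrac{4}{3}+o(1)=\dfrac{8}{3}+o(1).
\]
The $o(1)$ remainder transfers directly since it was uniform in the algebraic manipulations of Lemma~\ref{lemma:f} and does not depend on the particular value of $c\in(0,1)$.

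There is no real obstacle here; the only point warranting a sentence of care is making sure the equivalence $\tilde{\sigma}^{2}_{f}=\tilde{\sigma}^{2}_{ft}\big|_{c=1/2}$ is stated cleanly, so that Assumption~\ref{ass:batch} (which is the sole hypothesis of Lemma~\ref{lemma:f}) suffices for the corollary as well. Since this substitution preserves all the asymptotic conditions and no additional technical assumptions are introduced, the corollary is an immediate consequence.
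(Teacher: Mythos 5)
Your proposal is correct and is exactly the paper's own argument: the paper states that Corollary~\ref{cor:lemmaf} follows from Lemma~\ref{lemma:f} by setting $c=1/2$, and your substitution $\tfrac{8}{3}\cdot\tfrac{1}{2}+\tfrac{4}{3}=\tfrac{8}{3}$ together with the identification of $\tilde{\sigma}^{2}_{f}$ with $\tilde{\sigma}^{2}_{ft}$ at $c=1/2$ is all that is needed.
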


Recall $\hat{\sigma}^2_w$ denotes the univariate weighted BM estimator with the Bartlett flat top lag window at \eqref{eq:ft}.   Suppose $n=ab$, then consider the corresponding Brownian motion expression
\[
\tilde{\sigma}^{2}_{w}=\dfrac{2b}{a-1}\sum_{l=0}^{a-1}(\bar{B}_{l}(b)-\bar{B})^{2}-\dfrac{b/2}{2a-1}\sum_{l=0}^{2a-1}(\bar{B}_{l}(b/2)-\bar{B})^{2}.
\]
\begin{lemma}\label{lemma:w}
Under Assumption \ref{ass:batch}
\[\dfrac{n}{b}Var[\tilde{\sigma}^{2}_{w}]=5+o(1).\]
\end{lemma}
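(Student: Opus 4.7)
The plan is to exploit the fact that non-overlapping batch means use disjoint Brownian increments and are therefore independent, so the variance calculation will be substantially cleaner than the overlapping case handled in Lemma~\ref{lemma:f}. Using the Brownian-motion analogue of \eqref{eq:WBMft=BM}, I would write $\tilde{\sigma}^{2}_{w}=2T_{1}-T_{2}$, where
\[
T_{1}=\frac{b}{a-1}\sum_{l=0}^{a-1}(\bar{B}_{l}(b)-\bar{B})^{2}\quad\text{and}\quad T_{2}=\frac{b/2}{2a-1}\sum_{j=0}^{2a-1}(\bar{B}_{j}(b/2)-\bar{B})^{2}
\]
are the Brownian counterparts of ordinary BM estimators at batch sizes $b$ and $b/2$. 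Expanding $\mathrm{Var}(\tilde{\sigma}^{2}_{w})=4\mathrm{Var}(T_{1})+\mathrm{Var}(T_{2})-4\mathrm{Cov}(T_{1},T_{2})$ reduces the problem to three scalar quantities and avoids the nested fourth-moment sums of Lemma~\ref{lemma:f}.

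The marginals are straightforward. Because $\bar{B}_{l}(b)$ uses disjoint increments of $B$ and $\bar{B}=n^{-1}B(n)$ equals the sample mean of the $a$ batch means, setting $Z_{l}=\sqrt{b}\,\bar{B}_{l}(b)$ gives i.i.d.\ $N(0,1)$ variables with $(a-1)T_{1}=\sum_{l}(Z_{l}-\bar{Z})^{2}\sim\chi^{2}_{a-1}$, so $\mathrm{Var}(T_{1})=2/(a-1)$. The same argument at batch size $b/2$ gives $\mathrm{Var}(T_{2})=2/(2a-1)$.

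The main step, and where I expect the real work to lie, is computing $\mathrm{Cov}(T_{1},T_{2})$. Let $U_{j}=\sqrt{b/2}\,\bar{B}_{j}(b/2)$, which are i.i.d.\ $N(0,1)$. Because $\bar{B}_{l}(b)$ is the average of two consecutive $\bar{B}_{j}(b/2)$'s, we have $Z_{l}=(U_{2l}+U_{2l+1})/\sqrt{2}$. I would introduce the complementary variables $Q_{l}=(U_{2l}-U_{2l+1})/\sqrt{2}$; the orthogonality of the transformation $(U_{2l},U_{2l+1})\mapsto(Z_{l},Q_{l})$ ensures that $\{Z_{l},Q_{l}:l=0,\ldots,a-1\}$ are $2a$ mutually independent standard normals, so $R:=\sum_{l}Q_{l}^{2}\sim\chi^{2}_{a}$ is independent of $(Z_{0},\ldots,Z_{a-1})$, and hence of $T_{1}$. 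An ANOVA-style identity, using $\sum_{j}U_{j}^{2}=\sum_{l}(Z_{l}^{2}+Q_{l}^{2})$ together with $\bar{U}=\bar{Z}/\sqrt{2}$ (which gives $2a\bar{U}^{2}=a\bar{Z}^{2}$), then yields the clean relation
\[
(2a-1)T_{2}=(a-1)T_{1}+R,
\]
from which $\mathrm{Cov}(T_{1},T_{2})=(a-1)\mathrm{Var}(T_{1})/(2a-1)=2/(2a-1)$ is immediate.

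Putting the three pieces together gives
\[
\mathrm{Var}(\tilde{\sigma}^{2}_{w})=\frac{8}{a-1}+\frac{2}{2a-1}-\frac{8}{2a-1}=\frac{10a-2}{(a-1)(2a-1)},
\]
and multiplying by $n/b=a$ and letting $a\to\infty$ under Assumption~\ref{ass:batch} produces the advertised limit of $5$. The principal obstacle is recognizing the orthogonal decomposition $(U_{2l},U_{2l+1})\mapsto(Z_{l},Q_{l})$ that produces the identity $(2a-1)T_{2}=(a-1)T_{1}+R$; once this is in hand the remainder is routine chi-squared algebra.
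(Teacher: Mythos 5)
Your proof is correct, and it takes a genuinely different and substantially cleaner route than the paper's. The paper computes $E[\tilde{\sigma}^{4}_{w}]$ by brute force: it expands the square into three terms $A_{1},A_{2},A_{3}$, reduces each to sums of mixed fourth moments $E[Z_{1}^{2}Z_{2}^{2}]$ of correlated bivariate normals evaluated via conditional distributions, and tracks leading-order terms through several pages of algebra before subtracting $(E[\tilde{\sigma}^{2}_{w}])^{2}$. You instead exploit exact distribution theory: since the non-overlapping batch increments tile $[0,n]$, each of $(a-1)T_{1}$ and $(2a-1)T_{2}$ is exactly $\chi^{2}$-distributed, and the orthogonal change of variables $(U_{2l},U_{2l+1})\mapsto(Z_{l},Q_{l})$ yields the exact ANOVA decomposition $(2a-1)T_{2}=(a-1)T_{1}+R$ with $R\sim\chi^{2}_{a}$ independent of $T_{1}$. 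I verified the pieces: $\mathrm{Var}(T_{1})=2/(a-1)$, $\mathrm{Var}(T_{2})=2/(2a-1)$, $\mathrm{Cov}(T_{1},T_{2})=2/(2a-1)$, hence $\mathrm{Var}(\tilde{\sigma}^{2}_{w})=8/(a-1)-6/(2a-1)=(10a-2)/[(a-1)(2a-1)]$, and multiplying by $n/b=a$ gives $5+O(1/a)$. What your approach buys is an exact finite-sample variance formula (no $o(\cdot)$ bookkeeping until the very last step), a much shorter argument, and a template that generalizes immediately to other integer ratios of batch sizes; what it relies on is the same structural assumption the paper makes implicitly, namely that $n=ab$ exactly and $b$ is even so each coarse batch splits into exactly two fine batches. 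No gap.
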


\begin{proof}
Since $Var[\tilde{\sigma}^{2}_{w}]=E[\tilde{\sigma}^{4}_{w}]-(E[\tilde{\sigma}^{2}_{w}])^{2}$, first consider $E[\tilde{\sigma}^{4}_{w}]$.
\begin{align}\label{eq:A1+A2+A3_w}
E[\tilde{\sigma}^{4}_{w}]& =E\left[\Big(\dfrac{2b}{a-1}\sum_{l=0}^{a-1}(\bar{B}_{l}(b)-\bar{B})^{2}-\dfrac{b/2}{2a-1}\sum_{l=0}^{2a-1}(\bar{B}_{l}(b/2)-\bar{B})^{2}\Big)^{2}\right]\nonumber\\
&=E\left[\Big(\dfrac{2b}{a-1}\Big)^{2}\Big(\sum_{l=0}^{a-1}(\bar{B}_{l}(b)-\bar{B})^{2}\Big)^{2}+\Big(\dfrac{b}{4a-2}\Big)^{2}\Big(\sum_{l=0}^{2a-1}(\bar{B}_{l}(b/2)-\bar{B})^{2}\Big)^{2}\right.\nonumber\\
&\left.-\dfrac{4b^2}{(a-1)(4a-2)}\Big(\sum_{l=0}^{a-1}(\bar{B}_{l}(b)-\bar{B})^{2}\Big)\Big(\sum_{l=0}^{2a-1}(\bar{B}_{l}(b/2)-\bar{B})^{2}\Big)\right]\nonumber\\
&=A_{1}+A_{2}+A_{3}
\end{align}
where 
\begin{equation}\label{A1_w}
A_{1}=E\left[\Big(\dfrac{2b}{a-1}\Big)^{2}\Big(\sum_{l=0}^{a-1}(\bar{B}_{l}(b)-\bar{B})^{2}\Big)^{2}\right],
\end{equation}
\begin{equation}\label{A2_w}
A_{2}=E\left[\Big(\dfrac{b}{4a-2}\Big)^{2}\Big(\sum_{l=0}^{2a-1}(\bar{B}_{l}(b/2)-\bar{B})^{2}\Big)^{2}\right], \text{ and}
\end{equation}
\begin{equation}\label{A3_w}
A_{3}=E\left[-\dfrac{4b^2}{(a-1)(4a-2)}\Big(\sum_{l=0}^{a-1}(\bar{B}_{l}(b)-\bar{B})^{2}\Big)\Big(\sum_{l=0}^{2a-1}(\bar{B}_{l}(b/2)-\bar{B})^{2}\Big)\right].
\end{equation}
Denote
\[a_1=\sum_{l=0}^{a-1}(\bar{B}_{l}(b)-\bar{B})^{4}\]
and 
\[a_2=\sum_{s=1}^{a-1}\sum_{l=0}^{a-1-s}(\bar{B}_{l}(b)-\bar{B})^{2}(\bar{B}_{l+s}(b)-\bar{B})^{2}.\]
Then \eqref{A1_w} can be expressed as
\begin{align}\label{eq:A1_long_w}
A_{1}&=\Big(\dfrac{2b}{a-1}\Big)^2E\Bigg[\sum_{l=0}^{a-1}(\bar{B}_{l}(b)-\bar{B})^{4}+2\sum_{s=1}^{a-1}\sum_{l=0}^{a-1-s}(\bar{B}_{l}(b)-\bar{B})^{2}(\bar{B}_{l+s}(b)-\bar{B})^{2}\Bigg]\nonumber\\
&=\Big(\dfrac{2b}{a-1}\Big)^2E[a_1+2a_2].
\end{align}
First consider $E[a_1]$. By \eqref{eq:E(B)^4},
\[E[(\bar{B}_l(b)-\bar{B})^4]=3\Big(\dfrac{n-b}{bn}\Big)^2,\]
hence 
\begin{align}\label{eq:Ea1_w}
E[a_1]&=E\left[\sum_{l=0}^{a-1}(\bar{B}_{l}(b)-\bar{B})^{4}\right]\nonumber\\
&=3a\left(\dfrac{n-b}{bn}\right)^2\nonumber\\
&=3a\left(\dfrac{1}{b^2}+\dfrac{1}{n^2}-\dfrac{2}{bn}\right)\nonumber\\
&=3a\left(\dfrac{1}{b^2}+\dfrac{1}{a^2 b^2}-\dfrac{2}{ab^2}\right)\nonumber\\
&=\dfrac{3a}{b^2}+o\left(\dfrac{a}{b^2}\right).
\end{align}
From \eqref{eq:z1z2}, 
\[E[(\bar{B}_l(b)-\bar{B})^2(\bar{B}_{l+s}(b)-\bar{B})^2]=\dfrac{3}{n^2}+\dfrac{1}{b^2}-\dfrac{2}{bn}\]
and therefore
\begin{align}\label{eq:Ea2_w}
E[a_2]&=E\left[\sum_{s=1}^{a-1}\sum_{l=0}^{a-1-s}(\bar{B}_{l}(b)-\bar{B})^{2}(\bar{B}_{l+s}(b)-\bar{B})^{2}\right]\nonumber\\
&=\left(\dfrac{3}{n^2}+\dfrac{1}{b^2}-\dfrac{2}{bn}\right)\sum_{s=1}^{a-1}\sum_{l=0}^{a-1-s}1\nonumber\\
&=\left(\dfrac{3}{n^2}+\dfrac{1}{b^2}-\dfrac{2}{bn}\right)\dfrac{a(a-1)}{2}\nonumber\\
&=\dfrac{1}{2}\left(\dfrac{a^2}{b^2}-\dfrac{3a}{b^2}\right)+o\left(\dfrac{a}{b^2}\right).
\end{align}
Plug $E[a_1]$, $E[a_2]$ at \eqref{eq:Ea1_w} and \eqref{eq:Ea2_w} in \eqref{eq:A1_long_w}, we have
\begin{align}\label{eq:A1_result_w}
A_1&=\left(\dfrac{2b}{a-1}\right)^2(EA+2EB)\nonumber\\
&=\dfrac{4b^2}{(a-1)^2}\left[\dfrac{3a}{b^2}+\dfrac{a^2}{b^2}-\dfrac{3a}{b^2}+o\left(\dfrac{a}{b^2}\right)\right]\nonumber\\
&=\dfrac{4a^2}{(a-1)^2}+o\left(\dfrac{1}{a}\right).
\end{align}
Next consider $A_2$ at \eqref{A2_w}. Denote
\[a_3=\sum_{l=0}^{2a-1}(\bar{B}_{l}(b/2)-\bar{B})^{4}\]
and
\[a_4=\sum_{s=1}^{2a-1}\sum_{l=0}^{2a-1-s}(\bar{B}_{l}(b/2)-\bar{B})^{2}(\bar{B}_{l+s}(b/2)-\bar{B})^{2},\]
then
\begin{align}\label{eq:A2_long_w}
A_2&=\dfrac{b^2}{(4a-2)^2}E\left[\sum_{l=0}^{2a-1}\left(\bar{B}_{l}(b/2)-\bar{B}\right)^4 \right. \notag \\
& \quad \quad \quad \left. + 2 \sum_{s=1}^{2a-1}\sum_{l=0}^{2a-1-s}\left(\bar{B}_l(b/2)-\bar{B}\right)^2\left(\bar{B}_{l+s}(b/2)-\bar{B}\right)^2\right]\nonumber\\
&=\dfrac{b^2}{(4a-2)^2}E[a_3+2a_4.]
\end{align}
By \eqref{eq:E(B)^4}
\[E[(\bar{B}_l(b/2)-\bar{B})^4]=3\Big(\dfrac{2n-b}{bn}\Big)^2,\]
hence 
\begin{align}\label{eq:Ea3_w}
E[a_3]&=E\left[\sum_{l=0}^{a-1}(\bar{B}_{l}(b/2)-\bar{B})^{4}\right]\nonumber\\
&=6a\left(\dfrac{2n-b}{bn}\right)^2\nonumber\\
&=6a\left(\dfrac{4}{b^2}+\dfrac{1}{n^2}-\dfrac{4}{bn}\right)\nonumber\\
&=6a\left(\dfrac{4}{b^2}+\dfrac{1}{a^2 b^2}-\dfrac{4}{ab^2}\right)\nonumber\\
&=\dfrac{24a}{b^2}+o\left(\dfrac{a}{b^2}\right).
\end{align}
To calculate $E[a_4]$, define $Z_{1}=(\bar{B}_{l}(b/2)-\bar{B})$ and $Z_{2}=(\bar{B}_{l+s}(b/2)-\bar{B})$ for $l=0,...,(2a-1-s)$ and $s=1,...,(2a-1).$ Consider the joint distribution of $Z=(Z_{1},Z_{2})^{T}.$ Denote $U=(U_{1},...U_{n})^{T}$, $Z_{1}$ and $Z_{2}$ are linear combinations of i.i.d. standard normal variables, then $Z=VU$ where
\[V = \begin{bmatrix} -\dfrac{1}{n}&\cdots&-\dfrac{1}{n} &\dfrac{2n-b}{bn}&\dfrac{2n-b}{bn}&-\dfrac{1}{n}&\cdots&\cdots&-\dfrac{1}{n}  \\
\dfrac{2n-b}{bn}&\cdots&\dfrac{2n-b}{bn}&-\dfrac{1}{n}&\cdots&\cdots&\cdots&\cdots&-\dfrac{1}{n}\end{bmatrix}. \]
The joint distribution of $Z$ is 
\[\left[ \begin{matrix} Z_1 \\ Z_2 \end{matrix} \right] \sim N\left( \begin{bmatrix} 0\\ 0\end{bmatrix},\ \left[\begin{matrix}\dfrac{2n-b}{bn}& -\dfrac{1}{n}\\ -\dfrac{1}{n}& \dfrac{2n-b}{bn}\end{matrix}\right]\right)\] .
The conditional distribution of $Z_{1}|Z_{2}$ and the marginal distribution of $Z_{2}$ are
\[
Z_{1}|Z_{2}\sim N\left(\dfrac{b}{b-2n} Z_{2},\ \dfrac{4n-4b}{2bn-b^2}\right) \text{ and}
\]
\[
Z_{2}\sim N\left(0,\ \dfrac{2n-b}{bn}\right).
\]
Now we have the expectation
\begin{align*}
E[Z_{1}^{2}Z_{2}^{2}]&=E_{Z_{2}}[E_{Z_{1}|Z_{2}}[Z_{1}^{2}Z_{2}^{2}|Z_{2}]]\\
&=E_{Z_{2}}[Z_{2}^{2}E_{Z_{1}|Z_{2}}[Z_{1}^{2}|Z_{2}]]\\
&=E_{Z_{2}}\left[Z_{2}^{2}\left(\left(\dfrac{b}{b-2n}Z_{2}\right)^{2}+\dfrac{4n-4b}{2bn-b^2}\right)\right]\\
&=\left(\dfrac{b}{b-2n}\right)^{2}E_{Z_{2}}[Z_{2}^{4}]+\dfrac{4n-4b}{2bn-b^2}E_{Z_{2}}[Z_{2}^{2}]\\
&=\left(\dfrac{b}{b-2n}\right)^{2}3\left(\dfrac{2n-b}{bn}\right)^{2}+\dfrac{4n-4b}{2bn-b^2}\left(\dfrac{2n-b}{bn}\right)\\
&=\dfrac{3}{n^2}+\dfrac{4(n-b)}{b^2n}\\
&=\dfrac{3}{n^2}+\dfrac{4}{b^2}-\dfrac{4}{bn}.
\end{align*}
Then
\begin{align}\label{eq:Ea4_w}
E[a_4]&=\left(\dfrac{3}{n^2}+\dfrac{4}{b^2}-\dfrac{4}{bn}\right)\sum_{s=1}^{2a-1}\sum_{l=0}^{2a-1-s}1\nonumber\\
&=\left(\dfrac{3}{n^2}+\dfrac{4}{b^2}-\dfrac{4}{bn}\right)\dfrac{2a(2a-1)}{2}\nonumber\\
&=\dfrac{8a^2}{b^2}-\dfrac{12a}{b^2}+o\left(\dfrac{a}{b^2}\right).
\end{align}
Plug $E[a_3]$, $E[a_4]$ at \eqref{eq:Ea3_w}, \eqref{eq:Ea4_w} in \eqref{eq:A2_long_w},
\begin{align}\label{eq:A2_result_w}
A_2&=\dfrac{b^2}{(4a-2)^2}E[a_3+2a_4]\nonumber\\
&=\dfrac{b^2}{(4a-2)^2}\left[\dfrac{24a}{b^2}+\dfrac{16a^2}{b^2}-\dfrac{24a}{b^2}+o\left(\dfrac{a}{b^2}\right)\right]\nonumber\\
&=\dfrac{16a^2}{(4a-2)^2}+o\left(\dfrac{1}{a}\right).
\end{align}
To calculate $A_3$ at \eqref{A3_w}, consider $p=0,...(a-1)$. For $q=2p$ and $q=2p+1,$ let $c=1/2$ in \eqref{eq:z1z2_0}
\begin{align*}
OL&=E[(\bar{B}_p-\bar{B})^2(\bar{B}_q(b/2)-\bar{B})^2]\\
&=\dfrac{4}{b^2}+\dfrac{3}{n^2}-\dfrac{7}{bn}.
\end{align*}
For $q\neq 2p$ and $q\neq 2p+1$, let $c=1/2$ in \eqref{eq:z1z2_1}. 
\begin{align*}
NOL &=E[(\bar{B}_p-\bar{B})^2(\bar{B}_q(b/2)-\bar{B})^2]\\
&=\dfrac{2}{b^2}+\dfrac{3}{n^2}-\dfrac{3}{bn}.
\end{align*}
Then $A_3$ at \eqref{A3_w} equals to 
\begin{align}\label{eq:A3_result_w}
A_3 &=\dfrac{-2b^2}{(a-1)(2a-1)}a\cdot[2OL+(2a-2)NOL)]\nonumber\\
    &=\dfrac{-4ab^2}{(a-1)(2a-1)}\left[\left(\dfrac{4}{b^2}+\dfrac{3}{n^2}-\dfrac{7}{bn}\right)+(a-1)\left(\dfrac{3}{n^2}+\dfrac{2}{b^2}-\dfrac{3}{bn}\right)\right]\nonumber\\
    &=\dfrac{-4ab^2}{(a-1)(2a-1)}\left[\dfrac{2a}{b^2}-\dfrac{1}{b^2}+o\left(\dfrac{1}{b^2}\right)\right]\nonumber\\
    &=\dfrac{4a}{1-a}+o\left(\dfrac{1}{a}\right).
\end{align}
Combine $A_1\ A_2\ A_3$ at \eqref{eq:A1_result_w},\eqref{eq:A2_result_w} and \eqref{eq:A3_result_w}, we can approach $E[\tilde{\sigma}^4_{w}]$ at \eqref{eq:A1+A2+A3_w} by
\begin{align*}
E[\tilde{\sigma}^4_{w}]&=A_1+A_2+A_3\\
&=\dfrac{4a^2}{(a-1)^2}+\dfrac{16a^2}{(4a-2)^2}-\dfrac{4a}{a-1}+o\left(\dfrac{1}{a}\right)\\
&=\dfrac{4a^4+8a^3-12a^2+4a}{(a-1)^2(2a-1)^2}+o\left(\dfrac{1}{a}\right).
\end{align*}
Next consider 
\begin{align*}
E[\tilde{\sigma}^2_{w}]&=E\left[\dfrac{2b}{a-1}\sum_{l=0}^{a-1}(\bar{B}_l^2-\bar{B})^2-\dfrac{b}{4a-2}\sum_{l=0}^{2a-1}(\bar{B}_l(b/2)-\bar{B})^2\right]\\
&=a\cdot\dfrac{2b}{a-1}\dfrac{n-b}{bn}-2a\cdot\dfrac{b}{4a-2}\dfrac{2n-b}{bn}=1.
\end{align*}
Therefore
\begin{align*}
\Var[\tilde{\sigma}^2_{w}]&=E[\tilde{\sigma}^4_{w}]-(E[\tilde{\sigma}^2_{w}])^2\\
&=\dfrac{4a^4+8a^3-12a^2+4a-(a-1)^2(2a-1)^2}{(a-1)^2(2a-1)^2}\\
&=\dfrac{20a^3+o(a^3)}{4a^4}\\
&=\dfrac{5b}{n}+o\left(\dfrac{b}{n}\right).
\end{align*}
\end{proof}

\begin{lemma}\label{lemma:Esq0}
Let $\hat{\sigma}^2_n$ be either $\hat{\sigma}^2_w$ or $\hat{\sigma}^2_f$. Assume Assumption \ref{ass:batch} holds. Further suppose Assumption \ref{ass:sip} holds with $ED^4<\infty$,  $E_Fg^4<\infty$ and as $n\rightarrow\infty$,
\begin{equation}\label{eq:th2}
\psi^2(n)b^{-1} \log n \rightarrow 0.
\end{equation}
Then 
\begin{equation}\label{eq:Esq0}
E[(\hat{\sigma}^2_n-\sigma^2\tilde{\sigma}^2_n)^2]\rightarrow\ 0\  \text{as}\ n\rightarrow\infty.
\end{equation}
\end{lemma}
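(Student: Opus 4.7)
The plan is to reduce Lemma~\ref{lemma:Esq0} to the analogous $L^{2}$-closeness between a single BM or Bartlett SV estimator and its Brownian counterpart at a single bandwidth, and then control that closeness by expanding the squared difference using the Strong Invariance Principle.

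First, by \eqref{eq:ft=bt}, $\hat{\sigma}^{2}_{f}$ decomposes as $2\hat{\sigma}^{2,(b)}_{\text{bt}} - \hat{\sigma}^{2,(b/2)}_{\text{bt}}$, where $\hat{\sigma}^{2,(k)}_{\text{bt}}$ denotes the Bartlett SV estimator with bandwidth $k$, and by \eqref{eq:WBMft=BM}, $\hat{\sigma}^{2}_{w} = 2\hat{\sigma}^{2,(b)}_{\text{bm}} - \hat{\sigma}^{2,(b/2)}_{\text{bm}}$. The Brownian counterparts $\tilde{\sigma}^{2}_{f}$ and $\tilde{\sigma}^{2}_{w}$ admit the same linear decomposition. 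Applying $(x-y)^{2} \le 2x^{2} + 2y^{2}$ twice reduces \eqref{eq:Esq0} to proving, for each choice of component estimator $\hat{\sigma}^{2,(k)}$ with Brownian analog $\tilde{\sigma}^{2,(k)}$ and bandwidth $k \in \{b,\, b/2\}$, that $E[(\hat{\sigma}^{2,(k)} - \sigma^{2}\tilde{\sigma}^{2,(k)})^{2}] \to 0$. Since Assumption~\ref{ass:batch} and the rate condition \eqref{eq:th2} are preserved when $b$ is replaced by $b/2$, it suffices to treat a generic bandwidth.

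For each component I would factor the $l$-th summand contributing to $\hat{\sigma}^{2,(k)} - \sigma^{2}\tilde{\sigma}^{2,(k)}$ as $\Delta_{l}\, S_{l}$ via the identity $u^{2} - v^{2} = (u-v)(u+v)$, with $\Delta_{l} = (\bar{Y}_{l}(k) - \bar{Y}) - \sigma(\bar{B}_{l}(k) - \bar{B})$ and $S_{l} = (\bar{Y}_{l}(k) - \bar{Y}) + \sigma(\bar{B}_{l}(k) - \bar{B})$. By Assumption~\ref{ass:sip} (in the univariate reduction $L=\sigma$), telescoping partial sums give $|\Delta_{l}| \le C D\psi(n)/k$ for large $n$, while $E_{F}g^{4} < \infty$ together with standard Gaussian fourth-moment bounds yields $E[S_{l}^{4}] = O(k^{-2})$. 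Applying Cauchy--Schwarz to the square of the summation, then once more inside each summand, and invoking $ED^{4} < \infty$, produces a bound of order $\psi^{2}(n)\,b^{-1}\log n$ for the total $E[(\hat{\sigma}^{2,(k)} - \sigma^{2}\tilde{\sigma}^{2,(k)})^{2}]$. The $\log n$ factor arises from Proposition~\ref{prop:sup} when controlling Brownian increments uniformly over the overlapping batches of the Bartlett SV component; the pure BM component actually gives a better rate without it. Either way, the bound tends to zero by \eqref{eq:th2}.

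The main obstacle is the careful bookkeeping of all cross terms produced by the squared-difference expansion, particularly for the Bartlett SV component whose overlapping structure introduces nontrivial batch-to-batch covariances. Each cross term demands a separate H\"older-type estimate combining the fourth moment of the SIP constant $D$, the fourth moment of $g$ under $F$, and either variance or supremum bounds for Brownian increments; one must then verify that the scaling prefactors ($k/(a_{k}-1)$ for BM, $k/n$ for Bartlett SV) compose with these per-term estimates so that the entire expression is governed by the single rate condition \eqref{eq:th2}. The argument for the BM component closely parallels Lemma~5 of \cite{fleg:jone:2010}, and the Bartlett SV piece follows the strategy used in \cite{vats:fleg:jone:2018}.
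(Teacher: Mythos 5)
Your proposal is correct in outline and shares the paper's key structural step: both arguments reduce the flat top estimators to linear combinations of two single-bandwidth components (two BM estimators for $\hat{\sigma}^2_w$ via \eqref{eq:WBMft=BM}, two Bartlett SV estimators for $\hat{\sigma}^2_f$ via \eqref{eq:ft=bt}), after which it suffices to prove the single-bandwidth $L^2$ convergence at bandwidths $b$ and $b/2$. Where you diverge is in how that single-bandwidth statement is established. The paper does not compute anything directly: it invokes Lemma B.4 of \cite{jone:hara:caff:neat:2006} to get almost sure convergence of $|\hat{\sigma}^2_{bm}-\sigma^2\tilde{\sigma}^2_{bm}|$ to zero under Assumption~\ref{ass:sip} and \eqref{eq:th2}, and then upgrades to convergence in $L^2$ by citing the uniform-integrability machinery of Lemmas 12--14 of \cite{fleg:jone:2010}, which is precisely where the moment assumptions $ED^4<\infty$ and $E_F g^4<\infty$ enter. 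You instead propose a direct second-moment bound: factor each summand as $\Delta_l S_l$ via $u^2-v^2=(u-v)(u+v)$, bound $|\Delta_l|$ by $CD\psi(n)/k$ from the strong invariance principle, bound $E[S_l^4]=O(k^{-2})$, and close with Cauchy--Schwarz. This is a legitimate and arguably more self-contained route; it makes the role of \eqref{eq:th2} explicit (your bookkeeping even shows the nonoverlapping BM component converges at rate $\psi^2(n)/b$ without the $\log n$ factor, which the paper's citation-based argument obscures). The one place you should be more careful is the claim $E[S_l^4]=O(k^{-2})$ for the non-Gaussian part: $E_F g^4<\infty$ alone does not control fourth moments of centered batch means of a dependent sequence, so you should derive it by writing $\bar{Y}_l(k)-\bar{Y}=\Delta_l+\sigma(\bar{B}_l(k)-\bar{B})$ and reusing the SIP bound on $\Delta_l$ together with \eqref{eq:th2} (which keeps $\psi^4(n)/k^4=O(k^{-2})$), rather than appealing to the moment assumption directly. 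With that repair your argument goes through and delivers the same conclusion by a more explicit computation than the paper's.
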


\begin{proof}
Lemma B.4 of \cite{jone:hara:caff:neat:2006} shows 
\begin{equation}\label{eq:abs0}
|\hat{\sigma}^2_{bm}-\sigma^2\tilde{\sigma}^2_{bm}|\rightarrow 0\ \text{a.s}\ \text{as}\ n\rightarrow\infty
\end{equation}
under a slightly different condition from \eqref{eq:th2} and geometric ergodicity. Following the same argument, it can be shown that each component of $|\hat{\sigma}^2_{bm}-\sigma^2\tilde{\sigma}^2_{bm}|$ goes to 0 under Assumption~\ref{ass:sip} and \eqref{eq:th2}. Hence \eqref{eq:abs0} also holds under conditions of Lemma \ref{lemma:Esq0}. It then follows from Lemmas 12, 13 and 14 of \cite{fleg:jone:2010} that 
\begin{equation}\label{eq:Esq0bm}
E[(\hat{\sigma}^2_{bm}-\sigma^2\tilde{\sigma}^2_{bm})^2]\rightarrow\ 0\  \text{as}\ n\rightarrow\infty,
\end{equation}
and
\begin{equation}\label{eq:Esq0obm}
E[(\hat{\sigma}^2_{obm}-\sigma^2\tilde{\sigma}^2_{obm})^2]\rightarrow\ 0\  \text{as}\ n\rightarrow\infty.
\end{equation}
From \eqref{eq:ft=bt}, $\hat{\sigma}^2_{f}$ can be expressed by a linear combination of two Bartlett SV estimators which are asymptotically equivalent to $\hat{\sigma}^2_{obm}$, and $\hat{\sigma}^2_{w}$ can be expressed as a linear combination of two BM estimators, \eqref{eq:Esq0bm} and \eqref{eq:Esq0obm} result in Lemma \ref{lemma:Esq0}.
\end{proof}

\section{Proof of Theorem~\ref{thm:var}} \label{proof:var}
Corollary~\ref{cor:lemmaf} and Lemma \ref{lemma:w} show
\[\text{Var}[\tilde{\sigma}^{2}_{f}]=\dfrac{8b}{3n}+o\left(\dfrac{b}{n}\right),\]
and
\[\text{Var}[\tilde{\sigma}^{2}_{w}]=\dfrac{5b}{n}+o\left(\dfrac{b}{n}\right).\]
To derive $\Var[\hat{\sigma}^2_w]/\Var[\hat{\sigma}^2_f]$, we use Lemma~\ref{lemma:Esq0} to show
\[\dfrac{n}{b}\text{Var}[\hat{\sigma}^2_{f}]=\dfrac{8}{3}\sigma^4+o(1),\]
and
\[\dfrac{n}{b}\text{Var}[\hat{\sigma}^2_{w}]=5\sigma^4+o(1).\]

We only show results for $\Var[\hat{\sigma}^2_w]$ as $\Var[\hat{\sigma}^2_f]$ follows a similar proof. Define
\[
\eta=Var[\hat{\sigma}^{2}_{w}-\sigma^{2}\tilde{\sigma}^{2}_{w}]+2\sigma^{2}\cdot E[(\hat{\sigma}^{2}_{w}-\sigma^{2}\tilde{\sigma}^{2}_{w})(\tilde{\sigma}^{2}_{w}-E\tilde{\sigma}^{2}_{w})].
\]
As in \cite{fleg:jone:2010}, we show that $\eta\rightarrow 0$ as $n\rightarrow\infty$ using Cauchy-Schwarz inequality, $Var[X]\leq EX^{2}$ and \eqref{eq:Esq0}.  Specifically, we have
\begin{align*}
|\eta|&=|Var[\hat{\sigma}^{2}_{w}-\sigma^{2}\tilde{\sigma}^{2}_{w}]+2\sigma^{2}\cdot E[(\hat{\sigma}^{2}_{w}-\sigma^{2}\tilde{\sigma}^{2}_{w})(\tilde{\sigma}^{2}_{w}-E\tilde{\sigma}^{2}_{w})]|\\
&\leq E[(\hat{\sigma}^{2}_{w}-\sigma^{2}\tilde{\sigma}^{2}_{w})^{2}]+2\sigma^{2}\cdot\sqrt{E[(\hat{\sigma}^{2}_{w}-\sigma^{2}\tilde{\sigma}^{2}_{w})^{2}]\cdot E[(\tilde{\sigma}^{2}_{w}-E\tilde{\sigma}^{2}_{w})^{2}]}\\
&=E[(\hat{\sigma}^{2}_{w}-\sigma^{2}\tilde{\sigma}^{2}_{w})^{2}]+2\sigma^{2}\cdot (E[(\hat{\sigma}^{2}_{w}-\sigma^{2}\tilde{\sigma}^{2}_{w})^{2}])^{1/2}\cdot (E[(\tilde{\sigma}^{2}_{w}-E\tilde{\sigma}^{2}_{w})^{2}])^{1/2}\\
&=E[(\hat{\sigma}^{2}_{w}-\sigma^{2}\tilde{\sigma}^{2}_{w})^{2}]+2\sigma^{2}\cdot (E[(\hat{\sigma}^{2}_{w}-\sigma^{2}\tilde{\sigma}^{2}_{w})^{2}])^{1/2}\cdot (\Var[\tilde{\sigma}^{2}_{w}])^{1/2} .
\end{align*}
Since $E[(\hat{\sigma}^{2}_{w}-\sigma^{2}\tilde{\sigma}^{2}_{w})^{2}]=o(1)$ from Lemma~\ref{lemma:Esq0}, 
\begin{align*}
|\eta|&\le E[(\hat{\sigma}^{2}_{w}-\sigma^{2}\tilde{\sigma}^{2}_{w})^{2}]+2\sigma^{2}\cdot (E[(\hat{\sigma}^{2}_{w}-\sigma^{2}\tilde{\sigma}^{2}_{w})^{2}])^{1/2}\cdot (\Var[\tilde{\sigma}^{2}_{w}])^{1/2}  \\
&=o(1)+2\sigma^{2}[o(1)]^{1/2}\left(\dfrac{b}{n}\right)^{1/2}(5+o(1))^{1/2}\\
&=o(1)+2\sigma^{2}\left(\dfrac{b}{n}\right)^{1/2}(o(1)(5+o(1)))^{1/2}\\
&=o(1) ,
\end{align*}
since $b/n\rightarrow 0$ as $n\rightarrow\infty$. Then
\begin{align*}
Var[\hat{\sigma}^{2}_{w}]&=E[\hat{\sigma}^{2}_{w}-E\hat{\sigma}^{2}_{w}]^{2}\notag\\
&=E[\hat{\sigma}^{2}_{w}-\sigma^{2}\tilde{\sigma}^{2}_{w}+\sigma^{2}\tilde{\sigma}^{2}_{w}-\sigma^{2}E\tilde{\sigma}^{2}_{w}-E\hat{\sigma}^{2}_{w}+\sigma^{2}E\tilde{\sigma}^{2}_{w}]^{2}\notag\\
&=E[(\hat{\sigma}^{2}_{w}-\sigma^{2}\tilde{\sigma}^{2}_{w})+\sigma^{2}(\tilde{\sigma}^{2}_{w}-E\tilde{\sigma}^{2}_{w})-(E\hat{\sigma}^{2}_{w}-\sigma^{2}E\tilde{\sigma}^{2}_{w})]^{2}\notag\\
&=E[((\hat{\sigma}^{2}_{w}-\sigma^{2}\tilde{\sigma}^{2}_{w})-E[\hat{\sigma}^{2}_{w}-\sigma^{2}\tilde{\sigma}^{2}_{w}])+\sigma^{2}(\tilde{\sigma}^{2}_{w}-E\tilde{\sigma}^{2}_{w})]^{2}\notag\\
&=E[(\hat{\sigma}^{2}_{w}-\sigma^{2}\tilde{\sigma}^{2}_{w})-E[\hat{\sigma}^{2}_{w}-\sigma^{2}\tilde{\sigma}^{2}_{w}]]^{2}+E[\sigma^{2}(\tilde{\sigma}^{2}_{w}-E\tilde{\sigma}^{2}_{w})]^{2}\notag\\
&\ \ \ \ +2\sigma^{2}\cdot E[((\hat{\sigma}^{2}_{w}-\sigma^{2}\tilde{\sigma}^{2}_{w})-E[\hat{\sigma}^{2}_{w}-\sigma^{2}\tilde{\sigma}^{2}_{w}])\cdot(\tilde{\sigma}^{2}_{w}-E\tilde{\sigma}^{2}_{w})]\notag\\
&=E[(\hat{\sigma}^{2}_{w}-\sigma^{2}\tilde{\sigma}^{2}_{w})-E[\hat{\sigma}^{2}_{w}-\sigma^{2}\tilde{\sigma}^{2}_{w}]]^{2}+2\sigma^{2}\cdot E[(\hat{\sigma}^{2}_{w}-\sigma^{2}\tilde{\sigma}^{2}_{w})\cdot (\tilde{\sigma}^{2}_{w}-E\tilde{\sigma}^{2}_{w})]\notag\\
&\ \ \ \ +\sigma^{4}Var(\tilde{\sigma}^{2}_{w})-2\sigma^{2}E[(E[\hat{\sigma}^{2}_{w}-\sigma^{2}\tilde{\sigma}^{2}_{w}])(\tilde{\sigma}^{2}_{w}-E\tilde{\sigma}^{2}_{w})]\notag\\
&=\eta+\sigma^{4}Var(\tilde{\sigma}^{2}_w)\notag\\
&=5\sigma^{4}\cdot\dfrac{b}{n}+o\left(\dfrac{b}{n}\right).
\end{align*}
Similarly we can show
\[\Var[\hat{\sigma}^2_{f}]=\dfrac{8}{3}\sigma^4\cdot\dfrac{b}{n}+o\left(\dfrac{b}{n}\right).\]
Therefore 
\[\Var[\hat{\sigma}^2_w]/\Var[\hat{\sigma}^2_f]=1.875.\]

\section{Tukey-Hanning calculation}\label{app:TH}
To show the Tukey-Hanning window at \eqref{eq:th} satisfies \eqref{eq:cond1}, first consider
\begin{align*}
&\sum_{k=1}^{b-2}k\Delta_2w_{n}(k)=\sum_{k=1}^{b-2}\dfrac{k}{2}\left[\cos\left(\dfrac{\pi(k-1)}{b}\right)+\cos\left(\dfrac{\pi(k+1)}{b}\right)-2\cos\left(\dfrac{\pi k}{b}\right)\right]\\
&=\dfrac{1}{2}\left[\sum_{k=0}^{b-3}(k+1)\cos\left(\dfrac{\pi k}{b}\right)+\sum_{k=2}^{b-1}(k-1)\cos\left(\dfrac{\pi k}{b}\right)-2\sum_{k=1}^{b-2}k \cos\left(\dfrac{\pi k}{b}\right)\right]\\
&=\dfrac{1}{2}\left[\sum_{k=0}^{b-3}(k+1)\cos\left(\dfrac{\pi k}{b}\right)-\sum_{k=1}^{b-2}k \cos\left(\dfrac{\pi k}{b}\right)\right]\\
&\ +\dfrac{1}{2}\left[\sum_{k=2}^{b-1}(k-1)\cos\left(\dfrac{\pi k}{b}\right)-\sum_{k=1}^{b-2}k \cos\left(\dfrac{\pi k}{b}\right)\right]\\
&=\dfrac{1}{2}\left[1+\sum_{k=1}^{b-3}(k+1)\cos\left(\dfrac{\pi k}{b}\right)-\sum_{k=1}^{b-3}k \cos\left(\dfrac{\pi k}{b}\right)-(b-2)\cos\left(\dfrac{\pi(b-2)}{b}\right)\right]\\
&\ +\dfrac{1}{2}\left[(b-2)\cos\left(\dfrac{\pi(b-1)}{b}\right)+\sum_{k=2}^{b-2}(k-1)\cos\left(\dfrac{\pi k}{b}\right)-\cos\left(\dfrac{\pi}{b}\right)-\sum_{k=2}^{b-2}k\cos\left(\dfrac{\pi k}{b}\right)\right]\\
&=\dfrac{1}{2}\left[1+\sum_{k=1}^{b-3}\cos\left(\dfrac{\pi k}{b}\right)-(b-2)\cos\dfrac{\pi(b-2)}{b}\right]\\
&\ +\dfrac{1}{2}\left[(b-2)\cos\left(\dfrac{\pi(b-1)}{b}\right)-\cos\left(\dfrac{\pi}{b}\right)-\sum_{k=2}^{b-2}\cos\left(\dfrac{\pi k}{b}\right)\right]\\
&=\dfrac{1}{2}\left[1+\cos\left(\dfrac{\pi}{b}\right)-(b-2)\cos\left(\dfrac{\pi(b-2)}{b}\right)\right]\\
&\ +\dfrac{1}{2}\left[(b-2)\cos\left(\dfrac{\pi(b-1)}{b}\right)-\cos\left(\dfrac{\pi}{b}\right)-\cos\left(\dfrac{\pi(b-2)}{b}\right)\right]\\
&=\dfrac{1}{2}\left[1-(b-1)\cos\left(\dfrac{\pi(b-2)}{b}\right)+(b-2)\cos\left(\dfrac{\pi(b-1)}{b}\right)\right].
\end{align*}
Therefore
\begin{align*}
\sum_{k=1}^{b}k\Delta_2w_{n}(k)&=(b-1)\Delta_2 w_{th}(b-1)+b\Delta_2 w_{th}(b)+\sum_{k=1}^{b-2}k\Delta_2w_{th}(k)\\
&=(b-1)\left[\dfrac{1}{2}\cos\left(\dfrac{\pi(b-2)}{b}\right)-\cos\left(\dfrac{\pi(b-1)}{b}\right)-\dfrac{1}{2}\right]\\
&\ \ \ +b\left[\dfrac{1}{2}+\dfrac{1}{2}\cos\left(\dfrac{\pi(b-1)}{b}\right)\right]+\sum_{k=1}^{b-2}k\Delta_2w_{th}(k)\\
&=\left[\dfrac{b-1}{2}\cos\left(\dfrac{\pi(b-2)}{b}\right)+\dfrac{1}{2}-\dfrac{b-2}{2}\cos\left(\dfrac{\pi(b-1)}{b}\right)\right]\\
&\ \ \ +\left[\dfrac{1}{2}-\dfrac{b-1}{2}\cos\left(\dfrac{\pi(b-2)}{b}\right)+\dfrac{b-2}{2}\cos\left(\dfrac{\pi(b-1)}{b}\right)\right]\\
&=1.
\end{align*}

\end{appendix}

\bibliographystyle{apalike}
\bibliography{ref}

\begin{thebibliography}{}

\bibitem[Alexopoulos et~al., 1997]{alex:fish:seila:1997}
Alexopoulos, C., Fishman, G.~S., and Seila, A.~F. (1997).
\newblock Computational experience with the batch means method.
\newblock In {\em Proceedings of the 29th conference on Winter simulation},
  pages 194--201. IEEE Computer Society.

\bibitem[Anderson, 1994]{ande:1994}
Anderson, T.~W. (1994).
\newblock {\em The Statistical Analysis of Time Series}.
\newblock Wiley-Interscience.

\bibitem[Bednorz and Latuszy{\' n}ski, 2007]{bedn:latu:2007}
Bednorz, W. and Latuszy{\' n}ski, K. (2007).
\newblock A few remarks on `{F}ixed-width output analysis for {M}arkov chain
  {M}onte {C}arlo' by {J}ones et al.
\newblock {\em Journal of the American Statistical Association},
  102:1485--1486.

\bibitem[Boone et~al., 2014]{boon:merr:krac:2014}
Boone, E.~L., Merrick, J.~R., and Krachey, M.~J. (2014).
\newblock A {H}ellinger distance approach to {MCMC} diagnostics.
\newblock {\em Journal of Statistical Computation and Simulation},
  84(4):833--849.

\bibitem[Chan and Yau, 2017]{chan2017automatic}
Chan, K.~W. and Yau, C.~Y. (2017).
\newblock Automatic optimal batch size selection for recursive estimators of
  time-average covariance matrix.
\newblock {\em Journal of the American Statistical Association},
  112(519):1076--1089.

\bibitem[Dai and Jones, 2017]{dai:jone:2017}
Dai, N. and Jones, G. (2017).
\newblock Multivariate initial sequence estimators in {M}arkov chain {M}onte
  {C}arlo.
\newblock {\em Journal of Multivariate Analysis}, 159:184--199.

\bibitem[Damerdji, 1987]{dame:1987}
Damerdji, H. (1987).
\newblock On strong consistency of the variance estimator.
\newblock In {\em Proceedings of the 19th conference on Winter simulation},
  pages 305--308. ACM.

\bibitem[Damerdji, 1991]{dame:1991}
Damerdji, H. (1991).
\newblock Strong consistency and other properties of the spectral variance
  estimator.
\newblock {\em Management Science}, 37:1424--1440.

\bibitem[Damerdji, 1994]{dame:1994}
Damerdji, H. (1994).
\newblock Strong consistency of the variance estimator in steady-state
  simulation output analysis.
\newblock {\em Mathematics of Operations Research}, 19:494--512.

\bibitem[Elith et~al., 2008]{elit:leat:2008}
Elith, J., Leathwick, J., and Hastie, T. (2008).
\newblock A working guide to boosted regression trees.
\newblock {\em Journal of Animal Ecology}, 77(4):802--813.

\bibitem[Finley et~al., 2007]{finl:bana:carl:2007}
Finley, A.~O., Banerjee, S., and Carlin, B.~P. (2007).
\newblock sp{B}ayes: an {R} package for univariate and multivariate
  hierarchical point-referenced spatial models.
\newblock {\em Journal of Statistical Software}, 19.

\bibitem[Flegal and Gong, 2015]{fleg:gong:2015}
Flegal, J.~M. and Gong, L. (2015).
\newblock Relative fixed-width stopping rules for {M}arkov chain {M}onte
  {C}arlo simulations.
\newblock {\em Statistica Sinica}, 25:655--676.

\bibitem[Flegal et~al., 2008]{fleg:hara:jone:2008}
Flegal, J.~M., Haran, M., and Jones, G.~L. (2008).
\newblock {M}arkov chain {M}onte {C}arlo: Can we trust the third significant
  figure?
\newblock {\em Statistical Science}, 23:250--260.

\bibitem[Flegal and Jones, 2010]{fleg:jone:2010}
Flegal, J.~M. and Jones, G.~L. (2010).
\newblock Batch means and spectral variance estimators in {M}arkov chain
  {M}onte {C}arlo.
\newblock {\em The Annals of Statistics}, 38:1034--1070.

\bibitem[Flegal and Jones, 2011]{fleg:jone:2011}
Flegal, J.~M. and Jones, G.~L. (2011).
\newblock Implementing {M}arkov chain {M}onte {C}arlo: Estimating with
  confidence.
\newblock In Brooks, S., Gelman, A., Jones, G., and Meng, X., editors, {\em
  Handbook of {M}arkov Chain {M}onte {C}arlo}, pages 175--197. Chapman \&
  Hall/CRC Press.

\bibitem[Gelfand et~al., 2005]{gelf:bane:game:2005}
Gelfand, A.~E., Banerjee, S., and Gamerman, D. (2005).
\newblock Spatial process modelling for univariate and multivariate dynamic
  spatial data.
\newblock {\em Environmetrics}, 16(5):465--479.

\bibitem[Geyer, 1992]{geye:1992}
Geyer, C.~J. (1992).
\newblock Practical {M}arkov chain {M}onte {C}arlo.
\newblock {\em Statistical science}, pages 473--483.

\bibitem[Glynn and Whitt, 1992]{glyn:whit:1992}
Glynn, P.~W. and Whitt, W. (1992).
\newblock The asymptotic validity of sequential stopping rules for stochastic
  simulations.
\newblock {\em The Annals of Applied Probability}, 2:180--198.

\bibitem[Gong and Flegal, 2016]{gong:fleg:2016}
Gong, L. and Flegal, J.~M. (2016).
\newblock A practical sequential stopping rule for high-dimensional {M}arkov
  chain {M}onte {C}arlo.
\newblock {\em Journal of Computational and Graphical Statistics}, 25:684--700.

\bibitem[Hijmans et~al., 2010]{hijm:2010}
Hijmans, R., Phillips, S., Leathwick, J., and Elith, J. (2010).
\newblock dismo: species distribution modeling. r package version 0.5-4.

\bibitem[Hobert et~al., 2002]{hobe:jone:pres:rose:2002}
Hobert, J.~P., Jones, G.~L., Presnell, B., and Rosenthal, J.~S. (2002).
\newblock On the applicability of regenerative simulation in {M}arkov chain
  {M}onte {C}arlo.
\newblock {\em Biometrika}, 89:731--743.

\bibitem[Jones et~al., 2006]{jone:hara:caff:neat:2006}
Jones, G.~L., Haran, M., Caffo, B.~S., and Neath, R. (2006).
\newblock Fixed-width output analysis for {M}arkov chain {M}onte {C}arlo.
\newblock {\em Journal of the American Statistical Association},
  101:1537--1547.

\bibitem[Jones and Hobert, 2001]{jone:hobe:2001}
Jones, G.~L. and Hobert, J.~P. (2001).
\newblock Honest exploration of intractable probability distributions via
  {M}arkov chain {M}onte {C}arlo.
\newblock {\em Statistical Science}, 16:312--334.

\bibitem[Kuelbs and Philipp, 1980]{kuel:phil:1980}
Kuelbs, J. and Philipp, W. (1980).
\newblock Almost sure invariance principles for partial sums of mixing
  {B}-valued random variables.
\newblock {\em The Annals of Probability}, 8:1003--1036.

\bibitem[Lahiri, 1999]{lahi:1999}
Lahiri, S.~N. (1999).
\newblock Theoretical comparisons of block bootstrap methods.
\newblock {\em The Annals of Statistics}, 27(1):386--404.

\bibitem[Leathwick et~al., 2008]{leat:elit:2008}
Leathwick, J., Elith, J., Chadderton, W., Rowe, D., and Hastie, T. (2008).
\newblock Dispersal, disturbance and the contrasting biogeographies of {N}ew
  {Z}ealand’s diadromous and non-diadromous fish species.
\newblock {\em Journal of Biogeography}, 35(8):1481--1497.

\bibitem[Liu, 2001]{liu:2001}
Liu, J.~S. (2001).
\newblock {\em Monte Carlo Strategies in Scientific Computing}.
\newblock Springer, New York.

\bibitem[Liu and Flegal, 2018]{liu:fleg:2018}
Liu, Y. and Flegal, J.~M. (2018).
\newblock Optimal mean squared error bandwidth for spectral variance estimators
  in {MCMC} simulations.
\newblock {\em ArXiv e-prints}.

\bibitem[Meketon and Schmeiser, 1984]{meke:schm:1984}
Meketon, M.~S. and Schmeiser, B. (1984).
\newblock Overlapping batch means: {S}omething for nothing?
\newblock In {\em WSC '84: Proceedings of the 16th conference on Winter
  simulation}, pages 226--230, Piscataway, NJ, USA. IEEE Press.

\bibitem[Meyn and Tweedie, 2009]{meyn:twee:2009}
Meyn, S. and Tweedie, R. (2009).
\newblock {\em Markov Chains and Stochastic Stability}, volume~2.
\newblock Cambridge University Press Cambridge.

\bibitem[Mykland et~al., 1995]{mykl:tier:yu:1995}
Mykland, P., Tierney, L., and Yu, B. (1995).
\newblock Regeneration in {M}arkov chain samplers.
\newblock {\em Journal of the American Statistical Association}, 90:233--241.

\bibitem[Politis, 2003]{poli:2003}
Politis, D.~N. (2003).
\newblock The impact of bootstrap methods on time series analysis.
\newblock {\em Statistical Science}, 18:219--230.

\bibitem[Politis and Romano, 1995]{poli:roma:1995}
Politis, D.~N. and Romano, J.~P. (1995).
\newblock Bias-corrected nonparametric spectral estimation.
\newblock {\em Journal of Time Series Analysis}, 16(1):67--103.

\bibitem[Politis and Romano, 1996]{poli:roma:1996}
Politis, D.~N. and Romano, J.~P. (1996).
\newblock On flat-top kernel spectral density estimators for homogeneous random
  fields.
\newblock {\em Journal of Statistical Planning and Inference}, 51(1):41--53.

\bibitem[Politis and Romano, 1999]{poli:roma:1999}
Politis, D.~N. and Romano, J.~P. (1999).
\newblock Multivariate density estimation with general flat-top kernels of
  infinite order.
\newblock {\em Journal of Multivariate Analysis}, 68(1):1--25.

\bibitem[Politis and White, 2004]{poli:whit:2004}
Politis, D.~N. and White, H. (2004).
\newblock Automatic block-length selection for the dependent bootstrap.
\newblock {\em Econometric Reviews}, 23(1):53--70.

\bibitem[Robert and Casella, 2004]{robe:case:2004}
Robert, C.~P. and Casella, G. (2004).
\newblock {\em Monte Carlo Statistical Methods}.
\newblock Springer, New York, second edition.

\bibitem[Roy and Evangelou, 2018]{roy2018selection}
Roy, V. and Evangelou, E. (2018).
\newblock Selection of proposal distributions for generalized importance
  sampling estimators.
\newblock {\em arXiv preprint arXiv:1805.00829}.

\bibitem[{Roy} et~al., 2018]{roy:tan:fleg:2018}
{Roy}, V., {Tan}, A., and {Flegal}, J.~M. (2018).
\newblock {Estimating standard errors for importance sampling estimators with
  multiple {M}arkov chains}.
\newblock {\em Statistica Sinica}, 28:1079--1101.

\bibitem[Seila, 1982]{seil:1982}
Seila, A.~F. (1982).
\newblock Multivariate estimation in regenerative simulation.
\newblock {\em Operations Research Letters}, 1:153--156.

\bibitem[Tj{\o}stheim, 1990]{tjostheim1990non}
Tj{\o}stheim, D. (1990).
\newblock Non-linear time series and {M}arkov chains.
\newblock {\em Advances in Applied Probability}, pages 587--611.

\bibitem[{Vats} et~al., 2015]{vats:fleg:jone:2015output}
{Vats}, D., {Flegal}, J.~M., and {Jones}, G.~L. (2015).
\newblock {Multivariate Output Analysis for {M}arkov chain {M}onte {C}arlo}.
\newblock {\em ArXiv e-prints}.

\bibitem[{Vats} et~al., 2018]{vats:fleg:jone:2018}
{Vats}, D., {Flegal}, J.~M., and {Jones}, G.~L. (2018).
\newblock {Strong Consistency of Multivariate Spectral Variance Estimators in
  {M}arkov Chain {M}onte {C}arlo}.
\newblock {\em Bernoulli}, 24:1860--1909.

\bibitem[Welch, 1987]{welc:1987}
Welch, P.~D. (1987).
\newblock On the relationship between batch means, overlapping means and
  spectral estimation.
\newblock In {\em WSC '87: Proceedings of the 19th conference on Winter
  simulation}, pages 320--323, New York, NY, USA. ACM.

\bibitem[Wu et~al., 2009]{wu2009recursive}
Wu, W.~B. et~al. (2009).
\newblock Recursive estimation of time-average variance constants.
\newblock {\em The Annals of Applied Probability}, 19(4):1529--1552.

\bibitem[Yau and Chan, 2016]{yau2016new}
Yau, C.~Y. and Chan, K.~W. (2016).
\newblock New recursive estimators of the time-average variance constant.
\newblock {\em Statistics and Computing}, 26(3):609--627.

\end{thebibliography}
\end{document}